\documentclass{article}

\usepackage[english]{babel}
\usepackage[ruled,vlined,linesnumbered]{algorithm2e} 
\usepackage{graphicx}
\usepackage{subcaption}
\usepackage{multirow,multicol}
\usepackage{amsthm}
 \usepackage[table]{xcolor} 
\usepackage[letterpaper,top=2cm,bottom=2cm,left=3cm,right=3cm,marginparwidth=1.75cm]{geometry}
\newtheorem{theorem}{Theorem}
\newtheorem{remark}{Remark}

\newtheorem{lemma}{Lemma}
\newtheorem{example}{Example}

\newtheorem{assumption}{Assumption}
\usepackage{amsmath}
\usepackage{booktabs}
\usepackage{mathtools}
\usepackage{amssymb}
\usepackage{bm}
\usepackage{graphicx}
\usepackage[colorlinks=true, allcolors=blue]{hyperref}
\usepackage{authblk}

\title{An alternating learning-based collocation method for solving inverse elliptic problems}
\author[1]{Zhizhong Kong\thanks{zhizhongkong@whu.edu.cn.}}
\author[1]{Jerry Zhijian Yang\thanks{zjyang.math@whu.edu.cn.}}
\author[2]{Cheng Yuan\thanks{yuancheng@whu.edu.cn.}}
\affil[1]{School of Mathematics and Statistics, Wuhan University, Wuhan, 430072, Hubei, China}
\affil[2]{School of Artificial Intelligence, Wuhan University, Wuhan, 430072, Hubei, China}
\date{}

\begin{document}
\maketitle

\begin{abstract}
We propose the Alternating Learning-Based Collocation (ALBC) method for solving inverse elliptic problems. Our approach employs sinusoidal shallow networks as adaptive basis generators. By alternately updating the state variable and the unknown parameter, we decompose the original nonconvex joint optimization problem into a sequence of tractable linear subproblems. This strategy effectively overcomes the fixed-basis limitations of classical collocation methods while avoiding the slow convergence typically encountered in deep learning approaches. Theoretically, we establish stability estimates and prove the convergence of the proposed algorithm. Numerical experiments on five benchmark problems demonstrate the efficacy of ALBC, which consistently outperforms the standard collocation method in accuracy. Furthermore, it achieves performance comparable to or better than that of physics-informed neural networks at a substantially lower computational cost. Finally, the method remains robust under noise levels of up to twenty percent.
\end{abstract}

\section{Introduction}

Inverse problems, which aim to infer unknown model parameters from indirect observational data, play a central role in scientific and engineering disciplines such as seismic wave inversion \cite{bui2013computational,he2021reparameterized,tarantola1984inversion}, biomedical imaging \cite{anastasio2007application,arridge1999optical,bardsley2020mcmc}, environmental monitoring \cite{atmadja2001state,iglesias2013ensemble,moghaddam2021inverse} and data assimilation \cite{ding2024nonlinear}. Classical numerical strategies for solving such problems rely on discretizing the underlying PDE and embedding it within an optimization or regularization framework. Mesh-based methods, such as the finite element method (FEM) \cite{bangerth2008framework} and spectral collocation \cite{norgren2005chebyshev}, provide mature theoretical foundations with rigorous convergence guarantees, but require explicit mesh generation and repeated forward PDE solves, resulting in substantial computational overhead that grows prohibitively with problem dimension. Meshless approaches, including radial basis function (RBF) collocation \cite{li2015local}, circumvent mesh generation and extend more naturally to higher dimensions, yet remain critically dependent on the choice of basis functions: an ill-suited basis can lead to spectral mismatch, ill-conditioned algebraic systems, or poor convergence of the reconstruction. Although regularization strategies such as Tikhonov regularization \cite{benning2018modern,tikhonov1963solution} and Total Variation minimization \cite{rudin1992nonlinear} can stabilize the inversion, the overall reconstruction quality remains fundamentally constrained by the expressiveness of the chosen basis. More critically, all these classical approaches require a priori specification of the approximation basis whose regularity must match the unknown solution, yet offer limited adaptivity for resolving localized features. This central limitation has motivated the exploration of neural network-based methods, which construct data-driven representations and exhibit inherent robustness to observational noise.

In recent years, owing to their significant robustness to observational noise~\cite{jin2022imaging,mishra2023estimates,zhang2023stability}, neural networks have been widely applied to the inverse problems of partial differential equations~\cite{bao2020numerical,ji2025potential,zhang2023solving} to address these challenges. Physics-Informed Machine Learning (PIML) stands out as a representative approach~\cite{gao2022physics,pang2019fpinns,raissi2019physics}, which treats the unknown coefficients as trainable parameters while enforcing physical laws, enabling simultaneous recovery of hidden parameters and solutions. For the reconstruction of non-constant coefficients in PDEs, a prevalent strategy involves approximating both the solution and the unknown coefficient function using two separate neural networks. These networks are coupled via a composite loss function that integrates a least-squares data-fitting term with a physics-informed constraint. This methodology has since been extensively adapted to a diverse array of inverse problems~\cite{duan2024current,duan2024recovering,lu2021physics}.

While PIML methods have demonstrated impressive numerical performance in prior studies, they face significant challenges: their theoretical analysis remains under-developed, and they often suffer from high computational overhead, typically requiring a vast number of training epochs to achieve high-precision solutions. Conversely, classical numerical methods benefit from rigorous theoretical foundations. Recent studies, such as \cite{weng2026deep}, have combined the collocation method with neural networks, effectively leveraging the strengths of both approaches to achieve excellent results in PDE problems with rigorous theoretical guarantees. Similarly, \cite{xiong2025deep} integrated the Finite Element Method with neural networks, yielding comparable success. These findings highlight the substantial potential of hybridizing traditional numerical methods with neural networks.

Motivated by this paradigm, we propose a hybrid framework that integrates the collocation method with neural networks to solve inverse problems. Fundamentally, we parameterize the basis functions of the collocation method using neural networks, optimizing their parameters via an alternating learning strategy. Compared with classical numerical approaches, our method inherits the inherent robustness to observational noise characteristic of neural networks and effectively circumvents the curse of dimensionality, making it well-suited for high-dimensional parameter spaces. In contrast to purely deep learning-based methods, our approach leverages an alternating iterative structure to guide the optimization process, thereby significantly accelerating convergence and enhancing computational efficiency, particularly during the early stages of training. In summary, the primary contributions of this work are three-fold:
\begin{itemize}
\item We introduce the Alternating Learning-Based Collocation (ALBC) method, a novel framework tailored for solving inverse elliptic problems.
\item We establish rigorous stability estimates and provide a comprehensive convergence analysis for the proposed ALBC framework.
\item Through extensive numerical experiments, we demonstrate the effectiveness and superiority of ALBC over both traditional collocation methods and several deep-learning baselines, achieving higher accuracy with lower computational cost.
\end{itemize}
The remainder of this paper is organized as follows. Section \ref{sec1_prelim} outlines the problem formulation and the collocation method employed for its solution. Section \ref{sec1_method} details the proposed framework. Section \ref{sec_analysis} establishes the theoretical guarantees, including stability and convergence analysis. In Section \ref{sec1_result}, we present numerical experiments that assess the performance of the proposed approach across a range of inverse elliptic problems. Finally, Section \ref{sec1_conclusion} summarizes our findings and discusses possible directions for future research.

\section{Preliminary}\label{sec1_prelim}

In this section, we define the target problems and the foundational optimization framework discussed in this paper. Section \ref{subsec:problem} outlines the formal problem setting, while Section \ref{subsec:collocation} reviews the standard collocation scheme for inverse problems, which serves as the basis for our proposed method.

\subsection{Problem setting}\label{subsec:problem}

Let $\Omega \subset \mathbb{R}^d$ be a bounded domain with a Lipschitz continuous boundary $\partial\Omega$. We consider a physical system governed by the following second-order elliptic boundary value problem:
\begin{equation}
\label{eq:elliptic_system}
\left\{
\begin{aligned}
    -\nabla \cdot (q(x) \nabla u(x)) + b(x)u(x) &= f(x), & \quad x &\in \Omega, \\
    \mathcal{B}u(x) &= g(x), & \quad x &\in \partial\Omega.
\end{aligned}
\right.
\end{equation}
Here, $q(x)$ represents the diffusion coefficient (or conductivity), assumed to be strictly positive to satisfy the ellipticity condition. The term $b(x) \ge 0$ denotes the reaction coefficient (or potential field), and $f(x)$ is the internal source term. The operator $\mathcal{B}$ specifies the boundary condition including Dirichlet, Neumann, with $g(x)$ representing the boundary data. In this work, the objective is to identify an unknown parameter field, generally denoted by $\lambda$, while assuming all other system parameters are given. Depending on the physical scenario, the target unknown $\lambda$ in this study may represent one of the following:
\begin{itemize}
    \item The diffusion coefficient $q(x)$ (e.g., conductivity imaging);
    \item The reaction coefficient $b(x)$ (e.g., potential reconstruction);
    \item The source term $f(x)$ (e.g., source identification problems).
\end{itemize}
To reconstruct the target field $\lambda$, we utilize observational data $\mathcal{L}_{obs}$ governed by the measurement equation:
\begin{equation}\label{eq:observation}
    \mathcal{L}_{obs}(\mathbf{x}) = \mathcal{O}_{l}(u)(\mathbf{x}) + \epsilon(\mathbf{x}), \quad \mathbf{x} \in \Omega,
\end{equation}
where $\mathcal{O}_l$ denotes a linear observation operator and $\epsilon$ is an additive noise term of level $\delta_{noise} \geq 0$. Notably, the method presented in this work can be readily extended to accommodate observation models that exhibit a bilinear or even nonlinear dependence on both the state $u$ and the parameter $\lambda$, as demonstrated by the Current Density Impedance Imaging (CDII) experiment in Section \ref{sec1_result}.

To facilitate the subsequent derivation, we define the residual operator as $$\mathcal{N}(u, \lambda)(x) = -\nabla \cdot (q(x) \nabla u(x)) + b(x)u(x) - f(x),$$where, depending on the specific inverse problem, exactly one of the functions $q$, $b$, or $f$ is identified with the unknown $\lambda$, while the remaining two are treated as known data.
Although $\mathcal{N}$ is nonlinear with respect to the pair $(u, \lambda)$, it possesses a favorable bi-linear structure. Specifically, we denote $\mathcal{N}_u(u):=-\nabla \cdot (q(x) \nabla u(x)) + b(x)u(x)$ as the linear operator acting on $u$ when $\lambda$ is fixed, and $\mathcal{N}_\lambda(\lambda)$ as the linear operator acting on $\lambda$ when $u$ is given (e.g., $\mathcal{N}_\lambda(\lambda) := \lambda(x)u(x)$ for potential identification). Crucially, both $\mathcal{N}_u$ and $\mathcal{N}_\lambda$ exhibit strict linearity with respect to their primary arguments, $u$ and $\lambda$, respectively.

\subsection{Collocation scheme for inverse problem}\label{subsec:collocation}
The collocation method provides a rigorous framework for discretizing infinite-dimensional operator equations by enforcing governing laws strongly at a set of nodal points. In contrast to weak-formulations that rely on integral projections, the collocation scheme directly minimizes the pointwise residuals of the differential operators. For the inverse problem defined in \eqref{eq:elliptic_system}, this method approximates the unknown state $u(x)$ and parameter $\lambda(x)$ within a finite-dimensional subspace $\mathcal{V}_{N} = \text{span}\{\phi_j\}_{j=1}^{N}$. Specifically, one can adopt a unified ansatz where both fields are expanded using the same set of basis functions $\{\phi_j\}_{j=1}^{N}$:
\begin{equation}
    u(x) \approx u_N(x) = \boldsymbol{\phi}(x)^\top \mathbf{c}, \qquad \lambda(x) \approx \lambda_N(x) = \boldsymbol{\phi}(x)^\top \mathbf{d},
\end{equation}
where $\boldsymbol{\phi}(x) = [\phi_1(x), \dots, \phi_N(x)]^\top$ is the basis vector, $\mathbf{c}, \mathbf{d} \in \mathbb{R}^{N}$ are the coefficient vectors to be determined. 

To formulate the discrete system, we define a total of $M$ collocation points, partitioned into interior points $\mathcal{X}_{int} = \{x_i\}_{i=1}^{M_{int}} \subset \Omega$, boundary points $\mathcal{X}_{bdy} = \{x_b\}_{b=1}^{M_{bdy}} \subset \partial\Omega$, and observation points $\mathcal{X}_{obs}=\{x_o\}_{o=1}^{M_{obs}}\subset \Omega$, such that $M = M_{int} + M_{bdy} + M_{obs}$. Evaluating the state $u$ and parameter $\lambda$ at these collocation points and substituting them into the governing equations \eqref{eq:elliptic_system} yields the following interior physical residual vector $\mathbf{r}_{\Omega} \in \mathbb{R}^{M_{int}}$ and the boundary residual vector $\mathbf{r}_{\mathcal{B}} \in \mathbb{R}^{M_{bdy}}$:

$$[\mathbf{r}_{\Omega}(\mathbf{c},\mathbf{d})]_i := \mathcal{N}\left( \boldsymbol{\phi}^\top \mathbf{c} \, , \, \boldsymbol{\phi}^\top \mathbf{d} \right)(x_i), \quad x_i \in \mathcal{X}_{int},$$

$$[\mathbf{r}_{\mathcal{B}}(\mathbf{c})]_b := \mathcal{B}\left( \boldsymbol{\phi}^\top \mathbf{c} \right)(x_b) - g(x_b), \quad x_b \in \mathcal{X}_{bdy}.$$

Given the bilinearity of $\mathcal{N}$, the coupled algebraic system $\mathbf{r}_{\Omega}$ renders the inverse problem inherently nonlinear. A common approach to addressing this is to simultaneously optimize the independent variables $(\mathbf{c}, \mathbf{d}) \in \mathbb{R}^{2N}$ by relaxing the governing equations into soft penalty terms \cite{leeuwen2016penalty}:
\begin{equation}\label{eq:joint_opt_cd}
    (\mathbf{c}^*, \mathbf{d}^*) = \operatorname*{argmin}_{\mathbf{c}, \mathbf{d}} \left\{ \frac{w_r}{M_{int}} \| \mathbf{r}_{\Omega}(\mathbf{c}, \mathbf{d}) \|_2^2 + \frac{w_b}{M_{bdy}} \| \mathbf{r}_{\mathcal{B}}(\mathbf{c}) \|_2^2 + \frac{w_d}{M_{obs}} \| \mathcal{O}_l(\boldsymbol{\phi}^\top \mathbf{c}) - \mathcal{L}_{obs} \|_2^2 + \gamma \mathcal{R}_{reg}(\mathbf{d}) \right\}.
\end{equation}
Here, $w_r$, $w_b$, and $w_d$ are positive penalty weights balancing the physics residual, boundary condition, and data fidelity terms, respectively. $\|\cdot\|_2$ denotes the discrete $l_2$ norm over the corresponding set of collection points. The term $\mathcal{R}_{reg}(\mathbf{d})$ denotes a regularization functional that promotes smoothness or sparsity of the reconstructed parameter.

Alternatively, by enforcing strict physical constraints $\mathbf{r}_{\Omega}(\mathbf{c}, \mathbf{d}) = \mathbf{0}$ and $\mathbf{r}_{\mathcal{B}}(\mathbf{c}) = \mathbf{0}$, the state coefficients are reduced to an implicit function of the parameters, establishing the mapping $\mathbf{d} \mapsto \mathbf{c}(\mathbf{d})$. This condenses the optimization to a subspace $\mathbb{R}^N$, yielding the reduced objective function \cite{li2015local,norgren2005chebyshev}:

$$\mathbf{d}^* = \operatorname*{argmin}_{\mathbf{d}} \left\{ \frac{w_d}{M_{obs}} \| \mathcal{O}_l(\boldsymbol{\phi}^\top \mathbf{c}(\mathbf{d})) - \mathcal{L}_{obs} \|_2^2 + \gamma \mathcal{R}_{reg}(\mathbf{d}) \right\}.$$
While the former circumvents iterative forward PDE solutions at the cost of exacerbated non-convexity, the latter guarantees exact physical fidelity but necessitates full numerical inversions and adjoint-based gradient evaluations at each iteration.

In practice, the implementation of this framework faces a critical bottleneck regarding the determination of an optimal basis set. As previously noted, selecting an appropriate basis is a non-trivial task. The formulation of the inverse problem provides limited prior guidance, primarily because the regularity of the unknown parameter field is not pre-defined. Consequently, an inappropriate choice of basis inevitably leads to severe numerical instabilities:

\begin{enumerate}
    \item \textbf{Spectral mismatch and Gibbs phenomenon.} Using globally smooth basis functions to approximate a parameter field with local abrupt changes or discontinuities results in spurious high-frequency oscillations known as the Gibbs phenomenon \cite{gottlieb1997gibbs}, which significantly degrades reconstruction accuracy.
    \item \textbf{Ill-conditioning}: An improper basis can lead to a rapidly growing condition number of the resulting linear system (or differentiation matrices) \cite{kansa2017ill}, causing numerical instability where small errors in data or floating-point arithmetic are catastrophically amplified.
\end{enumerate}

To overcome these limitations, we propose a learning-based collocation method. By leveraging neural networks to construct adaptive basis functions, this framework effectively mitigates the aforementioned numerical pathologies and demonstrates high fidelity in solving complex inverse problems.

\section{Methodology}\label{sec1_method}

In this section, we present the \textbf{A}lternating \textbf{L}earning-\textbf{B}ased \textbf{C}ollocation (ALBC) method designed for inverse elliptic problems. While traditional collocation schemes typically rely on static, pre-defined basis functions, our approach leverages shallow neural networks to construct basis sets dynamically. This flexibility enables the alternating generation of bases for coupled state and parameter fields, offering two distinct advantages. First, it allows for the adaptive and incremental expansion of the basis dimensionality. Second, it ensures that the basis functions inherently capture the structural characteristics of the underlying physical problem. As a result, ALBC exhibits superior generalizability and robustness.

The remainder of this section is organized as follows. Section \ref{sec_main} provides a high-level overview of the proposed framework and presents the core algorithm. Subsequent sections detail the technical specifications. Specifically, Section \ref{sec_basis} describes the construction and training protocols for the basis functions. Section \ref{sec_ini} outlines the initialization strategy and Section \ref{sec_sampling} introduces the adaptive sampling method.

\begin{algorithm}[ht!]
\SetAlgoLined
\DontPrintSemicolon

\SetKwComment{Comment}{}{} 

\caption{Alternating Learning-Based Collocation (ALBC) with Periodic Fine-tuning}
\label{main_algo}

%
%
%
%
%
%
%

\KwIn{Target basis count $s$, network widths $\{n_k\}_{k=1}^s$, fine-tuning period $T$}
\KwOut{Final approximations $u_{s}$ and $\lambda_{s}$}

\BlankLine
Initialize empty basis sets: $\mathcal{U} \gets \emptyset$, $\mathcal{P} \gets \emptyset$\;
Initialize approximations: $u \gets 0$, $\lambda \gets 0$\;

\BlankLine
\For{$k \gets 1$ \KwTo $s$}{
    \BlankLine
    \Comment{\# Step 1: Update basis set $\mathcal{U}$ and state variable $u$}
    Initialize the $k$-th basis $\phi_k$ of width $n_k$ as described in Section~\ref{sec_ini}\;
    Sample collocation points according to Section~\ref{sec_sampling}\;
    Train the basis $\phi_k$ following Section~\ref{sec:update_u}\;
    Append to basis set: $\mathcal{U} \gets \mathcal{U} \cup \{\phi_k\}$\;
    Update state variable: $u \gets u + \phi_k$\;
    
    \BlankLine
    \Comment{\# Step 2: Update basis set $\mathcal{P}$ and parameter field $\lambda$}
    Initialize the $k$-th basis $\psi_k$ of width $n_k$ as described in Section~\ref{sec_ini}\;
    Sample collocation points according to Section~\ref{sec_sampling}\;
    Train the basis $\psi_k$ following Section~\ref{sec:update_lambda}\;
    Append to basis set: $\mathcal{P} \gets \mathcal{P} \cup \{\psi_k\}$\;
    Update parameter field: $\lambda \gets \lambda + \psi_k$\;

    \BlankLine
    \Comment{\# Step 3: Update global coefficients $(\mathbf{c},\mathbf{d})$}
    Update $(\mathbf{c},\mathbf{d})$ by solving the joint optimization problem \eqref{eq:joint_opt_cd}\;

    \BlankLine
    \Comment{\# Step 4: Periodic joint fine-tuning}
    \If{$k \pmod{T} = 0$}{
        Jointly fine-tune $(\mathbf{c},\mathbf{d})$ and all parameters in $\mathcal{U}$ and $\mathcal{P}$ per Section~\ref{sec_basis}\;
    }
}

\BlankLine
\Return{$u_{s}$, $\lambda_{s}$}

\end{algorithm}

\subsection{The Alternating Learning-based Collocation method}\label{sec_main}
Unlike the standard collocation framework in Subsection \ref{subsec:collocation}, which uses a single shared basis for both $u$ and $\lambda$, we employ two distinct basis sets $\mathcal{U}$ and $\mathcal{P}$, parameterized by separate shallow neural networks. This separation is motivated by the fact that $u$ and $\lambda$ may exhibit fundamentally different regularity and spectral characteristics, and constructing dedicated bases allows each to be tailored to its respective target function. Given that the observational data is restricted to $u$, we implement an alternating generation strategy starting with the state variable. As delineated in Algorithm \ref{main_algo}, the iterative procedure commences with empty basis sets and sequentially expands them up to a target dimension $s$.

At the $k$-th iteration, a new neural network basis function $\phi_k$ with width $n_k$ is initialized and trained to minimize the data-driven loss; this function is subsequently used to enrich the state basis set $\mathcal{U}$. Following this, a corresponding basis function $\psi_k$ for the parameter field $\lambda$ is introduced and optimized by penalizing the PDE residual in conjunction with necessary regularization, before being incorporated into $\mathcal{P}$. Based on these augmented basis sets, the current approximations $u_k$ and $\lambda_k$ are updated via the collocation method. To mitigate error propagation and maintain global physical consistency during the sequential expansion, a joint optimization is executed every $T$ stages to simultaneously re-calibrate all accumulated basis functions in $\mathcal{U}$ and $\mathcal{P}$. This alternating procedure continues until the basis dimensionality reaches a predefined threshold $s$, culminating in a final collocation update to yield the optimal approximations $u_s$ and $\lambda_s$.

\subsection{Basis construction and training procedure}\label{sec_basis}

Constructing effective basis functions requires a parameterization that is both easily trainable and highly expressive. The Single-Hidden-Layer Neural Network (SHLNN) satisfies these requirements, offering a theoretically guaranteed universal approximation at a minimal computational cost \cite{chen2022bridging,cybenko1989approximation}. In this work, we utilize SHLNNs with sinusoidal activations to emulate the principle of Fourier series expansion. This framework allows the network to adaptively learn various frequency components, making it superior to ReLU or Tanh for capturing high-frequency oscillations.

Formally, we parameterize the basis functions for the state $u$ and the parameter $\lambda$ using neural networks as follows:

\begin{subequations} \label{eq:basis_networks}
\begin{align}
    \phi_k(\mathbf{x}; \theta_u^{(k)}) &= \sum_{j=1}^{n_k} a_{k,j}^u \phi_{k,j} = \sum_{j=1}^{n_k} a_{k,j}^u \sin(\boldsymbol{\omega}_{k,j}^u \cdot \mathbf{x} + b_{k,j}^u), \label{eq:basis_u} \\
    \psi_k(\mathbf{x}; \theta_\lambda^{(k)}) &= \sum_{j=1}^{n_k} a_{k,j}^{\lambda} \psi_{k,j} = \sum_{j=1}^{n_k} a_{k,j}^\lambda \sin(\boldsymbol{\omega}_{k,j}^\lambda \cdot \mathbf{x} + b_{k,j}^\lambda). \label{eq:basis_lambda}
\end{align}
\end{subequations}
where $n_k$ denotes the width of the $k$-th basis network, $\boldsymbol{\omega}_{k,j}^u, \boldsymbol{\omega}_{k,j}^\lambda \in \mathbb{R}^d$ represent the frequency vectors of the $j$-th neuron in the respective basis networks, and $\theta_u^{(k)} = \{a_{k,j}^u, \boldsymbol{\omega}_{k,j}^u, b_{k,j}^u\}_{j=1}^{n_k}$ (resp.\ $\theta_\lambda^{(k)}$) encompasses the trainable parameters. To express this in a more compact form, we define the coefficient column vector $\mathbf{a}^{u}_k := [a_{k,1}^{u}, \dots, a_{k,n_k}^{u}]^T$ and the corresponding row vector of neuron outputs:$$\boldsymbol{\phi}^{u}_k(\mathbf{x}) := \big[ \sin(\boldsymbol{\omega}_{k,1}^{u} \cdot \mathbf{x} + b_{k,1}^{u}), \dots, \sin(\boldsymbol{\omega}_{k,n_k}^{u} \cdot \mathbf{x} + b_{k,n_k}^{u}) \big].$$By applying analogous definitions for $\mathbf{a}^{\lambda}_k$ and $\boldsymbol{\psi}^{\lambda}_k(\mathbf{x})$, the parameterizations can be succinctly rewritten as $\phi_k(\mathbf{x}; \theta_u^{(k)}) = \boldsymbol{\phi}^{u}_k \mathbf{a}^{u}_k$ and $\psi_k(\mathbf{x}; \theta_\lambda^{(k)}) = \boldsymbol{\psi}^{\lambda}_k \mathbf{a}^{\lambda}_k$.

We now detail the optimization strategy. As outlined in Algorithm \ref{main_algo}, the training procedure is structured into two primary phases: (i) the sequential optimization of new basis functions to fit the current residuals (Lines 6 and 11), and (ii) update of $(\mathbf{c},\mathbf{d})$ by collection method (Line 14) and a joint fine-tuning phase where all accumulated model parameters are optimized simultaneously (Line 16). This strategy enforces a tight coupling between the variables, reinforcing their structural interdependencies and maximizing reconstruction accuracy.

\subsubsection{Update of \texorpdfstring{$\mathcal{U}$}{U} and \texorpdfstring{$u$}{u}}\label{sec:update_u}

Let $u_{k-1}$ and $\lambda_{k-1}$ denote the current global approximations. The localized target residuals during the $k$-th stage are defined across their respective domains as:

$$r^{(k)}_{data}(\mathbf{x}) := \mathcal{L}_{obs}(\mathbf{x}) - \mathcal{O}_{l}(u_{k-1})(\mathbf{x}),$$

$$r^{(k)}_{bdy}(\mathbf{x}) := g(\mathbf{x}) - \mathcal{B}(u_{k-1})(\mathbf{x}),$$

$$r^{(k)}_{phy}(\mathbf{x}) := -\mathcal{N}(u_{k-1},\lambda_{k-1})(\mathbf{x}).$$

The pointwise evaluations of these residual functions at their respective collocation sets are assembled into residual vectors:
$$\mathbf{r}^{(k)}_{data} := \bigl[r^{(k)}_{data}(\mathbf{x}_1),\dots,r^{(k)}_{data}(\mathbf{x}_{M_{obs}})\bigr]^\top \in \mathbb{R}^{M_{obs}},$$
and analogously $\mathbf{r}^{(k)}_{bdy} \in \mathbb{R}^{M_{bdy}}$ and $\mathbf{r}^{(k)}_{phy} \in \mathbb{R}^{M_{int}}$.

With the previous approximations fixed, we construct the $k$-th basis function $\phi_k(\mathbf{x}; \theta_u^{(k)})$ by minimizing the current approximation residuals. Exploiting the linearity of the operators ($\mathcal{O}_{l}$, $\mathcal{B}$, and $\mathcal{N}_u$) with respect to $u$, we formulate a composite loss function that penalizes discrepancies in the observational data, boundary conditions, and physical governing equations:
\begin{equation}\label{u_unnorm}
\begin{aligned}
    \mathcal{R}^{(k)}_u(\theta_u^{(k)}) 
    &= \frac{1}{M_{obs}}\sum_{\mathbf{x} \in \mathcal{X}_{obs}} \left| \mathcal{O}_{l}(\phi_k)(\mathbf{x}) - r^{(k)}_{data}(\mathbf{x}) \right|^2 \\
    &\quad + \frac{\eta_1}{M_{bdy}} \sum_{\mathbf{x} \in \mathcal{X}_{bdy}} \left| \mathcal{B}(\phi_k)(\mathbf{x}) - r^{(k)}_{bdy}(\mathbf{x}) \right|^2 \\
    &\quad + \frac{\eta_2}{M_{int}} \sum_{\mathbf{x} \in \mathcal{X}_{int}} \left| \mathcal{N}_u(\phi_k)(\mathbf{x}) - r^{(k)}_{phy}(\mathbf{x}) \right|^2 \\
    &\coloneqq \mathcal{R}_{data}^{(k)} + \eta_1 \mathcal{R}_{bdy}^{(k)} + \eta_2 \mathcal{R}_{phy}^{(k)}.
\end{aligned}
\end{equation}
Here, $\eta_1 > 0$ and $\eta_2 > 0$ are positive penalty weights that balance the boundary and physics residuals relative to the data term, respectively.

\subsubsection{Update of \texorpdfstring{$\mathcal{P}$}{P} and \texorpdfstring{$\lambda$}{lambda}}\label{sec:update_lambda}
Once $u_k$ is obtained, we update the parameter field $\lambda$ and the corresponding basis set $\mathcal{P}$. Exploiting the linearity of the differential operator $\mathcal{N}_{\lambda}$ with respect to $\lambda$, we construct the $k$-th basis function $\psi_k(\mathbf{x}; \theta_\lambda^{(k)})$ by minimizing the  physical residuals:
\begin{equation}\label{lambda_unnorm}
\mathcal{R}^{(k)}_\lambda(\theta_\lambda^{(k)}) = \frac{1}{M_{int}}\sum_{\mathbf{x} \in \mathcal{X}_{int}} \left| \mathcal{N}_\lambda(\psi_k)(\mathbf{x}) - r^{(k)}_{\lambda}(\mathbf{x}) \right|^2 + \gamma_\lambda \sum_{j=1}^{n_k} \Vert\boldsymbol{\omega}_{k,j}^{\lambda}\Vert_2 := \mathcal{R}_{phy,\lambda}^{(k)} + \gamma_\lambda \mathcal{R}_{reg,\lambda}^{(k)},
\end{equation}
where $r^{(k)}_{\lambda}$ is the intermediate physical residual defined as:
\begin{equation}
    r^{(k)}_{\lambda}(\mathbf{x}) \mathrel{\mathop:}= -\mathcal{N}(u_{k}, \lambda_{k-1})(\mathbf{x}),
\end{equation}
and $\gamma_\lambda$ is the penalty coefficient. This group Lasso regularization acts as a low-pass filter, suppressing spurious oscillations and biasing the network toward smoother, physically plausible structures. 

After refining the basis function sets $\mathcal{U}$ and $\mathcal{P}$, we hold the basis parameters constant. The global coefficients $(\mathbf{c},\mathbf{d})$ are then computed using the standard collocation method for \eqref{eq:joint_opt_cd}. For the inverse source problem, this yields a linear system. However, the potential and diffusion coefficient identification problems necessitate solving a nonlinear optimization problem. To circumvent this, we introduce the scheme detailed in Algorithm \ref{main_algo_nonlinear}. We decouple the original nonlinear problem \eqref{eq:joint_opt_cd} into two least-squares problems. By fixing one set of coefficients as constants, each subproblem can be efficiently solved as a linear system.

\begin{algorithm}[ht!]
\SetAlgoLined
\DontPrintSemicolon

\SetKwComment{Comment}{}{} 

\caption{ALBC for inverse potential/diffussion coefficient problem}
\label{main_algo_nonlinear}

\KwIn{Target basis count $s$, network widths $\{n_k\}_{k=1}^s$, fine-tuning period $T$}
\KwOut{Final approximations $u_{s}$ and $\lambda_{s}$}

\BlankLine
Initialize empty basis sets: $\mathcal{U} \gets \emptyset$, $\mathcal{P} \gets \emptyset$\;
Initialize approximations: $u \gets 0$, $\lambda \gets 0$\;

\BlankLine
\For{$k \gets 1$ \KwTo $s$}{
    \BlankLine
    \Comment{\# Step 1: Update basis set $\mathcal{U}$ and state variable $u$}
    Initialize the $k$-th basis $\phi_k$ of width $n_k$ as described in Section~\ref{sec_ini}\;
    Sample collocation points according to Section~\ref{sec_sampling}\;
    Train the basis $\phi_k$ following Section~\ref{sec:update_u}\;
    Append to basis set: $\mathcal{U} \gets \mathcal{U} \cup \{\phi_k\}$\;
    Update state variable: $u \gets u + \phi_k$\;
    Update coefficient $\mathbf{c}$ by solving \eqref{eq:joint_opt_cd} as a least square problem with $\mathbf{d} =\mathbf{d}_{k-1}$\;
    
    \BlankLine
    \Comment{\# Step 2: Update basis set $\mathcal{P}$ and parameter field $\lambda$}
    Initialize the $k$-th basis $\psi_k$ of width $n_k$ as described in Section~\ref{sec_ini}\;
    Sample collocation points according to Section~\ref{sec_sampling}\;
    Train the basis $\psi_k$ following Section~\ref{sec:update_lambda}\;
    Append to basis set: $\mathcal{P} \gets \mathcal{P} \cup \{\psi_k\}$\;
    Update parameter field: $\lambda \gets \lambda + \psi_k$\;
    Update coefficient $\mathbf{d}$ by solving \eqref{eq:joint_opt_cd} as a least square problem with $\mathbf{c} =\mathbf{c}_{k}$\;

    \BlankLine
    \Comment{\# Step 3: Periodic joint fine-tuning}
    \If{$k \pmod{T} = 0$}{
        Jointly fine-tune $(\mathbf{c},\mathbf{d})$ and all parameters in $\mathcal{U}$ and $\mathcal{P}$ per Section~\ref{sec_basis}\;
    }
}

\BlankLine
\Return{$u_{s}$, $\lambda_{s}$}

\end{algorithm}

\subsubsection{Periodic joint fine-tuning}
The alternating training procedure introduced in the previous subsections is fundamentally a greedy strategy \cite{weng2026deep,xu2025randomized}. Under this framework, previously established basis functions remain frozen; the updated approximation is obtained merely by superimposing the newly generated basis. While computationally efficient, this sequential construction suffers from myopic optimization. To overcome this, we update all trainable parameters $\Theta = \{c_i, d_i, \theta_u^{(i)}, \theta_\lambda^{(i)}\}_{i=1}^{k}$ periodically by minimizing the comprehensive loss defined as:

\begin{equation}
\begin{split}
\mathcal{R}_{total}(\Theta) &= \frac{w_r}{M_{int}} \sum_{\mathbf{x} \in \mathcal{X}_{int}} \left| \mathcal{N}(u_k, \lambda_k)(\mathbf{x}) \right|^2 \\
&\quad + \frac{w_b}{M_{bdy}} \sum_{\mathbf{x} \in \mathcal{X}_{bdy}} \left| \mathcal{B}(u_k)(\mathbf{x}) - g(\mathbf{x}) \right|^2 \\
&\quad + \frac{w_d}{M_{obs}} \sum_{\mathbf{x} \in \mathcal{X}_{obs}} \left| \mathcal{O}_l(u_k)(\mathbf{x}) - \mathcal{L}_{obs}(\mathbf{x}) \right|^2 \\
&\quad + \mathcal{R}_{freq}(\Theta),
\end{split}
\end{equation}
where $w_r, w_b, w_d \ge 0$ are penalty weights, and the frequency regularization term $\mathcal{R}_{freq}(\Theta)$ enforces smoothness and prevents overfitting for both networks across all actively accumulated basis functions:

$$\mathcal{R}_{freq}(\Theta) = \gamma_u \sum_{i=1}^{k} \sum_{j=1}^{n_i} \Vert \boldsymbol{\omega}_{i,j}^{u} \Vert_2 + \gamma_\lambda \sum_{i=1}^{k} \sum_{j=1}^{n_i} \Vert \boldsymbol{\omega}_{i,j}^{\lambda} \Vert_2,$$
where $\gamma_u > 0$ is the frequency penalty weight for the state basis, and $\gamma_\lambda > 0$ is the corresponding weight for the parameter basis (the same $\gamma_\lambda$ appearing in \eqref{lambda_unnorm}).

Fundamentally, this joint fine-tuning step is mathematically equivalent to training a standard PINN equipped with the SHLNN architecture. However, as demonstrated in our numerical examples, preceding this step with the alternating basis generation significantly accelerates convergence compared to standard PINN training (see Figure \ref{source:compare}). Furthermore, integrating this joint optimization phase into the alternating framework is crucial for maximizing the overall approximation accuracy (see Figure \ref{fine-tuning}). This synergistic effect is systematically analyzed in Section \ref{sec1_result}.

\subsection{Initialization}\label{sec_ini}
Given that the frequency characteristics of the neural network are governed exclusively by the input layer parameters $\boldsymbol{\omega}$, a well-calibrated initialization strategy is pivotal for facilitating rapid convergence and enhancing training efficacy.

To this end, we define a hyperparameter boundary $R_k$, where the $k$-th hidden layer parameters of the base network are initialized by sampling uniformly from the interval $[-R_k, R_k]$. As demonstrated in \cite{rahaman2019spectral,xu2019frequency}, neural networks exhibit a spectral bias, showing a preference for learning low-frequency components during the early stages of training; consequently, an excessively large $R_k$ produces oversized hidden parameters, causing the network to introduce high frequency noise and squander its representational capacity, whereas an overly small $R_k$ restricts the spectral range and prevents the capture of crucial high frequency details. In fact, the choice of $R_k$ reflects the intrinsic expressive power of the neural network architecture itself~\cite{chen2022bridging}.


\begin{remark}
    While \cite{muller1959note,xu2025randomized} advocates for spherical sampling to mitigate the anisotropic ``corner effects'' inherent in hypercube sampling, these effects are minimal in the low-dimensional problems considered here. Given that the impact on the frequency distribution is marginal in this context, we opt for the computationally simpler strategy of direct uniform sampling on the interval $[-R_k, R_k]$.
\end{remark}

\subsubsection{Frequency-Domain Analysis}

For the subsequent update, we employ a tailored frequency-domain analysis to determine value of $R_k$. This is a two-step procedure: first, we define an analysis field based on the specific observation format; second, we apply the Discrete Fourier Transform (DFT) to this field to identify its dominant frequencies.

\begin{enumerate}
  \item \textbf{Determine the analysis field.} For the update of $u$, the data observation term $\mathcal{R}_{\text{data}}^{(k)}$ is assigned the dominant weight in \eqref{u_unnorm} (i.e., $\eta_1, \eta_2 < 1$). Therefore, the frequency initialization is naturally guided by the data residual $r^{(k)}_{\text{data}}$. For the parameter field $\lambda$, the loss $\mathcal{R}^{(k)}_\lambda$ in \eqref{lambda_unnorm} is purely equation-based. Since the basis $\psi_k$ is trained to satisfy $\mathcal{N}_\lambda(\psi_k) \approx r^{(k)}_\lambda$, a natural initialization strategy is to match the dominant frequencies of the approximated solution $\mathcal{N}_\lambda^{-1} r^{(k)}_\lambda$. Specifically, for the source identification problem ($\lambda = f$, $\mathcal{N}_\lambda(\psi) = -\psi$), direct inversion yields $\psi_k \approx -r^{(k)}_\lambda$, making $r^{(k)}_\lambda(\mathbf{x})$ the appropriate analysis field. Conversely, for the potential reconstruction problem ($\lambda = b$, $\mathcal{N}_\lambda(\psi) = u_k\psi$), pointwise inversion of $u_k\psi_k = r^{(k)}_\lambda$ yields the analysis field $r^{(k)}_\lambda(\mathbf{x})\,/\,u_k(\mathbf{x})$.
  
  \item \textbf{Calculate the dominant frequencies.} Given the analysis field obtained above, we compute its discrete Fourier transform over the uniform grid $\mathcal{X}_{\text{FFT}}$ defined by \eqref{eq:unif_grid}. We then select the top $n_{\text{fft}}$ wavevectors associated with the largest amplitudes and assign them as the initial input weights $\boldsymbol{\omega}_{k,j}$ in \eqref{eq:basis_networks}. The remaining $n_k - n_{\text{fft}}$ frequencies are sampled uniformly from $[-R_k, R_k]$, where $$R_k := \min\left\{\omega_{\max}, \frac{1}{4}\sum_{i=1}^{4} \omega_i^{\text{fft}}\right\}.$$ Here, $\omega_{\max}$ acts as a safeguard against high-frequency instability, while the average of the top four wavevectors $\{\omega_i^{\text{fft}}\}_{i=1}^4$ dynamically scales the sampling bound to capture the intrinsic physical features of the current solution.
\end{enumerate}

\subsection{Collocation point strategy}\label{sec_sampling}

The method relies on two distinct but complementary sets of points, each serving a dedicated purpose.

\paragraph{FFT grid $\mathcal{X}_{\mathrm{FFT}}$.}
As described in Section~\ref{sec_ini}, the frequency-domain initialization requires evaluating the residual on a structured uniform grid. For a rectangular domain $\Omega = \prod_{k=1}^{d}[a_k, b_k]$, this grid is defined as
\begin{equation}\label{eq:unif_grid}
    \mathcal{X}_{\mathrm{FFT}} = \prod_{k=1}^{d} \left\{ a_k + \frac{i_k}{m_k}(b_k - a_k) : i_k = 0, 1, \ldots, m_k \right\},
\end{equation}
yielding $N_{\mathrm{FFT}} = \prod_{k=1}^{d}(m_k+1)$ points in total, where $m_k$ denotes the number of grid intervals along the $k$-th coordinate. For domains with irregular boundaries, $\mathcal{X}_{\mathrm{FFT}}$ is constructed via rejection sampling \cite{von1963various}. This grid is used exclusively for frequency analysis and does not serve as training collocation points.

\paragraph{Training collocation set $\mathcal{X}_{\mathrm{int}}$.}
To enhance training efficiency for multi-scale or non-homogeneous solutions, we adopt a residual-based hybrid collocation strategy. At the beginning of each stage, the interior points $\mathcal{X}_{\mathrm{int}}$ are drawn from a mixture distribution \cite{jiao2024gaussian}:
\begin{equation}\label{eq:rad}
    \rho(\mathbf{x}) = \alpha\,\rho_{\mathrm{unif}}(\mathbf{x}) + \beta\,\rho_{\mathrm{adapt}}(\mathbf{x}), \quad \alpha + \beta = 1,
\end{equation}
where $\rho_{\mathrm{unif}}$ is the uniform distribution over $\Omega$, and $\rho_{\mathrm{adapt}}$ is a residual-driven importance distribution. Specifically, letting $M_{\mathrm{unif}} = \alpha M_{\mathrm{int}}$ and $M_{\mathrm{adapt}} = \beta M_{\mathrm{int}}$, we draw $M_{\mathrm{unif}}$ points from $\rho_{\mathrm{unif}}$ and select the remaining $M_{\mathrm{adapt}}$ points with probabilities proportional to the squared data residual $r^{(k)}_{\mathrm{data}}$~\cite{wu2023comprehensive}:
\begin{equation}\label{eq:prob_u}
    p(\mathbf{x}_i) = \frac{\bigl|r^{(k)}_{\mathrm{data}}(\mathbf{x}_i)\bigr|^2 + \delta_0}{\displaystyle\sum_{j=1}^{M_{\mathrm{obs}}} \bigl(\bigl|r^{(k)}_{\mathrm{data}}(\mathbf{x}_j)\bigr|^2 + \delta_0\bigr)}.
\end{equation}
Here $\delta_0 > 0$ is a small constant introduced for numerical stability. This choice is rooted in the principle of importance sampling: sampling proportionally to $|r^{(k)}_{\mathrm{data}}|^2$ minimizes the variance of the Monte Carlo estimator for the data-fidelity loss. As the approximated solution $u_{k-1}$ evolves, this distribution is dynamically updated at each stage. Finally, the boundary collocation set $\mathcal{X}_{\mathrm{bdy}}$ is drawn uniformly from $\partial \Omega$. 

To address scenarios where the available observation samples are insufficient, we introduce an observation smoothing network \cite{ulyanov2018deep}. This network is trained on the actual noisy measurements to approximate a continuous observation function. The learned surrogate function can subsequently be used to generate pseudo-observation data in arbitrarily large quantities, thereby facilitating a significantly more robust reconstruction.

\section{Numerical analysis}\label{sec_analysis}

Building upon extensive research \cite{el2016lipschitz,el2011inverse,ji2025potential} concerning the stability and uniqueness of inverse problems for elliptic equations, we assume that the inverse problem under consideration satisfies stability and uniqueness conditions. Let $(u^\dagger, \lambda^\dagger)$ denote the exact solution pair. We define the continuous population loss as

\begin{equation}\label{risk}
L(u, \lambda) = \mathcal{R}_{obs}(u) + \mathcal{R}_{phy}(u, \lambda) + \mathcal{R}_{bdy}(u),
\end{equation}
where
\[
\mathcal{R}_{obs}(u) = \| \mathcal{O}_l(u) - \mathcal{L}_{obs} \|_{L^2(\Omega)}^2, \quad \mathcal{R}_{phy}(u, \lambda) = \| \mathcal{N}(u, \lambda) \|_{L^2(\Omega)}^2, \quad \mathcal{R}_{bdy}(u) = \| \mathcal{B} u - g \|_{L^2(\partial \Omega)}^2.
\]
By design, $\mathcal{R}_{phy}(u^\dagger, \lambda^\dagger) = 0$ and $\mathcal{R}_{bdy}(u^\dagger) = 0$. In particular, the total loss satisfies $L(u^\dagger, \lambda^\dagger) = 0$ in the noise-free setting, whereas $\mathcal{R}_{obs}(u^\dagger) = \mathcal{O}(\delta_{noise}^2)$ under noisy observations. In the following, $\bar{\mathcal{R}}_u^{(k)}$ and $\bar{\mathcal{R}}_\lambda^{(k)}$ denote the stage-$k$ losses~\eqref{u_unnorm} and~\eqref{lambda_unnorm} expressed in their $L^2$ form. We analyze the asymptotic behavior of both $\bar{\mathcal{R}}_u^{(k)}$ and $\bar{\mathcal{R}}_\lambda^{(k)}$, establishing that the approximate solution $(u_k, \lambda_k)$ converges to the true solution $(u^{\dagger}, \lambda^{\dagger})$ as the number of stages $k \to \infty$. Since the sine activation function is smooth, non-constant, and bounded, the universal approximation property (Lemma~\ref{UOT}) is guaranteed.

For simplicity, we restrict our numerical analysis to inverse source and inverse potential problems, leaving the identification of diffusion coefficients for future work. Detailed proofs of the theorems in this section are deferred to the Appendix B to E.

\begin{assumption}\label{ass1}
    The state variable $u$ and the parameter $\lambda$ exhibit rapid spectral decay, i.e., they belong to $H^p(\Omega)$ with $p > \max \{d/2 + 1, 4\}$.
\end{assumption}

\begin{assumption}\label{ass2}
For potential inversion, we assume the state satisfies $|u^\dagger| \ge c_0 > 0$ a.e. in $\Omega$.
\end{assumption}

\begin{assumption}
\label{assump:uniform}
There exist finite constants $M_u, M_\lambda > 0$, depending only on $\omega_{\max}$, the initial loss $\bar{\mathcal{R}}_u^{(0)}$, and the priors $\|u^\dagger\|_{H^{p}(\Omega)}$, $\|\lambda^\dagger\|_{H^{p}(\Omega)}$, such that the iterates $\{u_k, \lambda_k\}_{k \ge 0}$ generated by the alternating algorithm satisfy
$$\|u_k\|_{W^{1,\infty}(\Omega)} \le M_u, \quad \|\lambda_k\|_{W^{1,\infty}(\Omega)} \le M_\lambda, \quad \forall k \ge 0.$$
\end{assumption}

\subsection{Stability}
\begin{theorem}[Stability of $u$]
\label{thm:u}
    Under Assumption~\ref{ass1} and for any $0 \le s < p$, the state error satisfies the deterministic bound:
    $$\Vert u_k - u^\dagger \Vert_{H^{s}(\Omega)}^2 \le C_{stab}^2 \, \omega_{\max}^{2s} \left( \bar{\mathcal{R}}^{(k)}_u + \delta_{noise}^2  \right)+ \mathcal{O}(\omega_{\max}^{-2(p-s)}),$$
    where $C_{stab} > 0$ is a stability constant depending only on the domain $\Omega$, the observation operator $\mathcal{O}_l$  and the PDE structure, independent of $\omega_{\max}$, $k$, and the network parameters.
\end{theorem}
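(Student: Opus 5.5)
We split the error with a spectral cutoff at the frequency scale $\omega_{\max}$. Let $P_{\omega}$ be a smooth band-limited projection (a Fourier truncation of an $H^p$ extension of the function to a box containing $\Omega$) onto frequencies $|\boldsymbol{\omega}|\le\omega_{\max}$. Since every basis function produced by the algorithm is a finite sine expansion whose frequencies are capped at $\omega_{\max}$ (initialization safeguard plus the frequency penalty), $u_k$ is band-limited at scale $\omega_{\max}$. We then write
\[
u_k-u^\dagger=\bigl(u_k-P_\omega u^\dagger\bigr)+\bigl(P_\omega u^\dagger-u^\dagger\bigr)=:e_{low}+e_{high}.
\]
By Assumption~\ref{ass1} and the standard spectral tail estimate, $\|e_{high}\|_{H^s}^2\le \omega_{\max}^{-2(p-s)}\|u^\dagger\|_{H^p}^2$. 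This is the $\mathcal{O}(\omega_{\max}^{-2(p-s)})$ term.

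For the band-limited part we use a Bernstein (inverse) inequality, $\|e_{low}\|_{H^s}\le C\,\omega_{\max}^{s}\|e_{low}\|_{L^2}$, which is valid because $e_{low}$ has spectrum in the ball of radius $\omega_{\max}$. We then need an $L^2$ stability estimate of the form
\[
\|v\|_{L^2(\Omega)}^2\le C\bigl(\|\mathcal{O}_l v\|_{L^2(\Omega)}^2+\|\mathcal{N}_u v\|_{L^2(\Omega)}^2+\|\mathcal{B}v\|_{L^2(\partial\Omega)}^2\bigr).
\]
This comes from the assumed stability and uniqueness of the inverse problem, combined with well-posedness of the linear forward elliptic operator; for the observation-only part it uses injectivity and boundedness of $\mathcal{O}_l$. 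We apply it to $v=u_k-u^\dagger$ and handle $e_{high}$ separately using boundedness of the operators on $H^p$, which again yields a tail of order $\omega_{\max}^{-2(p-s)}$ after the Bernstein factor is accounted for.

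Next we identify the right-hand side with the stage loss. By the residual definitions, $\bar{\mathcal{R}}^{(k)}_u$ is exactly the data, boundary and physics misfit of $u_k=u_{k-1}+\phi_k$:
\[
\mathcal{O}_l\phi_k-r_{data}^{(k)}=\mathcal{O}_l u_k-\mathcal{L}_{obs},\qquad \mathcal{B}\phi_k-r^{(k)}_{bdy}=\mathcal{B}u_k-g,\qquad \mathcal{N}_u\phi_k-r^{(k)}_{phy}=\mathcal{N}(u_k,\lambda_{k-1}).
\]
Since $\mathcal{O}_l u^\dagger-\mathcal{L}_{obs}=-\epsilon$, $\mathcal{B}u^\dagger=g$ and $\mathcal{N}(u^\dagger,\lambda^\dagger)=0$, the triangle inequality gives $\|\mathcal{O}_l(u_k-u^\dagger)\|^2\le2\bar{\mathcal{R}}_{data}+2\delta_{noise}^2$, and the boundary term is bounded directly. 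The weights $\eta_1,\eta_2$ are absorbed into $C_{stab}$.

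The physics term is the main obstacle. It contains $\lambda_{k-1}$ rather than $\lambda^\dagger$, so $\mathcal{N}_u(u_k-u^\dagger)$ differs from the loss by $(\lambda_{k-1}-\lambda^\dagger)$-dependent terms. We would first try to rely only on the data and boundary terms: take the stability inequality in its purely observational form, with $\mathcal{O}_l$ injective and stably invertible on the band-limited space, and treat the physics term as a nonnegative extra that is simply dropped. If that fails, we would use Assumption~\ref{assump:uniform} together with the structure $\mathcal{N}_u$ independent of $\lambda$ for the source problem; for the potential problem we would accept a constant depending on $M_\lambda$, noting that the stated constant is only claimed independent of $k$.

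Collecting the pieces, $\|u_k-u^\dagger\|_{H^s}^2\le 2\|e_{low}\|_{H^s}^2+2\|e_{high}\|_{H^s}^2$ yields the claimed bound.
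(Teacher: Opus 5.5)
Your argument is essentially the paper's proof: the paper also splits off the tail $u^\dagger-u^\dagger_{<\omega_{\max}}$ via Lemma~\ref{lemma:barron_bound}, applies the inverse inequality to the band-limited part, and returns to $\|u_k-u^\dagger\|_{L^2(\Omega)}$ by a triangle inequality (the extra $L^2$ tail, multiplied by $\omega_{\max}^{2s}$, is again $\mathcal{O}(\omega_{\max}^{-2(p-s)})$); it then bounds $\|u_k-u^\dagger\|_{L^2(\Omega)}^2$ by twice the data-fidelity summand of $\bar{\mathcal{R}}^{(k)}_u$ plus $\mathcal{O}(\delta_{noise}^2)$, treating $\mathcal{O}_l$ as effectively the identity and simply dropping the nonnegative boundary and physics terms, which is exactly your ``first try''. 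So you need neither the three-term stability estimate nor the Assumption~\ref{assump:uniform} fallback, and the fallback would not work anyway: $\mathcal{N}(u_k,\lambda_{k-1})-\mathcal{N}(u_k,\lambda^\dagger)$ is driven by $\lambda_{k-1}-\lambda^\dagger$, which Assumption~\ref{assump:uniform} only bounds rather than makes small, so it leaves an additive non-vanishing term that does not fit the claimed form.
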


\begin{theorem}[Stability of $\lambda$]\label{thm:lambda}
   Under Assumption~\ref{ass1}--\ref{assump:uniform}, the parameter reconstruction error satisfies the deterministic bound:
  $$\Vert \lambda_k - \lambda^\dagger \Vert_{L^2(\Omega)}^2 \le C_{\lambda}^2 \Big( \bar{\mathcal{R}}_{\lambda}^{(k)} + L_u \Vert u_k - u^\dagger \Vert_{H^2(\Omega)}^2 \Big),$$
  where $C_{\lambda} > 0$ is a stability constant depending only on the domain $\Omega$ and $L_u > 0$ is the Lipschitz coupling constant quantifying error propagation from the state variable.
\end{theorem}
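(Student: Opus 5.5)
The plan is to treat the two cases the paper analyzes, inverse source ($\lambda=f$) and inverse potential ($\lambda=b$), with one argument. The idea is to write the stage-$k$ residual $r^{(k)}_\lambda$ in terms of the parameter error plus a term caused only by the state error. The starting point is that $\mathcal{N}(u^\dagger,\lambda^\dagger)=0$.

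\textbf{Step 1: express the parameter residual.} Recall that $\lambda_k=\lambda_{k-1}+\psi_k$ and that $\mathcal{N}_\lambda(\cdot)$ is evaluated with the current state $u_k$. The quantity penalized in $\bar{\mathcal{R}}^{(k)}_{\lambda}$ is then
\[
e_k:=\mathcal{N}_\lambda(\psi_k)-r^{(k)}_\lambda=\mathcal{N}(u_k,\lambda_k).
\]
Subtracting $\mathcal{N}(u^\dagger,\lambda^\dagger)=0$ and using bilinearity gives the following splits.
\begin{itemize}
\item For the source problem,
\[
e_k=\mathcal{N}_u(u_k-u^\dagger)-(\lambda_k-\lambda^\dagger).
\]
\item For the potential problem,
\[
e_k=-\nabla\cdot(q\nabla(u_k-u^\dagger))+\lambda_k(u_k-u^\dagger)+(\lambda_k-\lambda^\dagger)u^\dagger .
\]
\end{itemize}
In both cases we get the pointwise identity
\[
\mathcal{M}(\lambda_k-\lambda^\dagger)=e_k-\mathcal{K}_k(u_k-u^\dagger).
\]
Here $\mathcal{M}$ is multiplication by $-1$ (source) or by $u^\dagger$ (potential). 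The operator $\mathcal{K}_k$ is second order, with coefficients involving $q$, $\nabla q$, and either $b$ or $\lambda_k$.

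\textbf{Step 2: invert $\mathcal{M}$ and bound $\mathcal{K}_k$.} Assumption~\ref{ass2} gives $|u^\dagger|\ge c_0$, so $\|\mathcal{M}^{-1}\|_{L^2\to L^2}\le \max\{1,c_0^{-1}\}$. Next we bound the state-error term:
\[
\|\mathcal{K}_k w\|_{L^2}\le \big(\|q\|_{W^{1,\infty}}+\|\lambda_k\|_{L^\infty}+\|b\|_{L^\infty}\big)\|w\|_{H^2}.
\]
This is uniform in $k$ because Assumption~\ref{assump:uniform} gives $\|\lambda_k\|_{W^{1,\infty}}\le M_\lambda$. Assumption~\ref{ass1} ($p>d/2+1$) guarantees $u^\dagger\in W^{1,\infty}$ and that all terms make sense.

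\textbf{Step 3: combine.} Squaring and using $(a+b)^2\le 2a^2+2b^2$ gives
\[
\|\lambda_k-\lambda^\dagger\|_{L^2}^2\le 2\|\mathcal{M}^{-1}\|^2\big(\|e_k\|_{L^2}^2+C_{\mathcal{K}}^2\|u_k-u^\dagger\|_{H^2}^2\big).
\]
We then identify $\|e_k\|_{L^2}^2\le\bar{\mathcal{R}}^{(k)}_\lambda$, since the group-Lasso term is nonnegative. Setting $C_\lambda^2=2\|\mathcal{M}^{-1}\|^2$ and $L_u=C_{\mathcal{K}}^2$ yields the stated bound. The constant $L_u$ absorbs $M_\lambda$, $q$ and $b$.

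\textbf{Main obstacle.} The delicate point is Step~1: making sure the residual $r^{(k)}_\lambda$ is built with $u_k$ rather than $u_{k-1}$, so that $e_k$ really is $\mathcal{N}(u_k,\lambda_k)$. A related concern is that in the potential case the coupling coefficient is $\lambda_k$, not $\lambda^\dagger$. I would handle this by splitting the product as $\lambda_k u_k-\lambda^\dagger u^\dagger$ in exactly the way above, which puts the bounded $\lambda_k$ (controlled via Assumption~\ref{assump:uniform}) against the state error. That leaves the clean multiplier $u^\dagger$, which Assumption~\ref{ass2} lets us invert. A further subtlety is that $C_\lambda$ then depends on $c_0$ as well as $\Omega$; the statement's ``depending only on $\Omega$'' would be read as absorbing these fixed problem data.
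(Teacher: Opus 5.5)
Your proposal is correct and follows essentially the same route as the paper, which also adds and subtracts $\mathcal{N}(u^\dagger,\lambda_k)$ to isolate $\mathcal{A}_{u^\dagger}(\lambda_k-\lambda^\dagger)$ (your $\mathcal{M}$). It likewise bounds the state perturbation $\mathcal{P}_u$ (your $\mathcal{K}_k$) by $L_u\|u_k-u^\dagger\|_{H^2}^2$ via Assumption~\ref{assump:uniform}, inverts either $-1$ or multiplication by $u^\dagger$ using $c_0$, and drops the nonnegative group-Lasso term; your observation that $C_\lambda$ also depends on $c_0$ matches how the paper absorbs its case-specific constants.
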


\subsection{Convergence}

\begin{theorem}[Linear Convergence]
\label{thm:convergence}
Under Assumption~\ref{ass1}--\ref{assump:uniform}, suppose the basis networks are sufficiently wide, and $\eta_2$ is sufficiently small (see the details in proof). Then there exist weights $\kappa_1, \kappa_2 > 0$ such that the joint loss $\mathcal{J}^{(k)} := \kappa_1 \bar{\mathcal{R}}_u^{(k)} + \kappa_2 \bar{\mathcal{R}}_{\lambda}^{(k)}$ satisfies:
\begin{equation}
    \mathcal{J}^{(k)} \leq \rho^{k} \mathcal{J}^{(0)} + \frac{1-\rho^{k}}{1-\rho} \zeta,
\end{equation}
where $\rho \leq  7/8$ is a uniform contraction rate independent of $k$ and $\zeta = \mathcal{O}(\omega_{\max}^4\delta_{noise}^2 + \omega_{\max}^{-2(p-4)})$. In particular, as $k\to\infty$, $\bar{\mathcal{R}}_u^{\infty}=\mathcal{O}(\delta_{noise}^2+\omega_{\max}^{-2(p-2)})$ and $\bar{\mathcal{R}}_\lambda^{\infty}=\mathcal{O}(\omega_{\max}^4\delta_{noise}^2+\omega_{\max}^{-2(p-4)})$.
\end{theorem}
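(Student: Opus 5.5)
The plan is to derive a coupled one-step recursion for the two stage losses and then combine them with suitable weights so that the cross-coupling is absorbed into a single contraction. The main tool is the universal approximation property (Lemma~\ref{UOT}) for sufficiently wide sine networks, used at each stage.

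\textbf{Step 1: one-step bound for the state loss.} Let $e_u^{(k)}$ denote the ideal correction, namely the function that would zero the data, boundary and physics residuals at stage $k$. This correction is $u^\dagger-u_{k-1}$ plus a noise contribution. If $n_k$ is large enough, Lemma~\ref{UOT} applied with bandwidth $\omega_{\max}$ gives a network $\phi_k$ with
$$\|\phi_k - P_{\omega_{\max}}(u^\dagger-u_{k-1})\|_{H^2}\le \epsilon_k,$$
where $P_{\omega_{\max}}$ is the spectral truncation. The truncation error is $\mathcal{O}(\omega_{\max}^{-(p-2)})$ in $H^2$ by Assumption~\ref{ass1}.

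Because the trained $\phi_k$ minimizes $\mathcal{R}^{(k)}_u$, its loss is no larger than that of this competitor. The data and boundary parts of the competitor's loss are then $\mathcal{O}(\delta_{noise}^2+\omega_{\max}^{-2(p-2)})$. The physics part is $\eta_2\|\mathcal{N}(u^\dagger,\lambda_{k-1})\|^2$. For the potential problem this equals $\eta_2\|(\lambda^\dagger-\lambda_{k-1})u^\dagger\|^2$; for the source problem it equals $\eta_2\|\lambda^\dagger-\lambda_{k-1}\|^2$.

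Theorem~\ref{thm:lambda} bounds $\|\lambda^\dagger-\lambda_{k-1}\|^2$ by $\bar{\mathcal{R}}^{(k-1)}_\lambda$ plus $\|u_{k-1}-u^\dagger\|_{H^2}^2$. Theorem~\ref{thm:u} with $s=2$ then bounds the latter by $\omega_{\max}^4(\bar{\mathcal{R}}^{(k-1)}_u+\delta_{noise}^2)$. The result is a recursion of the form
$$\bar{\mathcal{R}}^{(k)}_u\le a\,\bar{\mathcal{R}}^{(k-1)}_u+b\,\eta_2\bar{\mathcal{R}}^{(k-1)}_\lambda+\zeta_u .$$
The factor $a<1$ comes from the fact that the greedy residual after fitting is at most a fraction of the previous one. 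I would get this by writing the new residual as the old residual minus its best network projection, and using the width to make the projection capture at least half of the energy.

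\textbf{Step 2: one-step bound for the parameter loss.} I would do the same for $\psi_k$. The ideal correction solves $\mathcal{N}_\lambda(\psi)=-\mathcal{N}(u_k,\lambda_{k-1})$. Pointwise this gives $\psi=(\lambda^\dagger-\lambda_{k-1})+\mathcal{N}_u(u^\dagger-u_k)/u_k$, where division by $u_k$ is justified by Assumption~\ref{ass2} and the closeness of $u_k$ to $u^\dagger$. Assumption~\ref{assump:uniform} keeps the multiplier $u_k$ and the group-Lasso penalty bounded, with the penalty of order $\gamma_\lambda n_k\omega_{\max}$, which I fold into $\zeta$.

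This yields
$$\bar{\mathcal{R}}^{(k)}_\lambda\le a'\bar{\mathcal{R}}^{(k-1)}_\lambda+ L\|u_k-u^\dagger\|_{H^2}^2+\dots$$
Theorem~\ref{thm:u} converts the $H^2$ error into $\omega_{\max}^4\bar{\mathcal{R}}^{(k)}_u$. Then Step 1 re-expresses this in terms of stage $k-1$ quantities, giving a $2\times2$ linear recursion
$$\begin{pmatrix}\bar{\mathcal{R}}^{(k)}_u\\ \bar{\mathcal{R}}^{(k)}_\lambda\end{pmatrix}\le A\begin{pmatrix}\bar{\mathcal{R}}^{(k-1)}_u\\ \bar{\mathcal{R}}^{(k-1)}_\lambda\end{pmatrix}+\begin{pmatrix}\zeta_u\\ \zeta_\lambda\end{pmatrix}.$$

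\textbf{Step 3: weighting and conclusion.} Let $(\kappa_1,\kappa_2)$ be a positive left vector with $(\kappa_1,\kappa_2)A\le\rho(\kappa_1,\kappa_2)$. Such a vector exists when the diagonal entries are at most $3/4$ and the off-diagonal product is small. Making the off-diagonal product small is exactly where the requirement that $\eta_2$ be sufficiently small enters: the entry $b\eta_2$ must compensate the large factor $L\omega_{\max}^4$ in the other off-diagonal entry. Concretely, take $\kappa_2=1$ and $\kappa_1$ of order $L\omega_{\max}^4$, and require $\eta_2 b\kappa_1\le 1/8$. This gives $\rho\le 3/4+1/8=7/8$.

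Unrolling the scalar recursion $\mathcal{J}^{(k)}\le\rho\mathcal{J}^{(k-1)}+\zeta$ gives the geometric-sum bound. The limits follow by letting $k\to\infty$ and reading off the individual components of $\zeta$.

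\textbf{Main obstacle.} I expect the hardest step to be the uniform contraction factor $a\le 3/4$ of the greedy step. Universal approximation alone only gives existence of good approximants, not a guaranteed fraction of the current residual. I would try first to argue that the optimal width-$n_k$ network contains, up to $\epsilon_k$, the truncated correction itself. The residual then drops to truncation plus coupling terms, and the "$3/4$" comes from choosing $\epsilon_k$ as a fixed fraction of the current loss.
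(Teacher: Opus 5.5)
Your skeleton matches the paper's. You combine a competitor built from the band-limited correction with the minimiser property of each stage. Theorems~\ref{thm:lambda} and~\ref{thm:u} (with $s=2$) turn the $\eta_2$-weighted coupling into $\bar{\mathcal{R}}_\lambda^{(k-1)}$ and $\omega_{\max}^4\bar{\mathcal{R}}_u^{(k-1)}$. The result is a $2\times 2$ nonnegative recursion, with smallness of $\eta_2$ controlling the off-diagonal product, closed by positive left weights.

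\textbf{The gap is in Step~2.} There you invert $\mathcal{N}_\lambda$ at $u_k$, i.e.\ divide by $u_k$, citing Assumption~\ref{ass2} and ``closeness of $u_k$ to $u^\dagger$''. Nothing available gives that closeness:
\begin{itemize}
\item Assumption~\ref{assump:uniform} bounds $u_k$ only from above.
\item Theorem~\ref{thm:u} (with $s>d/2$) controls $\|u_k-u^\dagger\|_{L^\infty}$ only through $\omega_{\max}^{2s}(\bar{\mathcal{R}}_u^{(k)}+\delta_{noise}^2)$. This is not small at early stages, since the algorithm starts from $u_0=0$, and smallness of the losses is exactly what you are trying to prove.
\end{itemize}
So $u_k$ may vanish, and your ideal correction need not exist in $L^2$. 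Even when it does, its tail beyond $\omega_{\max}$ involves higher derivatives of $1/u_k$. These are not controlled by Assumption~\ref{assump:uniform} and scale like powers of $\omega_{\max}$, so its truncation error is not of the claimed order.

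\textbf{The fix is the paper's: never invert at $u_k$.}
\begin{itemize}
\item Take the competitor $\psi^\star\approx\lambda^\dagger_{<\omega_{\max}}-\lambda_{k-1}$.
\item Split $\mathcal{N}(u_k,\lambda)=\mathcal{N}(u^\dagger,\lambda)+\mathcal{P}_u^{(k)}\lambda$, where $\mathcal{P}_u^{(k)}=\mathcal{N}(u_k,\cdot)-\mathcal{N}(u^\dagger,\cdot)$.
\item The first piece is $\|\mathcal{A}_{u^\dagger}(\lambda_{k-1}+\psi^\star-\lambda^\dagger)\|_{L^2}^2$. It is bounded by the approximation error plus $\|\lambda^\dagger_{\mathrm{tail}}\|_{L^2}^2$, using only $u^\dagger\in L^\infty$.
\item The second piece is at most $L_u\|u_k-u^\dagger\|_{H^2}^2$, by the Lipschitz bound in the proof of Theorem~\ref{thm:lambda}.
\end{itemize}
This produces precisely your term $L\|u_k-u^\dagger\|_{H^2}^2$. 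Assumption~\ref{ass2} then enters only through the stability constant $C_\lambda$ used in Step~1. The paper organizes the same computation as a contraction for the ideal loss $\bar{\mathcal{R}}_\lambda^\dagger(\lambda)=\|\mathcal{A}_{u^\dagger}(\lambda-\lambda^\dagger)\|_{L^2}^2$. Two Young splits then pass to the actual losses at $u_k$ and $u_{k-1}$.

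\textbf{A smaller loose end: the penalty term.} Folding the group-Lasso term $\gamma_\lambda\sum_j\|\boldsymbol{\omega}^\lambda_{k,j}\|_2\le\gamma_\lambda n_k\omega_{\max}$ into $\zeta$ causes two problems. It is not compatible with $\zeta=\mathcal{O}(\omega_{\max}^4\delta_{noise}^2+\omega_{\max}^{-2(p-4)})$. It also breaks $k$-independence of $\zeta$ if $n_k$ must grow, as tying $\epsilon_k$ to the current loss may require. You need an explicit scaling hypothesis on $\gamma_\lambda$ and a uniform width bound. The paper's proof avoids this only by treating $\bar{\mathcal{R}}_\lambda$ as the physics residual alone.

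\textbf{Your other steps are sound variants of the paper's.}
\begin{itemize}
\item Step~1: fitting the whole band-limited correction in $H^2$ is what the paper's relative $L^2$ contraction amounts to. Its inverse-inequality factor $\kappa_u^2=1+\eta_1C_{\mathcal{B}}^2\omega_{\max}^2+\eta_2C_{\mathcal{N}}^2\omega_{\max}^4$ forces $\alpha_u$ close to $1$.
\item Step~3: your explicit weights work. With diagonal entries of $A=(A_{ij})$ at most $3/4$ and $8A_{21}\le\kappa_1\le 1/(8A_{12})$ (feasible iff $A_{12}A_{21}\le 1/64$), one gets $(\kappa_1,1)A\le\frac78(\kappa_1,1)$. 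Taking $\kappa_1$ of order $\omega_{\max}^4$ makes the order of $\zeta=\kappa_1\zeta_u+\zeta_\lambda$ and the componentwise limits more transparent than the paper's Perron--Frobenius argument with $(I-\mathbf{M})^{-1}\boldsymbol{\zeta}$.
\item The main obstacle you flag is not resolved in the paper by any further idea. It too simply asserts the relative contraction from the density statement of Lemma~\ref{UOT}.
\end{itemize}
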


\begin{theorem}[Convergence rate]
\label{thm:balance}
By choosing $\omega_{\max}^\star\sim\delta_{noise}^{-1/(p-2)}$, as $k\to\infty$
we have
$$
\|u_{\infty}-u^\dagger\|_{L^2(\Omega)}^2=\mathcal{O}\!\big(\delta_{noise}^{2}\big),
\qquad
\|\lambda_{\infty}-\lambda^\dagger\|_{L^2(\Omega)}^2=\mathcal{O}\!\big(\delta_{noise}^{\,2(p-4)/(p-2)}\big).
$$
\end{theorem}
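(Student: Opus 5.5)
The plan is to combine the asymptotic residual levels of Theorem~\ref{thm:convergence} with the stability estimates of Theorems~\ref{thm:u} and~\ref{thm:lambda}. We then optimize over $\omega_{\max}$ by balancing the noise-amplification term against the spectral truncation term.

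\textbf{Step 1 (state error).} Take $k\to\infty$ in Theorem~\ref{thm:u} with $s=0$. This gives
\[
\|u_\infty-u^\dagger\|_{L^2(\Omega)}^2 \le C_{stab}^2\big(\bar{\mathcal{R}}_u^{\infty}+\delta_{noise}^2\big)+\mathcal{O}(\omega_{\max}^{-2p}).
\]
Theorem~\ref{thm:convergence} supplies $\bar{\mathcal{R}}_u^{\infty}=\mathcal{O}(\delta_{noise}^2+\omega_{\max}^{-2(p-2)})$. Substituting $\omega_{\max}^\star\sim\delta_{noise}^{-1/(p-2)}$ gives $\omega_{\max}^{-2(p-2)}\sim\delta_{noise}^{2}$. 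Also $\omega_{\max}^{-2p}=\delta_{noise}^{2p/(p-2)}\le\delta_{noise}^2$ for small $\delta_{noise}$. Together these yield the $\mathcal{O}(\delta_{noise}^2)$ bound. Passing to the limit should be justified by Assumption~\ref{assump:uniform}, which gives uniform bounds on the iterates, together with the fact that the constants in the stability theorem are independent of $k$.

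\textbf{Step 2 (parameter error).} Apply Theorem~\ref{thm:lambda}, which requires bounding $\|u_\infty-u^\dagger\|_{H^2}^2$. Theorem~\ref{thm:u} with $s=2$ gives
\[
\|u_\infty-u^\dagger\|_{H^2(\Omega)}^2\lesssim \omega_{\max}^{4}\big(\bar{\mathcal{R}}_u^\infty+\delta_{noise}^2\big)+\omega_{\max}^{-2(p-2)}.
\]
This equals $\mathcal{O}(\omega_{\max}^4\delta_{noise}^2+\omega_{\max}^{4-2(p-2)}+\omega_{\max}^{-2(p-2)}) = \mathcal{O}(\omega_{\max}^4\delta_{noise}^2+\omega_{\max}^{-2(p-4)})$. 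Adding $\bar{\mathcal{R}}_\lambda^\infty=\mathcal{O}(\omega_{\max}^4\delta_{noise}^2+\omega_{\max}^{-2(p-4)})$ from Theorem~\ref{thm:convergence} gives
\[
\|\lambda_\infty-\lambda^\dagger\|_{L^2}^2=\mathcal{O}\big(\omega_{\max}^4\delta_{noise}^2+\omega_{\max}^{-2(p-4)}\big).
\]
At $\omega_{\max}^\star$ the first term is $\delta_{noise}^{2-4/(p-2)}=\delta_{noise}^{2(p-4)/(p-2)}$ and the second is $\delta_{noise}^{2(p-4)/(p-2)}$. The two terms balance, giving the claimed rate. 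Since $p>4$ by Assumption~\ref{ass1}, the exponent is positive.

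\textbf{Main obstacle.} The delicate point is the limit $k\to\infty$. The stability bounds are stated for each finite $k$, so defining $u_\infty,\lambda_\infty$ needs some care. I would handle it in one of two ways:
\begin{itemize}
\item extract weakly convergent subsequences using the uniform $W^{1,\infty}$ bounds of Assumption~\ref{assump:uniform} and apply lower semicontinuity of norms;
\item or interpret the statement as a $\limsup_{k}$ bound, using that $\rho^k\mathcal{J}^{(0)}\to0$ in Theorem~\ref{thm:convergence}.
\end{itemize}
A secondary check is that $\omega_{\max}^\star$ is a valid choice for the wide-network hypotheses of Theorem~\ref{thm:convergence}, i.e.\ that the constants there do not degrade as $\omega_{\max}$ grows. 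I would argue this from the stated independence of $C_{stab}$, $C_\lambda$, $\rho$ from $\omega_{\max}$, while absorbing $L_u$ into the constant via Assumption~\ref{assump:uniform}.
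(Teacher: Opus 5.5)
Your proposal is correct and follows the paper's proof essentially step for step. You use Theorem~\ref{thm:u} at $s=0$ with the limiting bound of Theorem~\ref{thm:convergence} for the state, then Theorem~\ref{thm:lambda} with Theorem~\ref{thm:u} at $s=2$ to get $\mathcal{O}(\omega_{\max}^4\delta_{noise}^2+\omega_{\max}^{-2(p-4)})$ for the parameter, and finally balance the two terms (equivalent to the paper's $g'(\omega_{\max})=0$) at $\omega_{\max}^\star\sim\delta_{noise}^{-1/(p-2)}$.

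Your reading of $k\to\infty$ as a $\limsup$ bound is extra care that the paper, which passes to the limit without comment, does not take. However, Assumption~\ref{assump:uniform} explicitly allows $M_u, M_\lambda$, and hence $L_u$ and the cross constants, to depend on $\omega_{\max}$. So your proposed justification of uniformity in $\omega_{\max}$ does not follow from that assumption as stated; the paper also ignores this dependence.
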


\begin{remark}
According to Theorem~\ref{thm:balance}, for a smooth target field, the $L^2$ errors of both $u$ and $\lambda$ converge at a rate of $\mathcal{O}(\delta_{noise})$. Although our theorem only provides an asymptotic convergence rate as $k\to\infty$, which might be conservative, this rate is readily observable within a finite number of stages in practice (see Figure~\ref{potential:noise_loglog} as an example).
\end{remark}

\section{Numerical Results}\label{sec1_result}
In this section, we evaluate the performance of our proposed method on several elliptic inverse problems. Table~\ref{tab:hyperparams} summarizes the hyperparameters for the network architecture, sampling complexity, and optimizer settings. For all examples, the number of neurons at the $k$-th stage follows a linear growth schedule: $n_k = 30 + 5(k-1)$. We use the Adam optimizer for the basis update and L-BFGS for the fine-tuning step. For noise levels of 10\% or higher, we train a 3-layer denoising network~\cite{ulyanov2018deep} with a tanh activation function on the noisy data. This trained network then serves as a surrogate observation model, unless otherwise specified.

\begin{table}[ht!]
\centering
\caption{Hyperparameters for each numerical example. $T$ is the joint fine-tuning period; $E_{Adam}^{u}$ and $E_{Adam}^{\lambda}$ are the Adam epochs in the every stage; $E_{LBFGS}$ is the L-BFGS fine-tuning iterations. $(E_{dn},\,\mathrm{lr}_{dn})$ are the epochs and learning rate of the denoising pre-processor; $\omega_{\max}$ is the frequency band-limit imposed on the basis networks.}
\label{tab:hyperparams}
\resizebox{\textwidth}{!}{%
\begin{tabular}{lccccccc}
\hline
Example & $T$ & $M_{obs}$/$M_{int}$ & Stages & $(E_{Adam}^{u},\,\mathrm{lr}) / (E_{Adam}^{\lambda},\,\mathrm{lr})$ & $(E_{LBFGS},\,\mathrm{lr})$ & $(E_{dn},\,\mathrm{lr}_{dn})$ & $\omega_{\max}$ \\
\hline
4.1 & 3 & 4500  & 6  & $(500,\,0.005)$ / $(500,\,0.005)$ & $(150,\,0.5)$ & $(5000,\,0.001)$ & $30\pi$ \\
4.2 & 2 & 4000  & 4  & $(500,\,0.005)$ / $(800,\,0.02)$  & $(50,\,0.1)$  & $(5000,\,0.001)$ & $30\pi$ \\
4.3 & 3 & 4500  & 9  & $(500,\,0.005)$ / $(500,\,0.005)$ & $(300,\,0.2)$ & $(2500,\,0.01)$  & $30\pi$ \\
4.4 & 7 & 4500  & 7  & $(500,\,0.005)$ / $(800,\,0.005)$ & $(500,\,1.0)$ & $(5000,\,0.005)$ & $30\pi$ \\
4.5 & 2 & 15000 & 16 & $(500,\,0.005)$ / $(500,\,0.005)$ & $(100,\,0.1)$ & $(5000,\,0.001)$ & $30\pi$ \\
\hline
\end{tabular}%
}
\end{table}


To quantitatively evaluate the reconstruction quality, we utilize the pointwise absolute error $|u - u^{\dagger}|$ and the relative $L^2$ error computed over $M_{\mathrm{test}}$ testing points:
\begin{equation}
    \text{Relative } L^2 \text{ Error} = \frac{\sqrt{\sum_{j=1}^{M_{\mathrm{test}}} (u_{j} - u^{\dagger}_{j})^2}}{\sqrt{\sum_{j=1}^{M_{\mathrm{test}}} (u^{\dagger}_{j})^2}},
\end{equation}
where $u$ and $u^{\dagger}$ denote the predicted and exact solutions, respectively.

\subsection{Inverse source problem}
\begin{example}\label{source_example}(\cite{duan2024recovering} Example 4.1)
    We set $\Omega=(0,1)^2$, $q(x) \equiv 1$ and $b(x) \equiv 1$. The boundary conditions are Neumann. The ground truth source term $f^{\dagger}(x)$ is constructed as a superposition of two Gaussian functions. Let $x = (x_1, x_2)$, then $f^{\dagger}$ is given by:
\begin{equation}
\begin{cases}

G_{1}(x_{1}, x_{2}) = \exp(-9 \times (x_{1}-0.3)^{2} - 25 \times (x_{2}-0.7)^{2}) \\

G_{2}(x_{1}, x_{2}) = \exp(-25 \times (x_{1}-0.7)^{2} - 9 \times (x_{2}-0.3)^{2}) \\

f^{\dagger}(x_{1}, x_{2}) = 25 G_{1}(x_{1}, x_{2}) + 36 G_{2}(x_{1}, x_{2})
\end{cases}
\end{equation}
The ground truth $u^{\dagger}$ is the solution to the aforementioned governing equation, subject to the zero boundary condition $u|_{\partial\Omega} = 0$. For the inverse problem, we assume the source term is unknown and must be recovered from noisy discrete measurements. The observed dataset, denoted by $\{(y_{i}^{\delta}, \mathbf{z}_{i}^{\delta})\}_{i=1}^{m}$, comprises both solution values and gradient vectors. These measurements are modeled as ground truth values corrupted by independent additive white Gaussian noise:

$$y_{i}^{\delta} = u^{\dagger}(\boldsymbol{x}_{i}) + \epsilon_{i}, \quad \mathbf{z}_{i}^{\delta} = \nabla u^{\dagger}(\boldsymbol{x}_{i}) + \boldsymbol{\eta}_{i},$$
where the noise terms follow normal distributions $\epsilon_{i} \sim \mathcal{N}(0, \sigma_{u}^{2})$ and $\boldsymbol{\eta}_{i} \sim \mathcal{N}(\mathbf{0}, \sigma_{\nabla u}^{2}\mathbf{I})$, respectively.

\end{example}

\begin{figure}[ht!]
    \centering
    \begin{subfigure}[b]{0.24\textwidth}
        \centering
        \includegraphics[width=\textwidth]{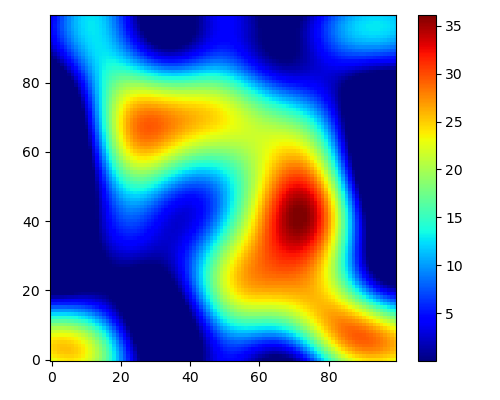}
        \caption{Stage 1}
        \label{source:stage1}
    \end{subfigure}
    \hfill 
    \begin{subfigure}[b]{0.24\textwidth}
        \centering
        \includegraphics[width=\textwidth]{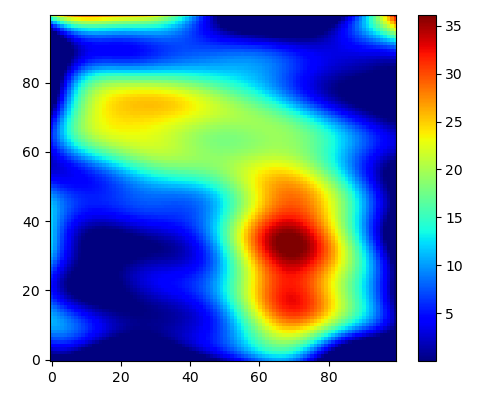}
        \caption{Stage 2}
        \label{source:stage2}
    \end{subfigure}
    \hfill
    \begin{subfigure}[b]{0.24\textwidth}
        \centering
        \includegraphics[width=\textwidth]{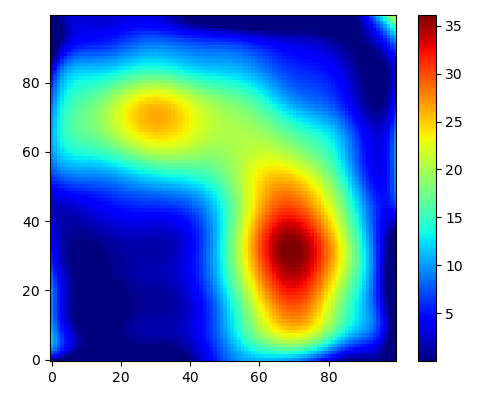}
        \caption{Stage 3}
        \label{source:stage3}
    \end{subfigure}
    \hfill
    \begin{subfigure}[b]{0.24\textwidth}
        \centering
        \includegraphics[width=\textwidth]{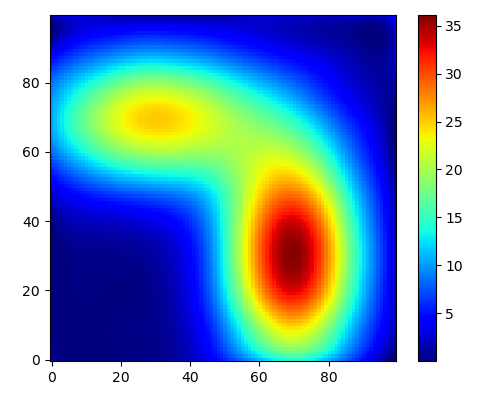}
        \caption{Stage 4}
        \label{source:stage4}
    \end{subfigure}
    
    \caption{Reconstructed $f$ at different training stages for Example \ref{source_example} with 1\% noise.}
    \label{source:four_stages}
\end{figure}

\begin{table}[ht!]
    \centering
    \caption{Relative $L^2$ errors of the reconstructed $f$ and $u$ for Example \ref{source_example} with 1\% noise.}
    \begin{tabular}{ccccccc} 
        \toprule
        Stage & 1 & 2 & 3 & 4 & 5 & 6 \\
        \midrule
        $err(f)$ & $5.66\times10^{-1}$ & $2.99\times10^{-1}$ & $1.37\times10^{-1}$ & $2.31\times10^{-2}$ & $1.09\times10^{-2}$ & $8.59\times10^{-3}$\\
        $err(u)$ & $6.13\times10^{-2}$ & $7.45\times10^{-3}$ & $2.73\times10^{-3}$ & $5.66\times10^{-4}$ & $2.87\times10^{-4}$ & $1.73\times10^{-4}$\\
        \bottomrule
    \end{tabular}
    \label{source_table1}
\end{table}

Figure \ref{source:four_stages} illustrates the evolution of the reconstructed source term $f$ across different training stages under 1\% noise. It is evident that the dominant structure is captured as early as the first stage, while fine-scale details are effectively resolved by the fourth stage. A quantitative assessment is provided in Table \ref{source_table1}, which lists the relative $L^2$ errors for both the solution $u$ and the source $f$. We observe a rapid error decay during the stages, followed by a deceleration in convergence. This behavior is attributed to the prioritized learning of low-frequency components, aligning with our theoretical analysis.  

\begin{figure}[ht!]
    \centering
    \begin{subfigure}[b]{0.24\textwidth}
        \centering
        \includegraphics[width=\textwidth]{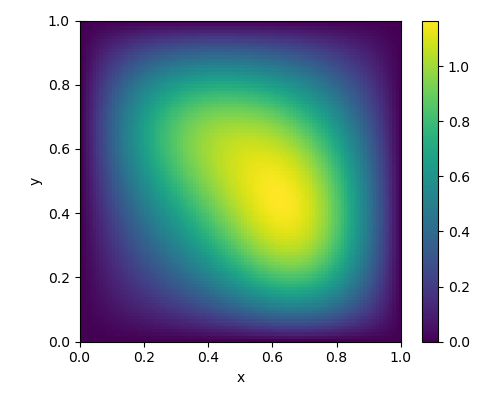}
        \caption{$u^{\dagger}$}
    \end{subfigure}
    \begin{subfigure}[b]{0.24\textwidth}
        \centering
        \includegraphics[width=\textwidth]{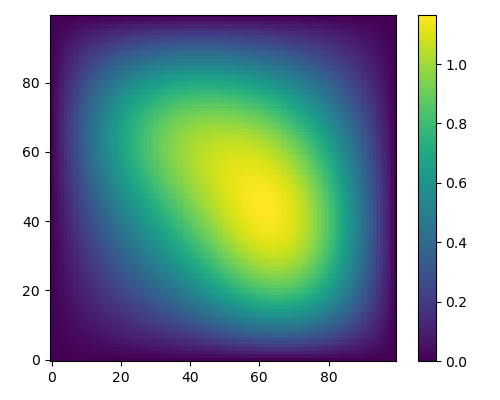}
        \caption{$u$}
    \end{subfigure}
    \begin{subfigure}[b]{0.24\textwidth}
        \centering
        \includegraphics[width=\textwidth]{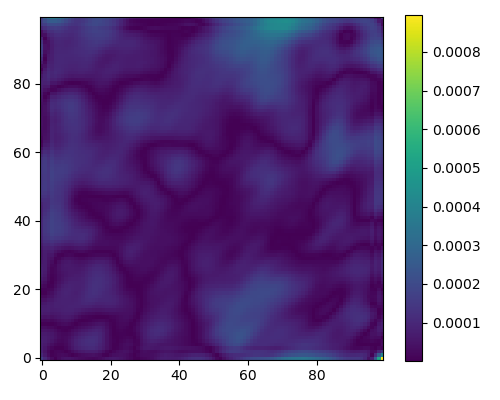}
        \caption{$|u - u^{\dagger}|$}
    \end{subfigure}

    \vspace{1em}

    \begin{subfigure}[b]{0.24\textwidth}
        \centering
        \includegraphics[width=\textwidth]{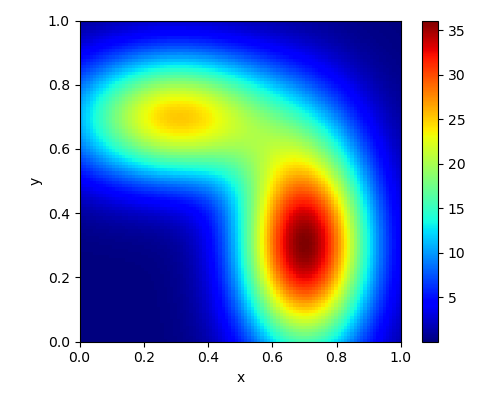}
        \caption{$f^{\dagger}$}
    \end{subfigure}
    \begin{subfigure}[b]{0.24\textwidth}
        \centering
        \includegraphics[width=\textwidth]{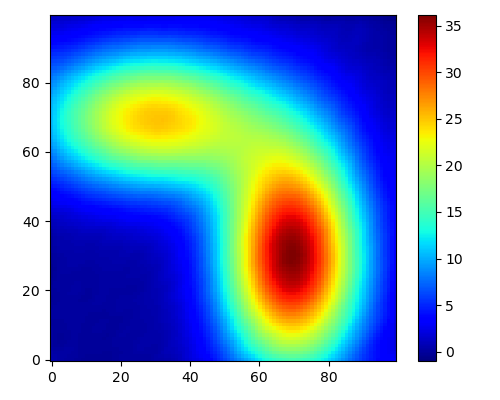}
        \caption{$f$}
    \end{subfigure}
    \begin{subfigure}[b]{0.24\textwidth}
        \centering
        \includegraphics[width=\textwidth]{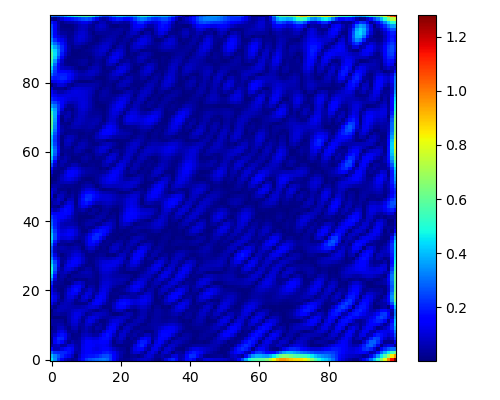}
        \caption{$|f - f^{\dagger}|$}
    \end{subfigure}

    \caption{Reconstruction results for Example \ref{source_example} with 1\% noise.}
    \label{source:result}
\end{figure}
Figure \ref{source:result} illustrates the reconstructed solution, the ground truth and the absolute error at last training stages. The remaining errors in the reconstructed source term $f$ are primarily concentrated near the boundaries, which can be attributed to the inherent challenges in estimating second-order derivatives given the function value itself.

\begin{table}[ht!]
    \centering
    \caption{$L^2$ relative errors for Example \ref{source_example} without observation smoothing.}
    \begin{tabular}{ccccc} 
        \toprule
        Method & \multicolumn{4}{c}{Ours without smoothing}  \\
        
        \cmidrule(lr){2-5} 
        
        $\delta$ & 1\% & 2\% & 10\% & 20\% \\
        \midrule
        
        $err(f)$ & $8.59\times10^{-3}$ & $1.57\times10^{-2}$ & $9.22\times10^{-2}$ & $1.48\times10^{-1}$  \\
        $err(u)$ & $1.73\times10^{-4}$ & $3.59\times10^{-4}$ & $1.82\times10^{-3}$ & $3.73\times10^{-3}$ \\
        \bottomrule
    \end{tabular}
    \label{source_without}
\end{table}

\begin{table}[ht!]
    \centering
    \caption{Relative $L^2$ errors and computational costs for reconstructing $f$ across different methods.}
    \begin{tabular}{ccccc} 
        \toprule
        \multirow{1}{*}{Error} & \multicolumn{3}{c}{$f$} & \multirow{2}{*}{Times} \\
        \cmidrule(lr){2-4}
        
        $\delta$ & 1\% & 10\% & 20\% & \\ 
        \midrule
        
        ALBC  & $8.59\times10^{-3}$ & $4.33\times10^{-2}$ & $5.26\times10^{-2}$ & 12.87/44.30 \\
        PINNs & $6.95\times10^{-2}$ & $7.04\times10^{-2}$ & $7.16\times10^{-2}$ & 253.81 \\
        L-ALBC  & $1.28\times10^{-2}$ & $4.33\times10^{-2}$ & $5.29\times10^{-2}$ & 12.1/43.49 \\
        Collocation  & $1.08\times10^{-1}$ & $1.83\times10^{-1}$ & $1.84\times10^{-1}$ & 136 \\
        \bottomrule
    \end{tabular}
    \label{source_with_noise}
\end{table}

Table \ref{source_without} presents the relative $L^2$ errors for $u$ and $f$ across various noise levels in the absence of the observation smoothing step. The reconstruction error exhibits a near-linear dependence on the noise level, empirically validating our theoretical analysis that the error scales as $\mathcal{O}(\delta)$. 

To further demonstrate the superiority of our approach, we compare ALBC against the standard PINN baseline and the classical collocation method under varying noise conditions in Table \ref{source_with_noise}. For the PINN baseline, we adopt the network architecture detailed in \cite{duan2024current}, utilizing $10,000$ collocation points and training for $50,000$ epochs. For the classical collocation method, we employ radial basis functions (RBFs) with $10,000$ collocation points and $8,100$ basis functions. Furthermore, to validate the effectiveness of the global fine-tuning step, we introduce a ``lite'' variant of our method, denoted as L-ALBC. This variant omits the periodic fine-tuning phase (step 4 in Algorithm~\ref{main_algo}). The reported times in Table \ref{source_with_noise} for ALBC and L-ALBC follow a ``without smoothing / with smoothing'' format. The first value represents the pure training time applied in low-noise scenarios, while the second value includes the additional computational overhead of the neural network-based denoising pre-processing required for high-noise cases. As clearly indicated in the results, ALBC consistently outperforms all baseline methods across all tested noise levels.

\begin{figure}[ht!]
    \centering
    \includegraphics[width=0.8\linewidth]{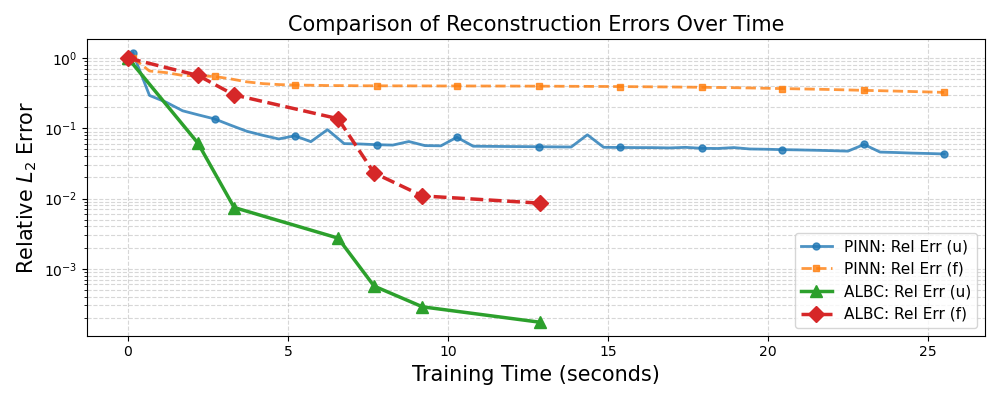}
    \caption{Evolution of relative $L^2$ errors for $u$ and $f$ versus training time for ALBC and PINN methods. We demonstrate the result for Example \ref{source_example} with 1\% noise.}
    \label{source:compare}
\end{figure}

    

Finally, Figure \ref{source:compare} illustrates the evolution of the relative error with respect to training time, comparing our proposed method against the PINN baseline. As demonstrated, our approach achieves significantly faster convergence in the reconstruction of both $u$ and $f$.


\subsection{Inverse potential problem}
\begin{example}\label{potential_example}
    We aim to recover the potential coefficient $b$. The domain, boundary conditions, and observation data are consistent with Example \ref{source_example}, with $q(x)\equiv 1$. The true solution $u = 1 + \sin(x)\sin(y)$ satisfies $|u^\dagger| \ge 1 > 0$ on $\bar{\Omega}$ (verifying the non-degeneracy condition in Assumption~2). We set $b = 0.5 + \sin(\pi x)\sin(\pi y)$, while the source term $f$ is computed explicitly using the PDE.
\end{example}

\begin{figure}[ht!]
    \centering
    \begin{subfigure}[b]{0.24\textwidth}
        \centering
        \includegraphics[width=\textwidth]{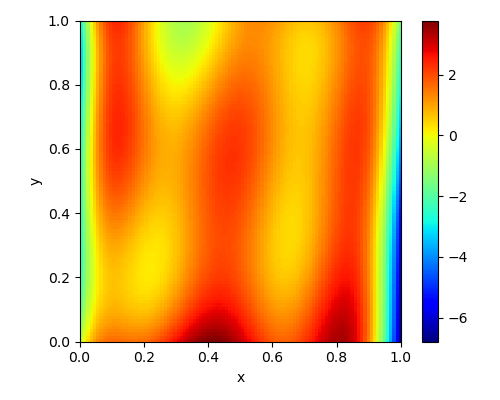}
        \caption{Stage 1}
    \end{subfigure}
    \hfill 
    \begin{subfigure}[b]{0.24\textwidth}
        \centering
        \includegraphics[width=\textwidth]{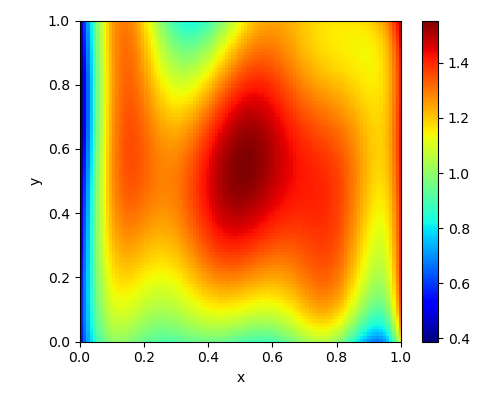}
        \caption{Stage 2}
    \end{subfigure}
    \hfill
    \begin{subfigure}[b]{0.24\textwidth}
        \centering
        \includegraphics[width=\textwidth]{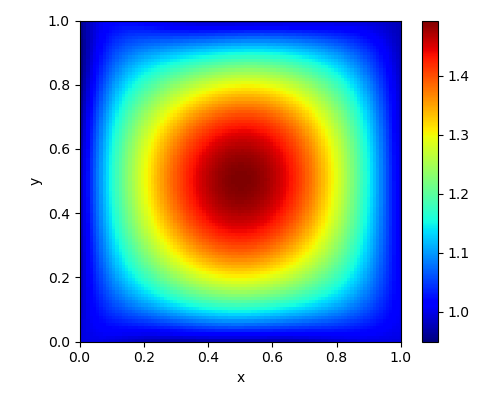}
        \caption{Stage 3}
    \end{subfigure}
    \hfill
    \begin{subfigure}[b]{0.24\textwidth}
        \centering
        \includegraphics[width=\textwidth]{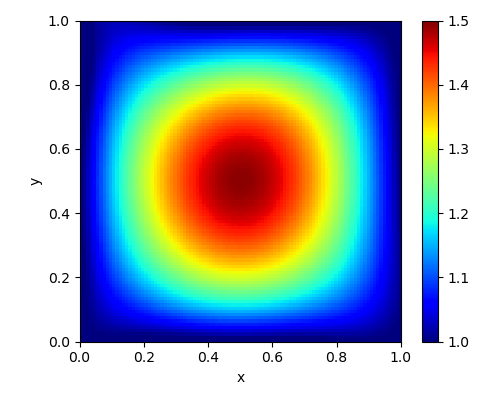}
        \caption{Stage 4}
    \end{subfigure}
    
    \caption{Reconstructed $b$ at different training stages for Example \ref{potential_example} with 1\% noise.}
    \label{potential:four_stages}
\end{figure}

\begin{table}[htbp]
    \centering
    \caption{Relative $L^2$ errors of the reconstructed $b$ and $u$ for Example \ref{potential_example} with 1\% noise.}
    \begin{tabular}{ccccccc} 
        \toprule
        Stage & 1 & 2 & 3 & 4 \\
        \midrule
        $err(b)$ & $9.21\times10^{-1}$ & $9.15\times10^{-2}$ & $6.28\times10^{-3}$ & $6.12\times10^{-3}$ \\
        $err(u)$ & $1.05\times10^{-2}$ & $6.94\times10^{-4}$ & $9.85\times10^{-5}$ & $6.09\times10^{-5}$\\
        \bottomrule
    \end{tabular}
    \label{potential_table1}
\end{table}

Figure \ref{potential:four_stages} illustrates the reconstructed potential $b$ at four distinct stages, and the pointwise absolute error of $u$ and $f$ is presented in Figure \ref{potential:result}. These results demonstrate the spectral bias where the neural network prioritizes capturing the global structure before refining high-frequency details. Table \ref{potential_table1} presents the relative $L^2$ errors for both the state $u$ and the potential $b$ under a $1\%$ noise level. The progressive decrease in error across stages confirms that our method achieves high accuracy and stability in low-noise scenarios.

\begin{figure}[ht!]
    \centering
    \begin{subfigure}[b]{0.24\textwidth}
        \centering
        \includegraphics[width=\textwidth]{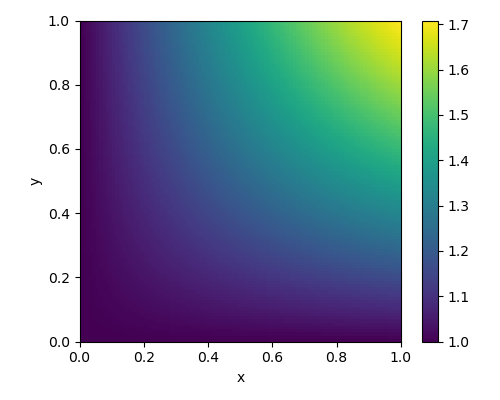}
        \caption{$u^{\dagger}$}
    \end{subfigure}
    \begin{subfigure}[b]{0.24\textwidth}
        \centering
        \includegraphics[width=\textwidth]{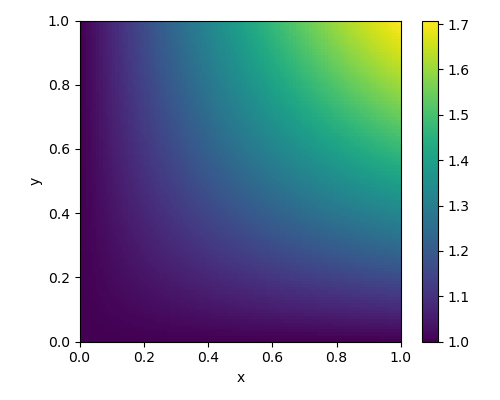}
        \caption{$u$}
    \end{subfigure}
    \begin{subfigure}[b]{0.24\textwidth}
        \centering
        \includegraphics[width=\textwidth]{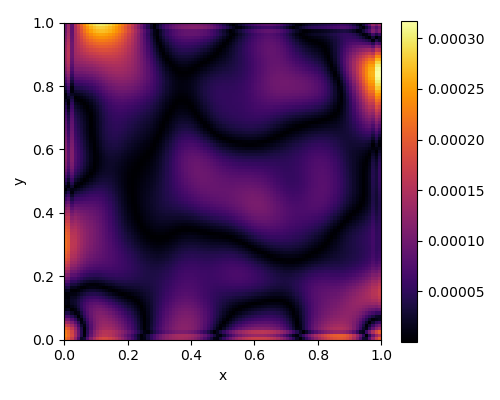}
        \caption{$|u - u^{\dagger}|$}
    \end{subfigure}

    \vspace{1em}

    \begin{subfigure}[b]{0.24\textwidth}
        \centering
        \includegraphics[width=\textwidth]{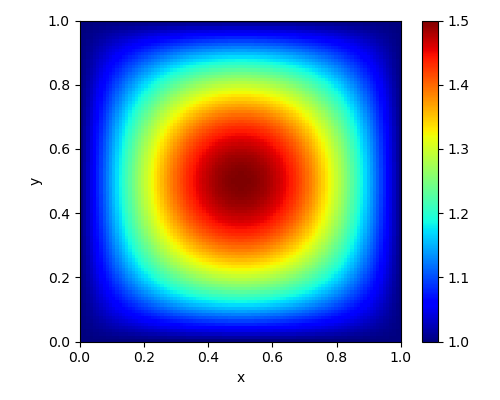}
        \caption{$b^{\dagger}$}
    \end{subfigure}
    \begin{subfigure}[b]{0.24\textwidth}
        \centering
        \includegraphics[width=\textwidth]{fig/potential/potential/f.png}
        \caption{$b$}
    \end{subfigure}
    \begin{subfigure}[b]{0.24\textwidth}
        \centering
        \includegraphics[width=\textwidth]{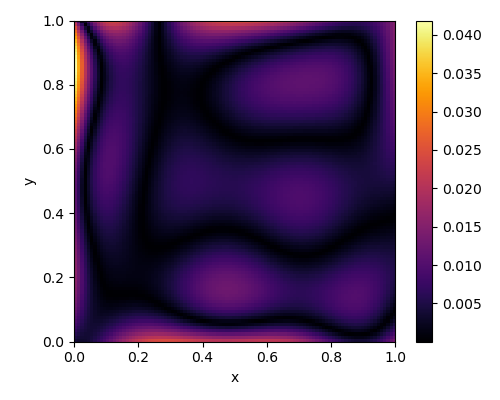}
        \caption{$|b - b^{\dagger}|$}
    \end{subfigure}

    \caption{Reconstruction results for Example \ref{potential_example} with 1\% noise.}
    \label{potential:result}
\end{figure}

\begin{table}[ht!]
    \centering
    \caption{Relative $L^2$ errors of Example \ref{potential_example} across different methods.}
    \begin{tabular}{ccccccccc} 
        \toprule
        Error & \multicolumn{3}{c}{$b$} & \multicolumn{3}{c}{$u$} \\

        \cmidrule(lr){2-4} \cmidrule(lr){5-7}

        $\delta$ & 20\% & 30\% & 50\% & 20\% & 30\% & 50\% \\
        \midrule

        ALBC & $1.46\times10^{-3}$ & $1.69\times10^{-2}$ & $1.59\times10^{-2}$ & $6.47\times10^{-4}$ & $7.12\times10^{-4}$ & $9.71\times10^{-4}$ \\
        PINNs & $1.61\times10^{-2}$ & $1.69\times10^{-2}$ & $1.96\times10^{-2}$  & $7.14\times10^{-4}$ & $9.67\times10^{-4}$ & $1.81\times10^{-3}$ \\
        \bottomrule
    \end{tabular}
    \label{potential_with_noise}
\end{table}

Furthermore, we examine the reconstruction performance under high-noise regimes. Table \ref{potential_with_noise} compares the accuracy of our proposed method with the PINNs baseline across various high-noise scenarios. The results reveal that our approach consistently achieves superior reconstruction fidelity.

Finally, Figure~\ref{potential:noise_loglog} displays the relative $L^2$ errors of $u$ and $b$ as a function of $\delta_{noise}$ on a log--log scale. Since $b$ is smooth ($2(p-4)/(p-2)\to2$ as $p\to\infty$), Theorem~\ref{thm:balance} predicts that both the state and parameter errors will decay at the rate of $\mathcal{O}(\delta_{noise})$. Remarkably, both error curves align well with straight lines possessing a slope of approximately $1$, perfectly confirming the theoretical predictions.

\begin{figure}[ht!]
    \centering
    \includegraphics[width=0.7\textwidth]{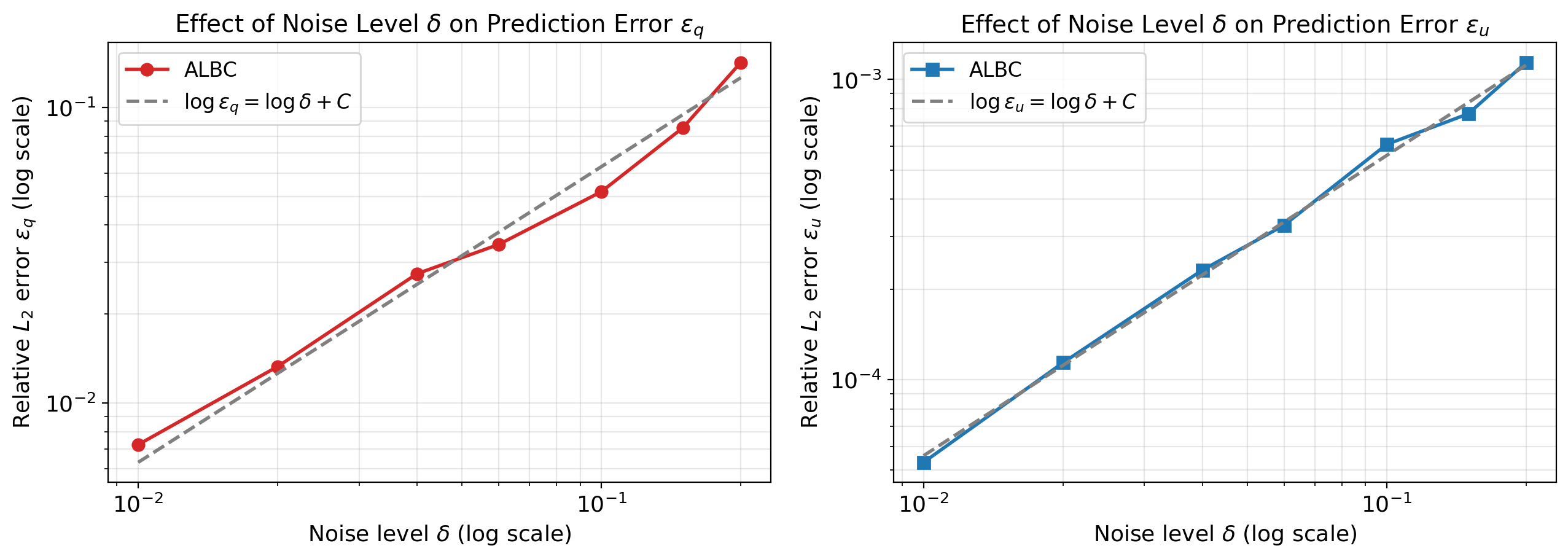}
    \caption{Relative $L^2$ errors of $u$ and $b$ versus the noise level $\delta_{noise}$ (log--log) for Example \ref{potential_example}; both slopes $\approx1$ match the $\mathcal{O}(\delta_{noise})$ rates.}
    \label{potential:noise_loglog}
\end{figure}

\subsection{Identification of diffusion coefficient}
\begin{example}\label{diffusion_example}
     (\cite{jin2024conductivity} Example 5.1) We set $\Omega=(-1,1)^2$, $b(x)\equiv0$ and $\mathcal{B}(u)=\frac{\partial u}{\partial \mathbf{n}}$ (Neumann boundary condition). Our objective is to recover the unknown conductivity $q(x)$ from noisy internal gradient measurements $\nabla z^\delta$. For the numerical simulation, we set the ground truth conductivity $q^\dagger$ as: $$q^\dagger(x) = 1 + \sum_{i=1}^3 s_i(x),$$where the component functions are defined as:
\begin{align*}
    s_1(x) &= 0.3 \exp\left(-20(x_1 - 0.3)^2 - 15(x_2 - 0.3)^2\right), \\
    s_2(x) &= -0.3 \exp\left(-10x_1^2 - 10(x_2 + 0.5)^2\right), \\
    s_3(x) &= 0.2 \exp\left(-15(x_1 + 0.4)^2 - 15(x_2 - 0.35)^2\right).
\end{align*}
The exact potential is chosen as the polynomial $u^\dagger(x) = x_1 + x_2 + \frac{1}{3}(x_1^3 + x_2^3)$, whose gradient $\nabla u^\dagger = (1+x_1^2, 1+x_2^2)$ satisfies $|\nabla u^\dagger| \ge \sqrt{2} > 0$ on $\bar{\Omega}$, ensuring the non-degeneracy condition in Assumption~2. The corresponding source term $f$ and boundary flux $g$ are derived by substituting $q^\dagger$ and $u^\dagger$ into the governing equation \eqref{eq:elliptic_system}. The observational data $\nabla z^\delta$ is generated by adding pointwise Gaussian noise to the exact gradient:
\begin{equation}\label{eq:grad_obs}
    \nabla z^\delta(x) = \nabla u^\dagger(x) + \delta \cdot \max_{y \in \Omega} \|\nabla u^\dagger(y)\|_{\infty} \cdot \iota(x),
\end{equation}
where $\delta$ is the relative noise level and $\iota(x) \sim \mathcal{N}(0, I)$ is standard Gaussian noise.
\end{example}

Figure \ref{diffusion:four_stages} and Table \ref{diffusion_table1} present the reconstructed $q$ and the corresponding accuracy across different stages under $1\%$ noise. These results indicate that the reconstruction of the coefficient $q$ consistently exhibits characteristic spectral bias and rapid initial convergence.

\begin{figure}[ht!]
    \centering
    \begin{subfigure}[b]{0.24\textwidth}
        \centering
        \includegraphics[width=\textwidth]{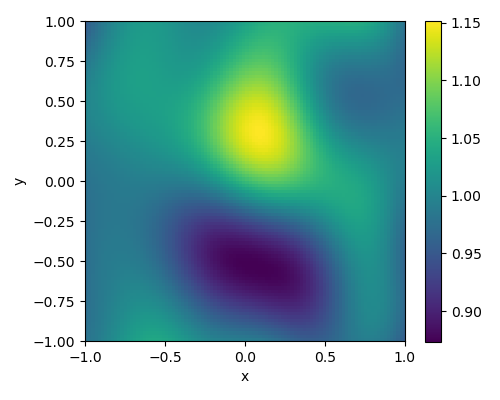}
        \caption{Stage 1}
        \label{diffusion:stage1}
    \end{subfigure}
    \hfill 
    \begin{subfigure}[b]{0.24\textwidth}
        \centering
        \includegraphics[width=\textwidth]{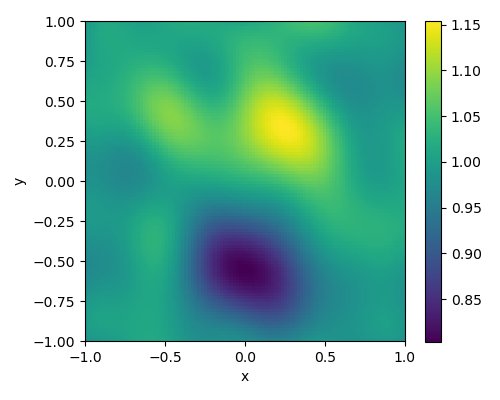}
        \caption{Stage 2}
        \label{diffusion:stage2}
    \end{subfigure}
    \hfill
    \begin{subfigure}[b]{0.24\textwidth}
        \centering
        \includegraphics[width=\textwidth]{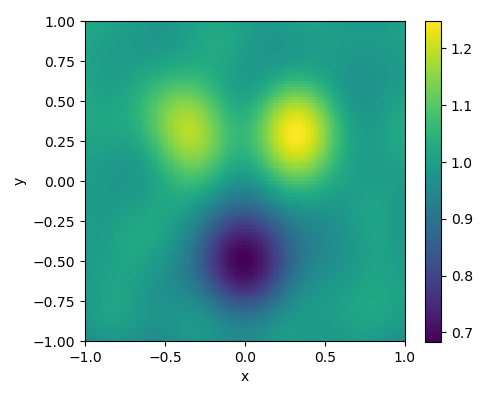}
        \caption{Stage 3}
        \label{diffusion:stage3}
    \end{subfigure}
    \hfill
    \begin{subfigure}[b]{0.24\textwidth}
        \centering
        \includegraphics[width=\textwidth]{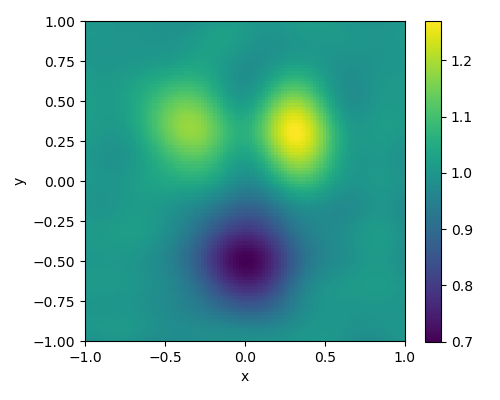}
        \caption{Stage 4}
        \label{diffusion:stage4}
    \end{subfigure}
    
    \caption{Reconstructed $q$ at different training stages for Example \ref{diffusion_example} under 1\% noise level.}
    \label{diffusion:four_stages}
\end{figure}

\begin{table}[ht!]
    \centering
    \caption{Relative $L^2$ errors for Example \ref{diffusion_example} at different training stages with 1\% noise.}
    \begin{tabular}{ccccccc} 
        \toprule
        Stage & 1 & 2 & 3 & 4 & 5 & 6 \\
        \midrule
        $err(q)$ & $5.07\times10^{-2}$ & $3.62\times10^{-2}$ & $1.51\times10^{-2}$ & $1.20\times10^{-2}$ & $1.06\times10^{-2}$ & $9.80\times10^{-3}$\\
        \bottomrule
    \end{tabular}
    \label{diffusion_table1}
\end{table}

Figure \ref{diffusion:error} displays the exact coefficient $q$, the reconstructed coefficient, and the corresponding absolute error map at the final training stage under $1\%$ noise level. The error distribution exhibits an oscillatory pattern, which validates the necessity of our spectral-based initialization and adaptive sampling strategies, while also indicating room for further refinement. For a comprehensive comparison, Table \ref{diffusion_with_noise} summarizes the accuracy and computational costs of our method alongside the Mixed DNN method \cite{jin2024conductivity} across various noise levels. The results demonstrate that ALBC consistently achieves higher precision while requiring significantly less computational time than the Mixed DNN.

\begin{figure}[ht!]
    \centering
    \begin{subfigure}[b]{0.24\textwidth}
        \centering
        \includegraphics[width=\textwidth]{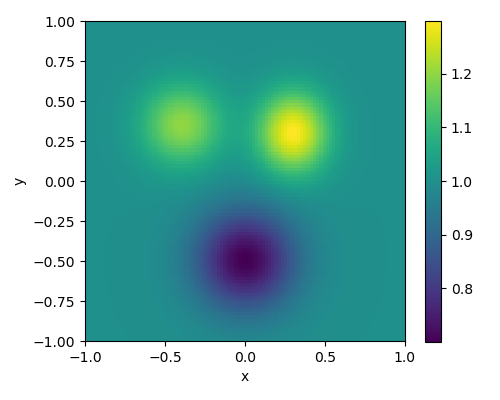}
        \caption{$q$}
    \end{subfigure}
    \begin{subfigure}[b]{0.24\textwidth}
        \centering
        \includegraphics[width=\textwidth]{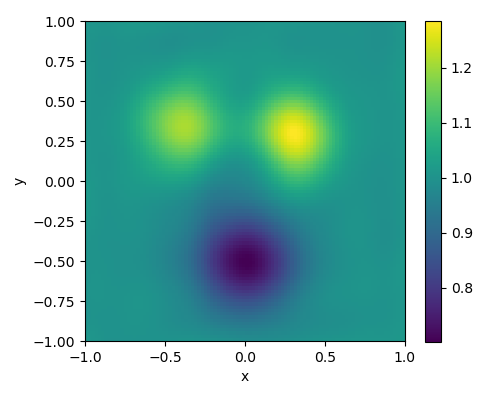}
        \caption{$q^{\dagger}$}
    \end{subfigure}
    \begin{subfigure}[b]{0.24\textwidth}
        \centering
        \includegraphics[width=\textwidth]{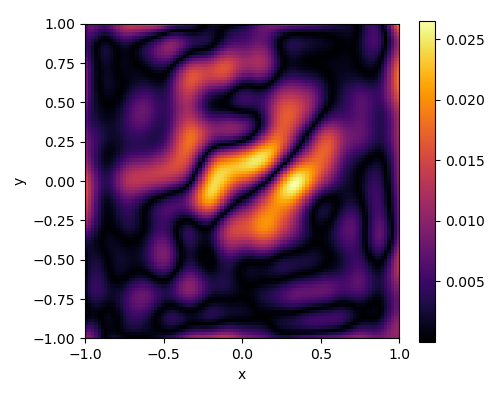}
        \caption{$|q-q^{\dagger}|$}
    \end{subfigure}

    \caption{Reconstruction results for Example \ref{diffusion_example} with 1\% noise.}
    \label{diffusion:error}
\end{figure}

\begin{table}[th!]
    \centering
    \caption{Relative $L^2$ errors for Example \ref{diffusion_example}across different methods.}
    \begin{tabular}{ccccc} 
        \toprule
        Method & \multicolumn{3}{c}{Error} & \multirow{2}{*}{Times} \\
        \cmidrule(lr){2-4}
        
        $\delta$ & 0\% & 1\% & 10\% & \\ 
        \midrule
        
        ALBC & $7.67\times10^{-3}$ & $7.77\times10^{-3}$ & $1.97\times10^{-2}$ & 88.4 \\
        Mixed DNN & $1.01\times10^{-2}$ & $8.52\times10^{-3}$ & $4.30\times10^{-2}$ & 374.4 \\ 
        \bottomrule
    \end{tabular}
    \label{diffusion_with_noise}
\end{table}

\subsection{Current Density Impedance Imaging}

Although the theoretical analysis in Section \ref{sec_analysis} focuses on linear observation operators, our alternating collocation framework naturally accommodates bilinear or nonlinear observations. By decoupling the joint inversion, the observation nonlinearity is effectively bypassed at each individual step. As a representative example, we consider Current Density Impedance Imaging (CDII), where the observation $|J| = q|\nabla u|$ is nonlinear with respect to $(q, u)$ but conditionally linear in $q$ when $u$ is fixed. Exploiting this property, we prioritize augmenting the basis for $q$. Specifically, at the $k$-th stage of ALBC, we first update the parameter basis $\psi_k$ using the surrogate observation $\frac{|J|}{|\nabla u_{k-1}|}$ alongside the PDE residual. Subsequently, we update the state basis $\phi_k$ by minimizing \eqref{u_unnorm}, omitting the data fidelity term $\mathcal{R}_{\mathrm{data}}^{(k)}$ during this step.

\begin{example}\label{eit_example}(\cite{nachman2009recovering}, \cite{duan2024current} Example 5.1)
    We consider the Current Density Impedance Imaging (CDII) problem\cite{nachman2009recovering}: $\nabla \cdot (q \nabla u) = 0$ in $\Omega$, $u = g$ on $\partial\Omega$, where $q$ is the unknown conductivity (with $b \equiv 0$ and $f \equiv 0$). The observation data is the current density magnitude $|J| = q |\nabla u|$. The ground truth conductivity is $q^\dagger = 1 + 0.3(G_1 - G_2 - G_3)$, where
$$\begin{aligned}
G_1 &= 0.3(1 - 3\bar{x})^2 \exp\left[-9\bar{x}^2 - (6y-2)^2\right], \\
G_2 &= \left(\tfrac{3\bar{x}}{5} - 27\bar{x}^3 - (3(2y-1))^5\right) \exp\left[-9\bar{x}^2 - 9(2y-1)^2\right], \\
G_3 &= \exp\left[-(3\bar{x}+1)^2 - 9(2y-1)^2\right],
\end{aligned}$$
with $\bar{x} = 2x - 1$.
\end{example}

We initially evaluate the proposed method by recovering the potential $u$ and conductivity $q$ from data containing 2\% noise. Figure \ref{eit:four_stages} illustrates the conductivity reconstructions across various stages. The results demonstrate that while a rough approximation is obtainable after only a few stages, increasing the stage count is essential for capturing intricate structural details. Table \ref{eit_table_stage} presents the relative $L^2$ errors of the potential and conductivity at different stages. We observe that the reconstruction accuracy for both $q$ and $u$ improves progressively as the stages increase. 

Figure \ref{fig:eit_error} illustrates the predicted values and error maps for $q$ and $u$ at Stage 7. It is evident from the figure that the error distributions for both potential and conductivity exhibit distinct wave-like patterns. This observation further validates the rationality of our frequency-based initialization and sampling strategy.

\begin{table}[ht!]
    \centering
    \caption{Relative $L^2$ errors for reconstructing $q$ and $u$ for Example \ref{eit_example} with 2\% noise.}
    \begin{tabular}{ccccccc} 
        \toprule
        Stage & 1 & 2 & 3 & 4 & 5 & 6\\
        \midrule
        $err(q)$ & $4.7\times10^{-2}$ & $4.2\times10^{-2}$ & $3.5\times10^{-2}$ & $3.1\times10^{-2}$& $2.9\times10^{-2}$ & $2.7\times10^{-2}$ \\
        $err(u)$ & $8.0\times10^{-3}$ & $5.8\times10^{-3}$ & $4.5\times10^{-3}$ &  $4.0\times10^{-3}$ &$3.3\times10^{-3}$ & $3.0\times10^{-3}$ \\
        \bottomrule
    \end{tabular}
    \label{eit_table_stage}
\end{table}

\begin{figure}[ht!]
    \centering
    \begin{subfigure}[b]{0.24\textwidth}
        \centering
        \includegraphics[width=\textwidth]{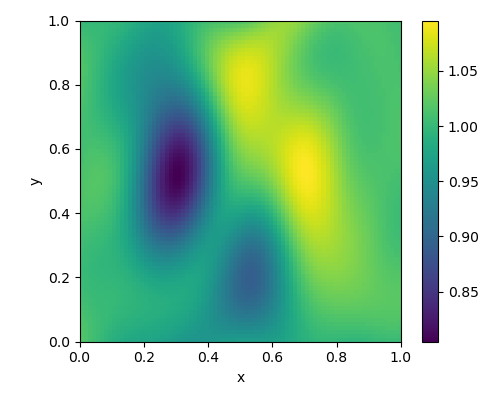}
        \caption{Stage=1}
        \label{eit:stage1}
    \end{subfigure}
    \hfill 
    \begin{subfigure}[b]{0.24\textwidth}
        \centering
        \includegraphics[width=\textwidth]{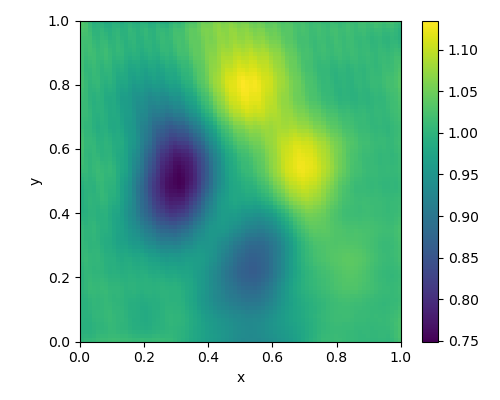}
        \caption{Stage=3}
        \label{eit:stage2}
    \end{subfigure}
    \hfill
    \begin{subfigure}[b]{0.24\textwidth}
        \centering
        \includegraphics[width=\textwidth]{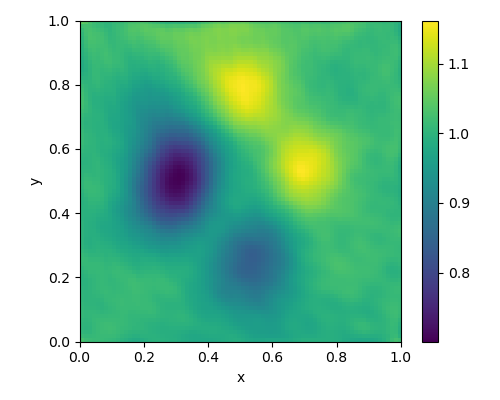}
        \caption{Stage=5}
        \label{eit:stage3}
    \end{subfigure}
    \hfill
    \begin{subfigure}[b]{0.24\textwidth}
        \centering
        \includegraphics[width=\textwidth]{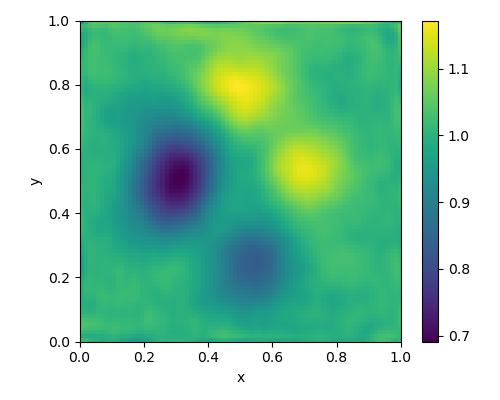}
        \caption{Stage=7}
        \label{eit:stage4}
    \end{subfigure}
    
    \caption{The reconstructed conductivity at different stages for Example \ref{eit_example} with 2\% noise.}
    \label{eit:four_stages}
\end{figure}

To evaluate the performance of our proposed method, we compare it against the Weighted Least Gradient Method (WLGM) \cite{nachman2009recovering}. Table \ref{eit_table_noise} presents the relative errors of the potential and conductivity reconstructed from current density magnitude data under varying noise levels. The results indicate that while the relative errors for the potential $u$ are comparable between the two methods, our approach significantly outperforms WLGM in conductivity reconstruction. Furthermore, our method exhibits remarkable robustness, maintaining its advantage particularly as the noise level increases.

\begin{figure}[ht!]
    \centering
    \begin{subfigure}[b]{0.24\textwidth}
        \centering
        \includegraphics[width=\textwidth]{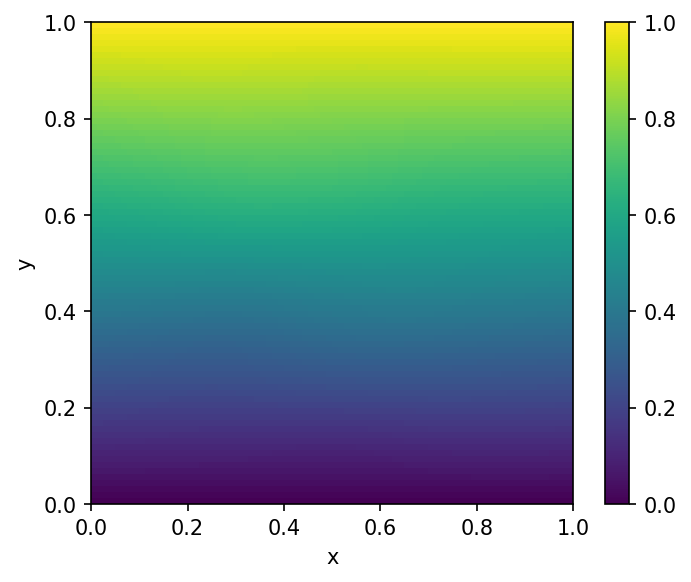}
        \caption{$u^{\dagger}$}
    \end{subfigure}
    \begin{subfigure}[b]{0.24\textwidth}
        \centering
        \includegraphics[width=\textwidth]{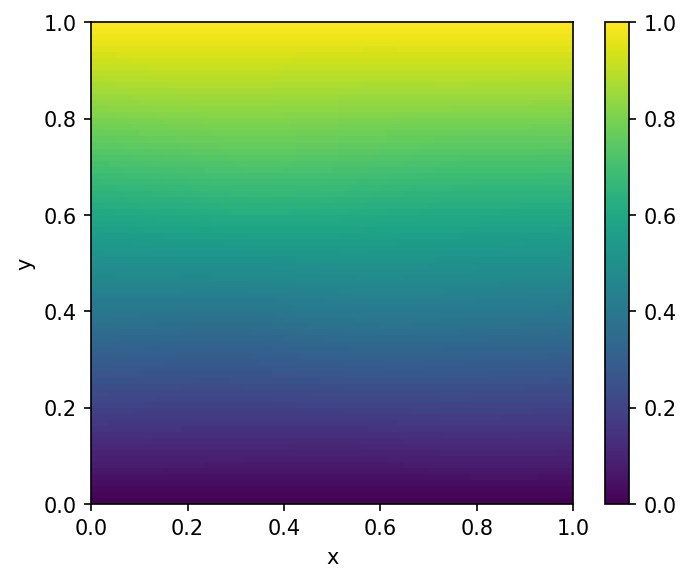}
        \caption{$u$}
    \end{subfigure}
    \begin{subfigure}[b]{0.24\textwidth}
        \centering
        \includegraphics[width=\textwidth]{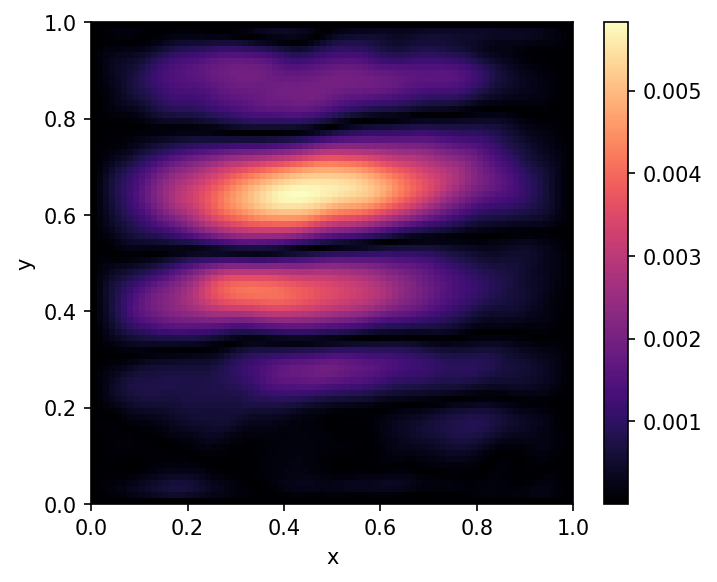}
        \caption{$|u - u^{\dagger}|$}
    \end{subfigure}

    \vspace{1em}

    \begin{subfigure}[b]{0.24\textwidth}
        \centering
        \includegraphics[width=\textwidth]{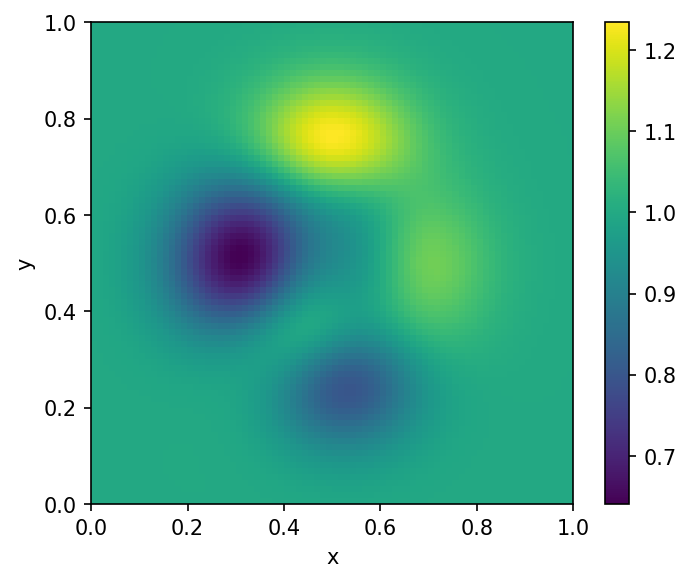}
        \caption{$q^{\dagger}$}
    \end{subfigure}
    \begin{subfigure}[b]{0.24\textwidth}
        \centering
        \includegraphics[width=\textwidth]{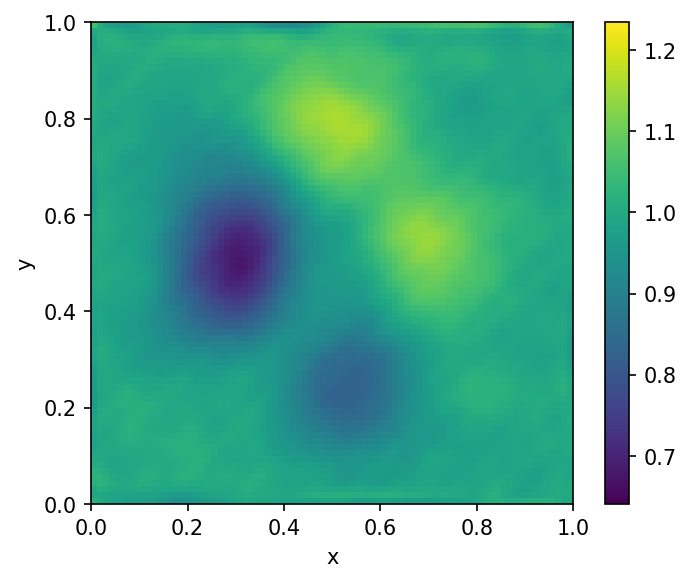}
        \caption{$q$}
    \end{subfigure}
    \begin{subfigure}[b]{0.24\textwidth}
        \centering
        \includegraphics[width=\textwidth]{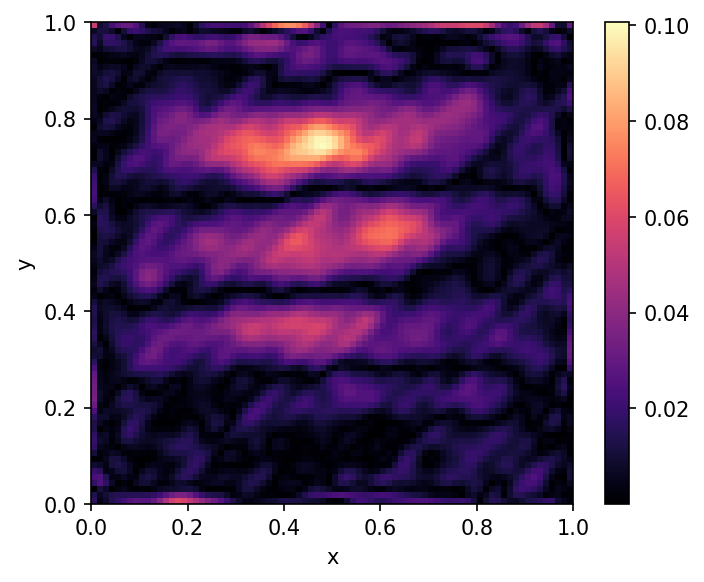}
        \caption{$|q - q^{\dagger}|$}
    \end{subfigure}

    \caption{Reconstruction results for Example \ref{eit_example} with 2\% noise.}
    \label{fig:eit_error}
\end{figure}

\begin{table}[ht!]
    \centering
    \caption{Relative $L_2$ errors for reconstructing $q$ and $u$ across different methods for Example \ref{eit_example}.}
    \begin{tabular}{ccccccc} 
        \toprule
        Method &  & ALBC &  &   & WLGM & \\
        \midrule
        $\delta$ & 0.1\% & 1\%  & 10\% & 0.1\% & 1\% & 10\% \\
        \midrule
        $err(q)$ & $2.37\times10^{-2}$ &  $2.47\times10^{-2}$ & $3.93\times10^{-2}$ &$5.2\times10^{-2}$ & $5.5\times10^{-2}$ & $1.4\times10^{-1}$ \\
        $err(u)$ & $2.3\times10^{-3}$ & $2.4\times10^{-3}$ & $4.2\times10^{-3}$ &$2.3\times10^{-3}$ & $2.5\times10^{-3}$ & $5.5\times10^{-2}$\\
        \bottomrule
    \end{tabular}
    \label{eit_table_noise}
\end{table}

\subsection{High-Dimensional Problem}
\begin{example}(\cite{jin2024conductivity} Example 5.10)
We now consider the inversion of the diffusion coefficient $q(\mathbf{x})$ in the 5D unit hypercube $\Omega = (0, 1)^5$. Setting $b(\mathbf{x})\equiv 0$, the system is subject to Dirichlet boundary conditions, with both the boundary values and the source term $f$ analytically derived from the exact solutions. Employing the observation operator defined in \eqref{eq:grad_obs}, the exact parameter $q^\dagger(\mathbf{x})$ and state variable $u^\dagger(\mathbf{x})$ are given by:
\begin{align*}
    q^\dagger(\mathbf{x}) &= 1 + 0.5(x_1 x_5 + x_2 x_4 + x_3^2) - 0.3\exp\left(-25(x_1-0.5)^2 - 25(x_2-0.5)^2\right), \\
    u^\dagger(\mathbf{x}) &= \sum_{i=1}^5 \left(x_i + \frac{1}{3}x_i^3\right).
\end{align*}
    \label{5D_example}
\end{example}

\begin{table}[ht]
    \centering
    \caption{Relative $L^2$ errors for reconstructing $q$ at different stages for Example \ref{5D_example} without noise.}
    \begin{tabular}{cccccccc} 
        \toprule
        Stage & 1 & 2 & 4 & 6 & 8 & 12 & 16\\
        \midrule
        $err(q)$ & $1.0\times10^{-1}$ & $3.9\times10^{-2}$ & $2.7\times10^{-2}$ & $2.4\times10^{-2}$& $1.9\times10^{-2}$ & $1.2\times10^{-2}$ & $9.7\times10^{-3}$\\
        \bottomrule
    \end{tabular}
    \label{5D_table_stage}
\end{table}

\begin{figure}[ht!]
    \centering
    \begin{subfigure}[b]{0.24\textwidth}
        \centering
        \includegraphics[width=\textwidth]{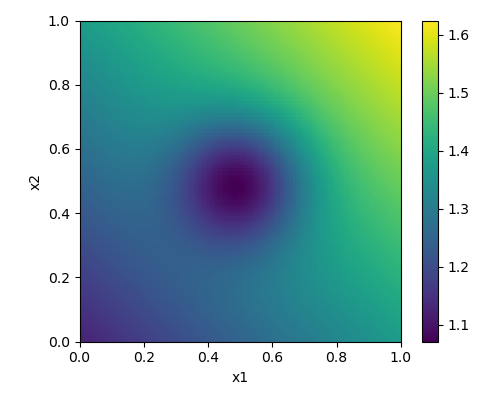}
        \caption{$q$}
    \end{subfigure}
    \hfill 
    \begin{subfigure}[b]{0.24\textwidth}
        \centering
        \includegraphics[width=\textwidth]{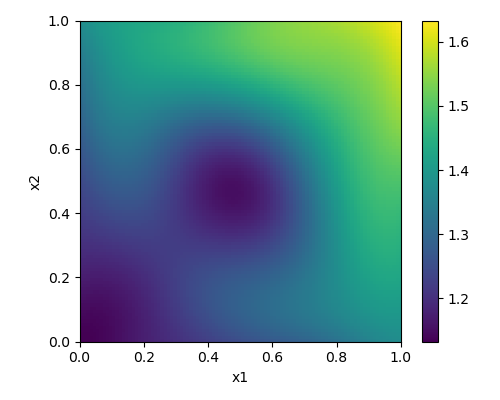}
        \caption{$q^{\dagger}$}
    \end{subfigure}
    \hfill
    \begin{subfigure}[b]{0.24\textwidth}
        \centering
        \includegraphics[width=\textwidth]{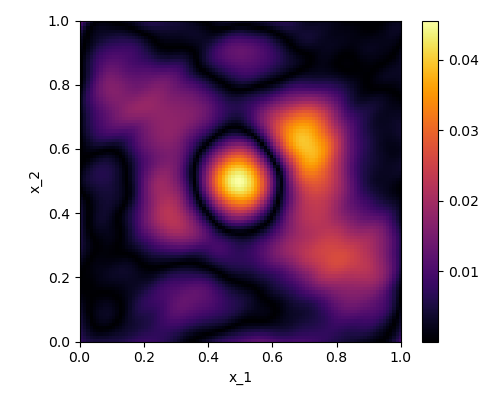}
        \caption{$|q-q^{\dagger}|$}
    \end{subfigure}
    \hfill
    \begin{subfigure}[b]{0.24\textwidth}
        \centering
        \includegraphics[width=\textwidth]{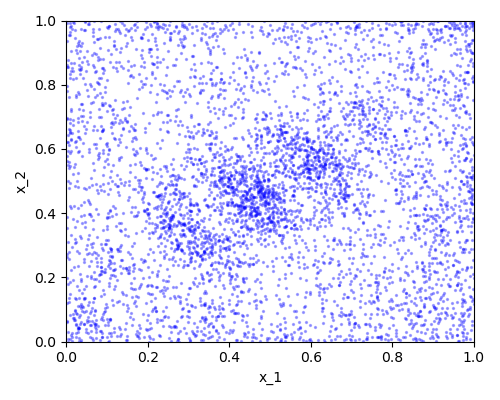}
        \caption{Sampling points for $q$}
    \end{subfigure}

    \centering
    \begin{subfigure}[b]{0.24\textwidth}
        \centering
        \includegraphics[width=\textwidth]{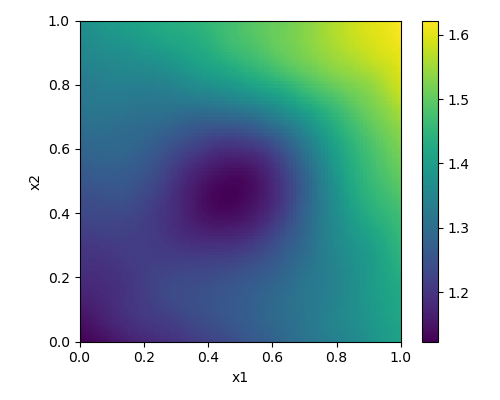}
        \caption{$q$}
    \end{subfigure}
    \hfill 
    \begin{subfigure}[b]{0.24\textwidth}
        \centering
        \includegraphics[width=\textwidth]{fig/5D/0_noise/true_q.png}
        \caption{$q^{\dagger}$}
    \end{subfigure}
    \hfill
    \begin{subfigure}[b]{0.24\textwidth}
        \centering
        \includegraphics[width=\textwidth]{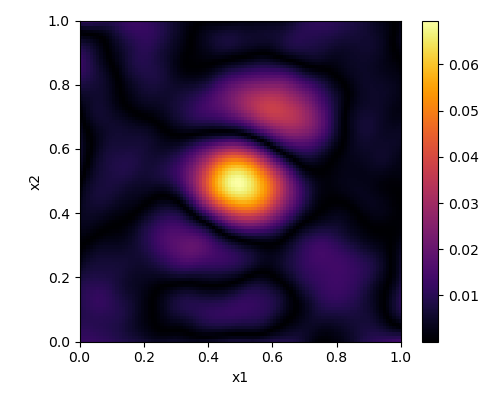}
        \caption{$|q-q^{\dagger}|$}
    \end{subfigure}
    \hfill
    \begin{subfigure}[b]{0.24\textwidth}
        \centering
        \includegraphics[width=\textwidth]{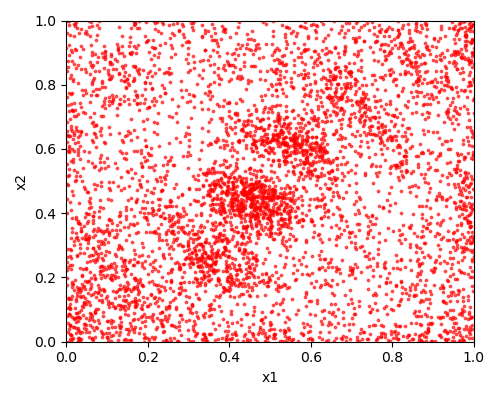}
        \caption{Sampling points for $q$}
    \end{subfigure}

       \centering
    \begin{subfigure}[b]{0.24\textwidth}
        \centering
        \includegraphics[width=\textwidth]{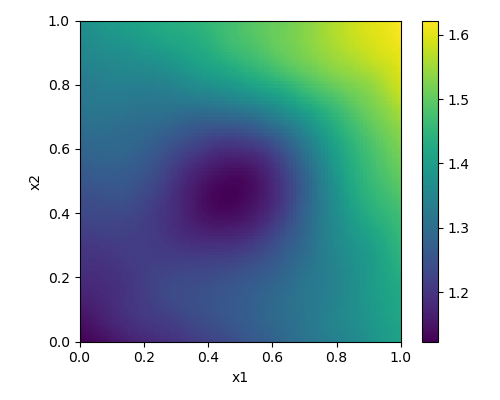}
        \caption{$q$}
    \end{subfigure}
    \hfill 
    \begin{subfigure}[b]{0.24\textwidth}
        \centering
        \includegraphics[width=\textwidth]{fig/5D/0_noise/true_q.png}
        \caption{$q^{\dagger}$}
    \end{subfigure}
    \hfill
    \begin{subfigure}[b]{0.24\textwidth}
        \centering
        \includegraphics[width=\textwidth]{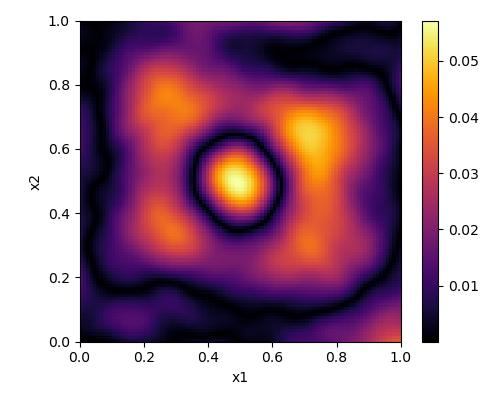}
        \caption{$|q-q^{\dagger}|$}
    \end{subfigure}
    \hfill
    \begin{subfigure}[b]{0.24\textwidth}
        \centering
        \includegraphics[width=\textwidth]{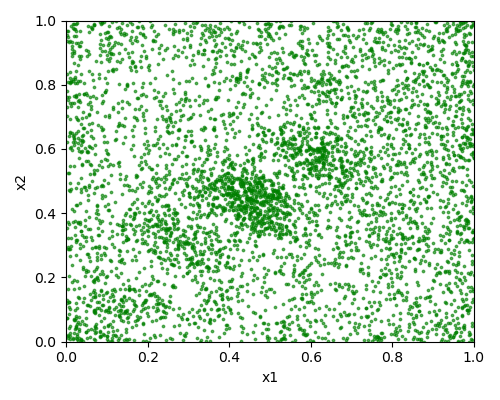}
        \caption{Sampling points for $q$}
    \end{subfigure}
    
    \caption{Reconstruction results for Example \ref{5D_example} with noise levels of 0\%, 10\%, and 20\% from top to bottom.}
    \label{fig:5D_error}
\end{figure}

\begin{table}[ht!]
    \centering
    \caption{Relative $L^2$ errors for reconstructing $q$ across different methods for Example \ref{5D_example}.}
    \begin{tabular}{ccccccccc}
        \toprule
        Method & \multicolumn{4}{c}{ALBC} & \multicolumn{3}{c}{Mixed PINN} \\
        
        \cmidrule(lr){2-5} \cmidrule(lr){6-8} 
        
        $\delta$ & 0\% & 10\% & 20\% & Time & 0\% & 10\% & Time \\
        \midrule
        
        Error & $9.70\times10^{-3}$ & $1.07\times10^{-2}$ & $1.57\times10^{-2}$& 56s & $5.78\times10^{-3}$ & $2.86\times10^{-2}$  & 962s \\
        
        \bottomrule
    \end{tabular}
    \label{5D_with_noise}
\end{table}

\begin{figure}[ht!]
    \centering
    \includegraphics[width=0.8\linewidth]{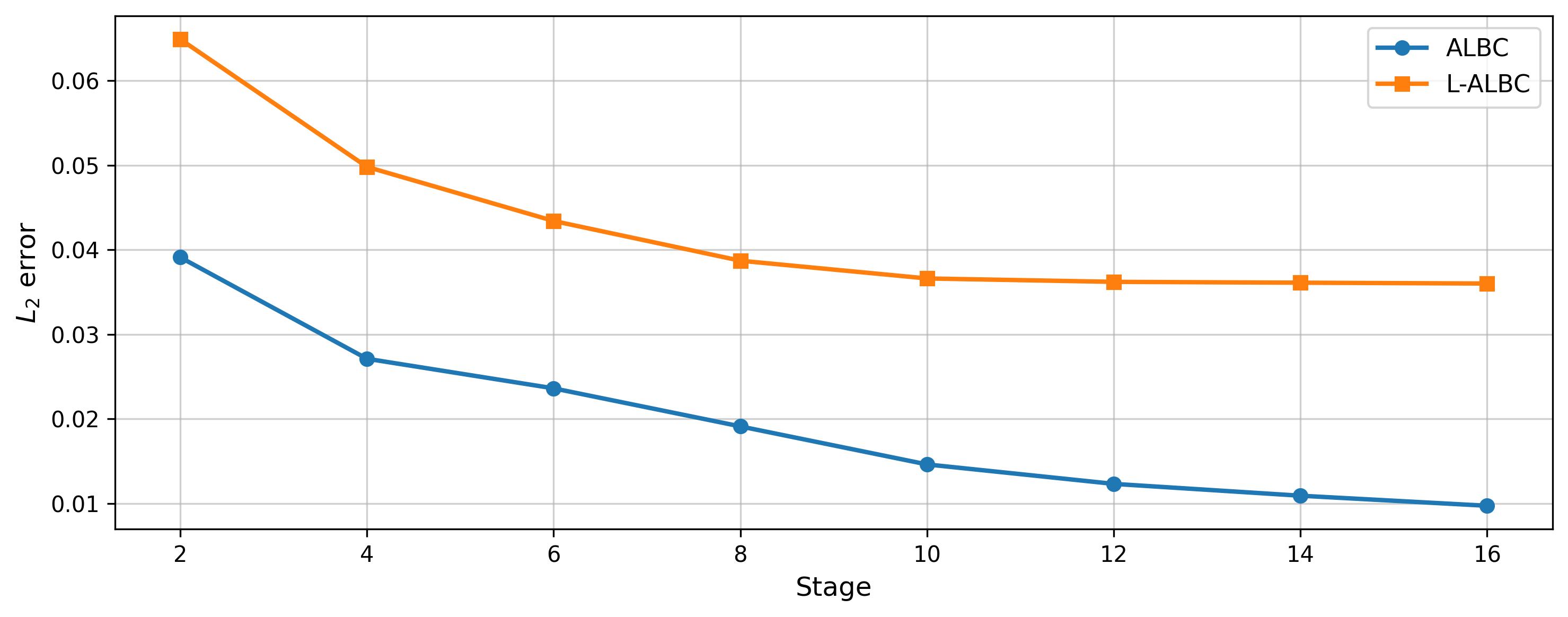}
    \caption{Comparison of the relative $L^2$ error across different stages for Example \ref{5D_example}. The plot compares our complete method against the variant without fine-tuning.}
    \label{fine-tuning}
\end{figure}

Table \ref{5D_table_stage} presents the relative $L^2$ errors for $q$ at different stages in the noise-free setting, while Figure \ref{fig:5D_error} illustrates the reconstructed $q$, the ground truth, the absolute error, and the distribution of sampling points under three noise levels. As observed, the error decreases steadily with successive stages. Furthermore, our method achieves an accurate reconstruction of $q$ across all tested noise levels, thereby confirming the effectiveness and robustness of the alternating strategy in high-dimensional settings. For a quantitative benchmark, Table \ref{5D_with_noise} compares our approach against the Mixed PINN baseline. Although Mixed PINNs yield a slightly lower error in the strictly noise-free case, our method demonstrates superior robustness under $10\%$ noise. More importantly, our approach requires only a fraction of the computational time ($56$s versus $962$s), thereby highlighting its exceptional scalability and noise resilience in high-dimensional inverse problems.

Finally, we utilize this example to verify the necessity of the joint fine-tuning phase in ALBC. As illustrated in Figure \ref{fine-tuning}, ALBC exhibits a significantly faster convergence in the $L^2$ error as the number of stages increases. Notably, even after 16 stages, ALBC continues to converge towards a lower error, whereas the error of L-ALBC plateaus.

\section{Conclusion}\label{sec1_conclusion}

In this paper, we introduced the Alternating Learning-Based Collocation (ALBC) method, which integrates alternating iterative strategies with sinusoidal-activated shallow neural networks for solving inverse elliptic problems. By employing shallow networks as adaptive basis generators within a collocation framework, the method alternately constructs dedicated basis sets for the state variable $u$ and the unknown parameter $\lambda$, effectively decoupling the non-convex joint optimization into tractable linear subproblems. Rigorous theoretical guarantees, including stability estimates and convergence analysis, have been established. Extensive numerical experiments across five representative inverse problems (source identification, potential reconstruction, diffusion coefficient recovery, electrical impedance tomography, and a five-dimensional problem) demonstrate that the ALBC method achieves high reconstruction accuracy with significantly lower computational costs compared to PINNs and classical RBF-based collocation methods. Looking forward, our future research will primarily focus on three directions. First, we plan to extend the ALBC framework to tackle more complex nonlinear inverse problems. Second, we aim to establish an observable numerical analysis theory. Finally, we intend to further refine the theoretical foundations of the adaptive sampling strategy to provide deeper mathematical insights into its efficiency and convergence properties.

\appendix

\appendix

\section{Auxiliary lemmas}

\begin{lemma}[Universal Approximation]\label{UOT}
\cite{hornik1991approximation}
     Suppose the activation function $\sigma$ is smooth, bounded, and non-constant. Then, for any target function $f$ belonging to a Sobolev space $W^{m,p}(\Omega)$, the sequence of functions generated by shallow neural networks converges to $f$ in the $W^{m,p}$ norm as the network width approaches infinity. In other words, the space of shallow neural networks is dense in $W^{m,p}(\Omega)$.
\end{lemma}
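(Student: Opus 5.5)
This result is the classical density theorem of Hornik, and the paper cites it rather than proves it. I would therefore reconstruct Hornik's argument, which is a duality argument: show that the only continuous linear functional on $W^{m,p}(\Omega)$ vanishing on every ridge function $x\mapsto\sigma(\boldsymbol{\omega}\cdot x+b)$ is zero. The Hahn--Banach theorem then gives density of the linear span, and that span is exactly the set of shallow networks of arbitrary width. I take $1\le p<\infty$, as is implicit in the statement, and $\Omega$ bounded, so $\overline{\Omega}$ is compact.

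\textbf{Step 1: reduce to $C^m(\overline{\Omega})$.} On bounded domains with the regularity assumed in Section~\ref{subsec:problem}, smooth functions are dense in $W^{m,p}(\Omega)$. Moreover, $\|g\|_{W^{m,p}}\le C\|g\|_{C^m(\overline{\Omega})}$ because $\Omega$ has finite measure. So it suffices to approximate an arbitrary $h\in C^\infty(\overline{\Omega})$ in the $C^m(\overline{\Omega})$ norm, meaning uniformly together with all derivatives up to order $m$.

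\textbf{Step 2: describe annihilating functionals.} Suppose a continuous functional $\Lambda$ on $C^m(\overline{\Omega})$ annihilates all ridge functions. By Riesz representation applied to the derivative map into a product of continuous-function spaces, $\Lambda(g)=\sum_{|\alpha|\le m}\int \partial^\alpha g\,d\mu_\alpha$ for finite signed Borel measures $\mu_\alpha$ on $\overline{\Omega}$. Applying this to $g=\sigma(\boldsymbol{\omega}\cdot x+b)$ gives
\[
0=\sum_{|\alpha|\le m}\boldsymbol{\omega}^\alpha\int \sigma^{(|\alpha|)}(\boldsymbol{\omega}\cdot x+b)\,d\mu_\alpha(x)\quad\text{for all }\boldsymbol{\omega},b.
\]

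\textbf{Step 3: show $\Lambda=0$ (main obstacle).} Here smoothness, boundedness and nonconstancy of $\sigma$ are all needed.
\begin{itemize}
\item First reduce to a single measure. Replace $\boldsymbol{\omega}$ by $t\boldsymbol{\omega}$ and note that the $|\alpha|=k$ terms are homogeneous of degree $k$ in $t$. Using the rescaling $\sigma(t\cdot)$, one separates the orders, or alternatively argues by induction from the top order downward.
\item Next, since $\sigma$ is bounded and nonconstant, it is not a polynomial. Mollification together with translations and dilations of $\sigma^{(k)}$ then lets one approximate any function of one variable, in the style of Cybenko and Leshno--Pinkus.
\item Hence the vanishing identity extends to all ridge functions $e^{i\boldsymbol{\omega}\cdot x}$. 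The Fourier transforms of the relevant measures therefore vanish, so the measures themselves vanish.
\end{itemize}
I would handle the separation of derivative orders carefully. The one-dimensional density of $\{\sigma(a s+b)\}$ in $C^m$ on compact intervals is the key auxiliary fact; I would prove it by differentiating with respect to $a$ to generate the polynomials $s^k\sigma^{(k)}(b)$. This step requires some $\sigma^{(k)}(b)\ne0$ for each $k$, and that property holds for non-polynomial smooth $\sigma$.

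\textbf{Step 4: conclude.} The span of ridge functions is dense in $C^m(\overline{\Omega})$. Combined with Step 1, it is dense in $W^{m,p}(\Omega)$. So there is a sequence of networks, of increasing width $n$, whose $W^{m,p}$ error tends to zero, which is the statement.
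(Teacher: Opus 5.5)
The paper gives no proof of this lemma; it only cites Hornik (1991), so your reconstruction has to stand on its own. Its core is sound, but the first bullet of Step~3 is false as stated and should be deleted rather than ``handled carefully.''

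The measures $\mu_\alpha$ representing a functional on $C^m(\overline{\Omega})$ are far from unique. Nothing therefore forces the contributions of different derivative orders to vanish separately, and your closing claim that ``the measures themselves vanish'' is not available either. Concretely, take $d=m=1$ and $\Omega=(0,1)$. The pair $\mu_0=\delta_1-\delta_0$, $\mu_1=-\mathrm{d}x$ on $[0,1]$ represents the zero functional $g\mapsto g(1)-g(0)-\int_0^1 g'(x)\,\mathrm{d}x$. For $g(x)=\sin(\omega x+b)$, the order-$0$ contribution is $\sin(\omega+b)-\sin b$ and the order-$1$ contribution is exactly its negative. Neither vanishes (take $\omega=\pi/2$, $b=0$). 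So neither separation of orders nor induction from the top order can work, and $\mu_0,\mu_1\neq0$ although $\Lambda=0$. Your homogeneity remark is also inaccurate: after $\boldsymbol{\omega}\mapsto t\boldsymbol{\omega}$ only the prefactor is $t^k$, since the integrand $\sigma^{(k)}(t\boldsymbol{\omega}\cdot x+b)$ depends on $t$ too.

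The repair is already in your proposal. The one-dimensional density of $\operatorname{span}\{\sigma(as+b)\}$ in $C^m(J)$ treats all orders at once, and your sketch of it is correct:
\begin{itemize}
\item the $k$-th difference quotients in $a$ at $a=0$ converge in $C^m(J)$ to $s^k\sigma^{(k)}(b)$, because $\sigma\in C^\infty$;
\item a bounded nonconstant function is not a polynomial, so $\sigma^{(k)}(b)\neq0$ for some $b$;
\item polynomials are dense in $C^m(J)$.
\end{itemize}
Now fix a unit vector $\theta$ and a compact interval $J\supseteq\theta\cdot\overline{\Omega}$. The map $g\mapsto\Lambda(g(\theta\cdot x))$ is continuous on $C^m(J)$, because $\partial^\alpha[g(\theta\cdot x)]=\theta^\alpha g^{(|\alpha|)}(\theta\cdot x)$, and it kills every $\sigma(as+b)$. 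Hence it kills all of $C^m(J)$, so $\Lambda$ (extended complex-linearly) annihilates $e^{i\boldsymbol{\omega}\cdot x}$ for every $\boldsymbol{\omega}$. This says the compactly supported distribution $T=\sum_{|\alpha|\le m}(-\partial)^\alpha\mu_\alpha$ has zero Fourier transform, so $T=0$. Therefore $\Lambda(h)=T(h)=0$ for your $h$, taken (as Step~1 allows) to be the restriction of a $C_c^\infty(\mathbb{R}^d)$ function. This vanishing of $T$, not of the individual $\mu_\alpha$, is the correct conclusion.

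In fact Hahn--Banach is dispensable. The same divided differences, applied to $\sigma(a\,\theta\cdot x+b)$, put every $(\theta\cdot x)^k$ directly into the $C^m(\overline{\Omega})$-closure of the networks. Powers of linear forms span all polynomials, and polynomials are $C^m$-dense on compact sets.

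Compared with the cited source, your key step is essentially the Leshno--Pinkus argument. Boundedness enters only to exclude polynomials, so you get density for every $C^\infty$ non-polynomial activation, which covers the $\sin$ activation the paper uses. Hornik's theorem, by contrast, is formulated for $\sigma\in C^m$; under that weaker smoothness you would first mollify $\sigma$, as you already mention. Your added hypotheses, $1\le p<\infty$ and a Lipschitz (or segment-condition) domain, are genuinely needed for the lemma as stated.
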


\begin{lemma}[Spectral Truncation]
\label{lemma:barron_bound}
Under Assumption 1 ($u^\dagger \in H^{p}(\Omega)$, $p > d/2 + 1$), define the band-limited hypothesis space
$$\mathcal{H}_{\omega_{\max}} := \Bigl\{ \textstyle\sum_{j=1}^{n} a_j \sin(\boldsymbol{\omega}_j \cdot \mathbf{x} + b_j) \;\Big|\; n \in \mathbb{N},\; a_j \in \mathbb{R},\; |\boldsymbol{\omega}_j| \le \omega_{\max} \Bigr\},$$
and let $u^\dagger_{<\omega_{\max}}$ denote the $L^2$-projection of $u^\dagger$ onto $\mathcal{H}_{\omega_{\max}}$ and $u^\dagger_{\mathrm{tail}}:=u^\dagger-u^\dagger_{<\omega_{\max}}$ its tail (and analogously $\lambda^\dagger_{<\omega_{\max}}$, $\lambda^\dagger_{\mathrm{tail}}$ for the parameter $\lambda^\dagger$). Then for any $0 \le s < p$,
$$\| u^\dagger_{\mathrm{tail}} \|_{H^{s}(\Omega)} \le \omega_{\max}^{-(p-s)} \| u^\dagger \|_{H^{p}(\Omega)}, \qquad \|\lambda^\dagger_{\mathrm{tail}}\|_{H^{s}(\Omega)} \le \omega_{\max}^{-(p-s)} \|\lambda^\dagger\|_{H^{p}(\Omega)}.$$
\end{lemma}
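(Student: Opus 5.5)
Because the statement is a Fourier tail estimate, I will prove it by identifying the band-limited hypothesis space with a low-pass frequency subspace and then using the $H^p$ weight to control everything above the cutoff.

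\textbf{Step 1: extension.} Since $\Omega$ is Lipschitz and bounded, take Stein's extension operator $E:H^{p}(\Omega)\to H^{p}(\mathbb{R}^d)$. It satisfies $\|Eu^\dagger\|_{H^p(\mathbb{R}^d)}\le C_E\|u^\dagger\|_{H^p(\Omega)}$. Alternatively one can multiply by a smooth cutoff and periodize on a box $Q\supset\bar\Omega$. On $Q$ the function then has a Fourier series in $e^{i\boldsymbol{\xi}\cdot\mathbf{x}}$. Each pair of modes $\pm\boldsymbol{\xi}$ combines into a real term $a\sin(\boldsymbol{\xi}\cdot\mathbf{x}+b)$. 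So the low-pass truncation $P_{\omega_{\max}}Eu^\dagger:=\sum_{|\boldsymbol{\xi}|\le\omega_{\max}}\hat u(\boldsymbol{\xi})e^{i\boldsymbol{\xi}\cdot \mathbf{x}}$, restricted to $\Omega$, lies in $\mathcal{H}_{\omega_{\max}}$. Note that $\sin(0\cdot\mathbf{x}+\pi/2)$ supplies the constant mode.

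\textbf{Step 2: high-frequency tail bound.} The Sobolev norm is equivalent to a weighted $\ell^2$ norm of Fourier coefficients. Hence
\[
\|(I-P_{\omega_{\max}})Eu^\dagger\|_{H^s}^2\simeq\sum_{|\boldsymbol{\xi}|>\omega_{\max}}(1+|\boldsymbol{\xi}|^2)^{s}|\hat u(\boldsymbol{\xi})|^2 .
\]
Factor out $(1+|\boldsymbol{\xi}|^2)^{-(p-s)}$, which is at most $\omega_{\max}^{-2(p-s)}$ on the tail. This gives
\[
\|(I-P_{\omega_{\max}})Eu^\dagger\|_{H^s}\le \omega_{\max}^{-(p-s)}\|Eu^\dagger\|_{H^p}.
\]
Restricting to $\Omega$ does not increase the norm, so this yields the estimate with the extension constant absorbed. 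The constant is either tacitly normalized to one, or the claim is read up to a domain-dependent constant.

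\textbf{Step 3: pass from the explicit truncation to the projection.} The $L^2$ projection onto $\mathcal{H}_{\omega_{\max}}$ is defined by best approximation. Therefore $\|u^\dagger_{\mathrm{tail}}\|_{L^2}\le\|u^\dagger-P_{\omega_{\max}}Eu^\dagger\|_{L^2}$, which settles the case $s=0$. For $s>0$ the $L^2$ best approximation is not automatically $H^s$-quasi-optimal, and I expect this to be the main obstacle.

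My first attempt is to interpret the projection through the periodized extension. There the $L^2(Q)$ projection onto span$\{e^{i\boldsymbol{\xi}\cdot\mathbf{x}}:|\boldsymbol{\xi}|\le\omega_{\max}\}$ is exactly $P_{\omega_{\max}}$, since the modes are orthogonal. This projection commutes with derivatives, so the $H^s$ bound of Step 2 applies directly. I would therefore define $u^\dagger_{<\omega_{\max}}$ as this spectral projection in the proof, noting that it belongs to $\mathcal{H}_{\omega_{\max}}$.

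If one insists on the projection in $L^2(\Omega)$ itself, the fallback has three parts. First, split $u^\dagger_{\mathrm{tail}}=(u^\dagger-P_{\omega_{\max}}Eu^\dagger)+(P_{\omega_{\max}}Eu^\dagger-u^\dagger_{<\omega_{\max}})$. Second, bound the second difference, which is band-limited, by a Bernstein inequality $\|v\|_{H^s}\lesssim\omega_{\max}^{s}\|v\|_{L^2}$. Third, bound its $L^2$ norm by $2\omega_{\max}^{-p}\|u^\dagger\|_{H^p}$ via the triangle inequality and the $s=0$ case. Combining gives $\omega_{\max}^{-(p-s)}$ again, up to constants. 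The argument for $\lambda^\dagger$ is identical.
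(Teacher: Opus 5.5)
Your main line is essentially the paper's proof. The paper's entire argument is your Step~2: it asserts that $u^\dagger_{\mathrm{tail}}$ is Fourier-supported in $\{|\boldsymbol{\omega}|>\omega_{\max}\}$ and uses $(1+|\boldsymbol{\omega}|^2)^{-(p-s)}\le\omega_{\max}^{-2(p-s)}$ there. Your Steps~1 and~3 make explicit what the paper leaves tacit. First, an extension is needed, and the paper silently normalizes its constant to one, as you note. Second, $u^\dagger_{<\omega_{\max}}$ must be read as the low-pass truncation of that extension; your periodization even makes this truncation a genuine finite sum in $\mathcal{H}_{\omega_{\max}}$. With that reading your argument is correct.

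The fallback, however, fails and should be dropped. The Bernstein inequality $\|v\|_{H^s(\Omega)}\lesssim\omega_{\max}^{s}\|v\|_{L^2(\Omega)}$ is false on $\mathcal{H}_{\omega_{\max}}$ with norms taken on a bounded $\Omega$. To see this, let $g\in L^2(\Omega)$ be orthogonal to every $\sin(\boldsymbol{\omega}\cdot\mathbf{x}+b)$ with $|\boldsymbol{\omega}|\le\omega_{\max}$. Then the Fourier transform of its zero extension vanishes on a ball. Since that transform is entire (Paley--Wiener), it vanishes identically, so $g=0$. Hence the linear space $\mathcal{H}_{\omega_{\max}}$ is dense in $L^2(\Omega)$. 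Any bound $\|v\|_{H^s(\Omega)}\le C\|v\|_{L^2(\Omega)}$ on it would then extend by continuity and force $L^2(\Omega)\subset H^s(\Omega)$.

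Restricting to the finitely many lattice modes of your box does not rescue the $\omega_{\max}^{s}$ factor either. On a proper subdomain the inverse inequalities are only of Markov type: already in one dimension, Videnskii's inequality for trigonometric polynomials of degree $n$ on a subarc has the sharp factor $n^2$. So that route yields only $\omega_{\max}^{-(p-2s)}$.

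The same density fact shows that the literal $L^2(\Omega)$ projection is degenerate. $\mathcal{H}_{\omega_{\max}}$ is not closed, and its closure is all of $L^2(\Omega)$. Projecting onto the closure therefore returns $u^\dagger$ itself, while a best approximation inside $\mathcal{H}_{\omega_{\max}}$ generally does not exist. Your spectral reading is thus not merely convenient but the only one under which the lemma has content, and it is exactly what the paper's one-line proof implicitly uses. (The paper does rely on the same inverse inequality later, in the proofs of Theorems~\ref{thm:u} and~\ref{thm:convergence}, but not in this lemma.)
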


\begin{proof}[Proof of Lemma~\ref{lemma:barron_bound}]
Since $u^\dagger_{\mathrm{tail}}$ has Fourier support on $\{|\boldsymbol{\omega}|>\omega_{\max}\}$, for $0\le s<p$:
$$\|u^\dagger_{\mathrm{tail}}\|_{H^{s}(\Omega)}^2 = \int_{|\boldsymbol{\omega}|>\omega_{\max}} (1+|\boldsymbol{\omega}|^2)^{s} |\hat{u}^\dagger|^2\,d\boldsymbol{\omega} \le \omega_{\max}^{-2(p-s)}\|u^\dagger\|_{H^{p}(\Omega)}^2,$$
where the inequality uses $(1+|\boldsymbol{\omega}|^2)^{-(p-s)} \le \omega_{\max}^{-2(p-s)}$ on the integration domain; the bound for $\lambda^\dagger_{\mathrm{tail}}$ is identical.
\end{proof}

\section{Proof of Theorem \ref{thm:u}}
\begin{proof}
Let $e_u^{(k)} = u_k - u^\dagger \in H^s(\Omega)$ denote the absolute state approximation error at the $k$-th alternating stage. Since $u_k$ is constrained within a band-limited hypothesis space $\mathcal{H}_{\omega_{\max}}$, we decompose the exact solution $u^\dagger \in H^{p}(\Omega)$ into a spectral projection $u^\dagger_{<\omega_{\max}}$ onto $\mathcal{H}_{\omega_{\max}}$ and a high-frequency residual. By Lemma~\ref{lemma:barron_bound}, the truncation error satisfies:

$$\Vert u^\dagger - u^\dagger_{<\omega_{\max}} \Vert_{H^{s}(\Omega)}^2 = \mathcal{O}(\omega_{\max}^{-2(p-s)}).$$

Applying the triangle inequality, the total error is decoupled into the approximation error within the band-limited space and the spectral truncation error:

$$\Vert e_u^{(k)} \Vert_{H^{s}(\Omega)}^2 \le 2 \Vert u_k - u^\dagger_{<\omega_{\max}} \Vert_{H^{s}(\Omega)}^2 + 2 \Vert u^\dagger - u^\dagger_{<\omega_{\max}} \Vert_{H^{s}(\Omega)}^2.$$

For the band-limited component $u_k - u^\dagger_{<\omega_{\max}} \in \mathcal{H}_{\omega_{\max}}$, we apply the inverse inequality $\Vert v \Vert_{H^{s}} \le C_{inv}\omega_{\max}^{s} \Vert v \Vert_{L^2}$ for all $v \in \mathcal{H}_{\omega_{\max}}$, where $C_{inv}$ is a dimension-dependent constant. A further triangle inequality gives
$$\Vert u_k - u^\dagger_{<\omega_{\max}} \Vert_{L^2(\Omega)} \le \Vert u_k - u^\dagger \Vert_{L^2(\Omega)} + \Vert u^\dagger - u^\dagger_{<\omega_{\max}} \Vert_{L^2(\Omega)},$$
where the second term satisfies $\Vert u^\dagger - u^\dagger_{<\omega_{\max}} \Vert_{L^2} = \mathcal{O}(\omega_{\max}^{-p})$ by Lemma~\ref{lemma:barron_bound}. When multiplied by $C_{inv}^2\omega_{\max}^{2s}$, this contributes $\mathcal{O}(\omega_{\max}^{-2(p-s)})$ and is absorbed into the spectral truncation term. Setting $C_{stab} := 2C_{inv}$, we obtain:
$$\Vert e_u^{(k)} \Vert_{H^{s}(\Omega)}^2 \le C_{stab}^2\,\omega_{\max}^{2s}\, \Vert u_k - u^\dagger \Vert_{L^2(\Omega)}^2 + \mathcal{O}(\omega_{\max}^{-2(p-s)}).$$

To relate the $L^2$ state error to the continuous observation residual, we use the noise model $y = u^\dagger + \epsilon_{noise}$ defined on $\Omega$ with noise level $\delta_{noise}^2$. The triangle inequality yields:

$$\Vert u_k - u^\dagger \Vert_{L^2(\Omega)}^2 \le 2 \Vert u_k - y \Vert_{L^2(\Omega)}^2 + \mathcal{O}(\delta_{noise}^2).$$
Since $\Vert u_k - y \Vert_{L^2(\Omega)}^2$ equals the data-fidelity component of $\bar{\mathcal{R}}_u^{(k)}$ (cf.~\eqref{u_unnorm}), the non-negativity of the remaining boundary and physics terms gives $\Vert u_k - y \Vert_{L^2(\Omega)}^2 \le \bar{\mathcal{R}}_u^{(k)}$. Substituting back, we obtain the deterministic estimate:

$$\Vert u_k - u^\dagger \Vert_{H^{s}(\Omega)}^2 \le C_{stab}^2 \, \omega_{\max}^{2s} \big( \bar{\mathcal{R}}^{(k)}_u + \delta_{noise}^2 \big) + \mathcal{O}(\omega_{\max}^{-2(p-s)}).$$
\end{proof}

\section{Proof of Theorem \ref{thm:lambda}}

\begin{proof}
Let $e_{\lambda}^{(k)} = \lambda_k - \lambda^\dagger \in L^2(\Omega)$ and $e_u^{(k)} = u_k - u^\dagger \in H^2(\Omega)$ denote the approximation errors at the $k$-th alternating stage. Since $\mathcal{N}(u^\dagger, \lambda^\dagger) \equiv 0$, the physical residual $\mathcal{N}(u_k, \lambda_k)$ can be decoupled by adding and subtracting $\mathcal{N}(u^\dagger, \lambda_k)$:

$$\mathcal{A}_{u^\dagger} e_{\lambda}^{(k)} = \mathcal{N}(u_k, \lambda_k) - \underbrace{\big[ \mathcal{N}(u_k, \lambda_k) - \mathcal{N}(u^\dagger, \lambda_k) \big]}_{:= \mathcal{P}_u(e_u^{(k)}, \lambda_k)},$$
where $\mathcal{A}_{u^\dagger} e_\lambda := \mathcal{N}(u^\dagger, \lambda^\dagger + e_\lambda) - \mathcal{N}(u^\dagger, \lambda^\dagger) = \mathcal{N}_\lambda(e_\lambda)$ is the linearized parameter operator with $u^\dagger$ fixed. Applying $(a+b)^2 \le 2a^2 + 2b^2$:

$$\Vert \mathcal{A}_{u^\dagger} e_{\lambda}^{(k)} \Vert_{L^2(\Omega)}^2 \le 2\Vert \mathcal{N}(u_k, \lambda_k) \Vert_{L^2(\Omega)}^2 + 2\Vert \mathcal{P}_u(e_u^{(k)}, \lambda_k) \Vert_{L^2(\Omega)}^2.$$

Since $\mathcal{N}$ is second-order, $\mathcal{P}_u$ involves derivatives of $e_u^{(k)}$ up to order two. By Assumption~\ref{assump:uniform}, $\lambda_k$ is uniformly bounded in $W^{1,\infty}(\Omega)$, which guarantees a uniform Lipschitz constant $L_u < \infty$ depending only on $\mathcal{N}$ and $M_\lambda$:

$$\Vert \mathcal{P}_u(e_u^{(k)}, \lambda_k) \Vert_{L^2(\Omega)}^2 \le L_u \Vert e_u^{(k)} \Vert_{H^2(\Omega)}^2.$$

Subsequently, we analytically invert the linearized operator $\mathcal{A}_{u^{\dagger}}$ to isolate $e_{\lambda}$, which is customized for the specific structure of the inverse problem dictated by Assumption 2:
\begin{itemize}
    \item \textbf{Inverse Source Problem} ($\lambda = f$): The operator acts as the negative identity, $\mathcal{A}_{u^{\dagger}}e_{f} = -e_{f}$. This trivializes the inversion, yielding $\|e_{f}\|_{L^{2}(\Omega)} = \|\mathcal{A}_{u^{\dagger}}e_{f}\|_{L^{2}(\Omega)}$.
    \item \textbf{Inverse Potential Problem} ($\lambda = b$): The operator acts as a pointwise multiplier, $\mathcal{A}_{u^{\dagger}}e_{b} = u^{\dagger}e_{b}$. Imposing the strict physical non-degeneracy condition $|u^{\dagger}| \ge c_{0} > 0$ almost everywhere in $\Omega$, we obtain $\|e_{b}\|_{L^{2}(\Omega)} \le c_{0}^{-1} \|\mathcal{A}_{u^{\dagger}}e_{b}\|_{L^{2}(\Omega)}$.
\end{itemize}

Substituting the $L_u$ bound and applying the inversion inequalities established in the two cases above, we absorb all case-specific constants into a single stability constant $C_\lambda > 0$, yielding the continuous deterministic estimate:

$$\Vert e_{\lambda}^{(k)} \Vert_{L^2(\Omega)}^2 \le C_\lambda^2 \Big( \Vert \mathcal{N}(u_k, \lambda_k) \Vert_{L^2(\Omega)}^2 + L_u \Vert e_u^{(k)} \Vert_{H^2(\Omega)}^2 \Big).$$

Since $\bar{\mathcal{R}}_\lambda^{(k)}$ comprises the continuous $L^2$ physics residual $\Vert \mathcal{N}(u_k, \lambda_k) \Vert_{L^2(\Omega)}^2$ together with non-negative regularization terms (cf.~\eqref{lambda_unnorm}), we have $\bar{\mathcal{R}}_\lambda^{(k)} \ge \Vert \mathcal{N}(u_k, \lambda_k) \Vert_{L^2(\Omega)}^2$. We arrive at the final deterministic bound:

$$\Vert \lambda_k - \lambda^\dagger \Vert_{L^2(\Omega)}^2 \le C_\lambda^2 \Big( \bar{\mathcal{R}}_\lambda^{(k)} + L_u \Vert u_k - u^\dagger \Vert_{H^2(\Omega)}^2 \Big).$$
\end{proof}

\section{Proof of Theorem \ref{thm:convergence}}

\begin{proof}
As in the proof of Theorem~\ref{thm:lambda}, $e_u^{(k)}=u_k-u^\dagger$ and $e_\lambda^{(k)}=\lambda_k-\lambda^\dagger$ denote the stage-$k$ errors. We first derive a one-step contraction for the \emph{ideal} state and parameter losses, then convert each into a recursion for the \emph{actual} losses, and finally combine the two recursions into a single contracting functional.

The stage-$k$ state loss, evaluated at the state iterate $u_k$ with the parameter frozen at $\lambda_{k-1}$, reads
\begin{equation}
\bar{\mathcal{R}}_u^{(k)} = \Vert u_k - u_{data} \Vert_{L^2(\Omega)}^2 + \eta_1 \Vert \mathcal{B}u_k - g \Vert_{L^2(\partial\Omega)}^2 + \eta_2 \Vert \mathcal{N}(u_k, \lambda_{k-1}) \Vert_{L^2(\Omega)}^2.
\end{equation}
Here $u_{data}=u^\dagger+\epsilon_{noise}$ with $\|u_{data}-u^\dagger\|_{L^2(\Omega)}^2=\mathcal{O}(\delta_{noise}^2)$; for clarity we take $\mathcal{O}_l=\mathcal{I}$, a bounded and boundedly invertible $\mathcal{O}_l$ only rescaling the constants by $\|\mathcal{O}_l\|^2$ and its lower bound, the same conditioning already embedded in $C_{stab}$ of Theorem~\ref{thm:u}. Replacing $\lambda_{k-1}$ by $\lambda^\dagger$ defines the ideal counterpart
\begin{equation}
\bar{\mathcal{R}}_u^\dagger(u) = \Vert u - u_{data} \Vert_{L^2(\Omega)}^2 + \eta_1 \Vert \mathcal{B}u - g \Vert_{L^2(\partial\Omega)}^2 + \eta_2 \Vert \mathcal{N}(u, \lambda^\dagger) \Vert_{L^2(\Omega)}^2.
\end{equation}

With $u^\dagger_{<\omega_{\max}}$ and $u^\dagger_{\mathrm{tail}}$ as in Lemma~\ref{lemma:barron_bound}, the band-limited error is
\begin{equation*}
    \tilde{e}_u^{(k-1)} \;:=\; u^\dagger_{<\omega_{\max}} - u_{k-1} \;=\; -\,e_u^{(k-1)}-u^\dagger_{\mathrm{tail}} \;\in\; \mathcal{H}_{\omega_{\max}}.
\end{equation*}
Since the target $u^\dagger_{<\omega_{\max}}$ is fixed across stages, Lemma~\ref{UOT} produces, for any $\alpha_u\in(1/2,1]$, there exists an increment $\phi^\star$ of that width such that
\begin{equation}\label{eq:in_class_contraction}
    \|\tilde{e}_u^{(k-1)} - \phi^\star\|_{L^2(\Omega)}^2 \;\le\; (1-\alpha_u)\,\|\tilde{e}_u^{(k-1)}\|_{L^2(\Omega)}^2.
\end{equation}
Fix $\omega_{\max}$ and set $\delta\phi:=\phi^\star-\tilde{e}_u^{(k-1)}\in\mathcal{H}_{\omega_{\max}}$, the in-class residual left by the increment, which by~\eqref{eq:in_class_contraction} obeys $\|\delta\phi\|_{L^2}^2\le(1-\alpha_u)\|\tilde{e}_u^{(k-1)}\|_{L^2}^2$, so that $u_{k-1}+\phi^\star-u^\dagger=\delta\phi-u^\dagger_{\mathrm{tail}}$. Using $\mathcal{B}u^\dagger=g$ and $\mathcal{N}(u^\dagger,\lambda^\dagger)=0$ together with the linearity of $\mathcal{B}$ and of $\mathcal{N}(\cdot,\lambda^\dagger)$, the data, boundary and physics terms at $u_{k-1}+\phi^\star$ become, respectively,
\begin{equation*}
    \delta\phi-u^\dagger_{\mathrm{tail}}-\epsilon_{noise},\qquad \mathcal{B}(\delta\phi-u^\dagger_{\mathrm{tail}}),\qquad \mathcal{N}_u^{\lambda^\dagger}(\delta\phi-u^\dagger_{\mathrm{tail}}),
\end{equation*}
i.e.\ bounded operators of orders $0,1,2$ acting on $\delta\phi-u^\dagger_{\mathrm{tail}}$. We next split each by $(a+b)^2\le(1+\xi_0)a^2+(1+\tfrac1{\xi_0})b^2$ into the band-limited part $\delta\phi\in\mathcal{H}_{\omega_{\max}}$ and the tail $u^\dagger_{\mathrm{tail}}$, and bound them separately.

Writing $C_{\mathcal{B}}$ and $C_{\mathcal{N}}$ for the continuity constants of $\mathcal{B}$ and $\mathcal{N}_u^{\lambda^\dagger}$, applying the inverse inequalities $\|v\|_{H^1}\le C_{inv}\,\omega_{\max}\|v\|_{L^2}$ and $\|v\|_{H^2}\le C_{inv}\,\omega_{\max}^2\|v\|_{L^2}$ to $\delta\phi$, we have
\begin{equation*}
    \|u_{k-1}+\phi^\star-u_{data}\|_{L^2}^2\le(1+\xi_0)\|\delta\phi\|_{L^2}^2+\big(1+\tfrac1{\xi_0}\big)\,\mathcal{O}(\delta_{noise}^2+\omega_{\max}^{-2p}),
\end{equation*}
\begin{equation*}
\begin{aligned}
    \eta_1\|\mathcal{B}(u_{k-1}+\phi^\star)-g\|_{L^2(\partial\Omega)}^2
    &\le\eta_1(1+\xi_0)C_{\mathcal{B}}^2\omega_{\max}^2\|\delta\phi\|_{L^2}^2\\
    &\quad+\eta_1\big(1+\tfrac1{\xi_0}\big)C_{\mathcal{B}}^2\,\mathcal{O}(\omega_{\max}^{-2(p-1)}),
\end{aligned}
\end{equation*}
\begin{equation*}
\begin{aligned}
    \eta_2\|\mathcal{N}(u_{k-1}+\phi^\star,\lambda^\dagger)\|_{L^2(\Omega)}^2
    &\le\eta_2(1+\xi_0)C_{\mathcal{N}}^2\omega_{\max}^4\|\delta\phi\|_{L^2}^2\\
    &\quad+\eta_2\big(1+\tfrac1{\xi_0}\big)C_{\mathcal{N}}^2\,\mathcal{O}(\omega_{\max}^{-2(p-2)}).
\end{aligned}
\end{equation*}
Here the constant $C_{inv}$ is absorbed into $C_{\mathcal{B}}$ and $C_{\mathcal{N}}$. With $\kappa_u^2:=1+\eta_1C_{\mathcal{B}}^2\omega_{\max}^2+\eta_2C_{\mathcal{N}}^2\omega_{\max}^4$ and $C_{noise},C_{trunc}>0$ collecting the $(1+\tfrac1{\xi_0})$-weighted noise and tail factors, using Lemma~\ref{lemma:barron_bound}, we have 
\begin{equation}\label{eq:Ru_ineq1}
\begin{aligned}
    \bar{\mathcal{R}}_u^\dagger(u_{k-1}+\phi^\star)
    &\le(1+\xi_0)\kappa_u^2\,\|\delta\phi\|_{L^2}^2\\
    &\quad+C_{noise}\,\delta_{noise}^2+C_{\mathrm{trunc}}\,\omega_{\max}^{-2(p-2)}.
\end{aligned}
\end{equation}
Since the data term is one summand of $\bar{\mathcal{R}}_u^{(k-1)}$, $\|u_{k-1}-u_{data}\|_{L^2}^2\le\bar{\mathcal{R}}_u^{(k-1)}$; with $\tilde{e}_u^{(k-1)}=-(u_{k-1}-u_{data})-u^\dagger_{\mathrm{tail}}-\epsilon_{noise}$, the three-term bound $(a+b+c)^2\le3(a^2+b^2+c^2)$ yields
\begin{equation}\label{eq:eu_ineq}
    \|\tilde{e}_u^{(k-1)}\|_{L^2}^2\le 3\,\bar{\mathcal{R}}_u^{(k-1)}+\mathcal{O}(\delta_{noise}^2+\omega_{\max}^{-2(p-2)}).
\end{equation}
Combining \eqref{eq:in_class_contraction}, \eqref{eq:Ru_ineq1} and \eqref{eq:eu_ineq} together, we have
\begin{equation}\label{eq:UAT_corrected_0}
    \bar{\mathcal{R}}_u^\dagger(u_{k-1}+\phi^\star) \;\le\; 3(1+\xi_0)\kappa_u^2(1-\alpha_u)\,\bar{\mathcal{R}}_u^{(k-1)} \;+\; C_{noise}\,\delta_{noise}^2 \;+\; C_{\mathrm{trunc}}\,\omega_{\max}^{-2(p-2)}.
\end{equation}
For a given $\hat\alpha_u\in(1/2,1)$, we can chose $\alpha_u$ such that $3(1+\xi_0)\kappa_u^2(1-\alpha_u)<1-\hat\alpha_u$, leading to
\begin{equation}\label{eq:UAT_corrected}
    \bar{\mathcal{R}}_u^\dagger(u_{k-1}+\phi^\star) \;\le\; (1-\hat{\alpha}_u)\,\bar{\mathcal{R}}_u^{(k-1)} \;+\; C_{noise}\,\delta_{noise}^2 \;+\; C_{\mathrm{trunc}}\,\omega_{\max}^{-2(p-2)}.
\end{equation}
For simplicity, we henceforth denote $\hat\alpha_u$ as $\alpha_u$. The minimiser property of the state update gives $\bar{\mathcal{R}}_u^{(k)}\le\bar{\mathcal{R}}_u(u_{k-1}+\phi^\star;\lambda_{k-1})$, the actual loss with the parameter frozen at $\lambda_{k-1}$. Since $\mathcal{N}(u,\lambda_{k-1})=\mathcal{N}(u,\lambda^\dagger)+\mathcal{N}_\lambda(u,\lambda_{k-1}-\lambda^\dagger)$, Young's inequality $(a+b)^2\le(1+\xi_1)a^2+(1+\tfrac1{\xi_1})b^2$ relates the actual loss to its ideal counterpart,
\begin{equation*}
    \bar{\mathcal{R}}_u(u;\lambda_{k-1})\le(1+\xi_1)\,\bar{\mathcal{R}}_u^\dagger(u)+\eta_2\big(1+\tfrac1{\xi_1}\big)\|\mathcal{N}_\lambda(u,\lambda_{k-1}-\lambda^\dagger)\|_{L^2(\Omega)}^2.
\end{equation*}
Choosing $\xi_1=\alpha_u/4$ makes $(1+\xi_1)(1-\alpha_u)\le1-\tfrac{\alpha_u}{2}$, so substituting~\eqref{eq:UAT_corrected} at $u=u_{k-1}+\phi^\star$ into the previous display gives
\begin{equation}\label{eq:Rku_bound}
    \bar{\mathcal{R}}_u^{(k)}\le\Big(1-\tfrac{\alpha_u}{2}\Big)\bar{\mathcal{R}}_u^{(k-1)}+\eta_2\big(1+\tfrac1{\xi_1}\big)\big\|\mathcal{N}_\lambda(u_{k-1}+\phi^\star,\lambda_{k-1}-\lambda^\dagger)\big\|_{L^2(\Omega)}^2+\mathcal{O}\big(\delta_{noise}^2+\omega_{\max}^{-2(p-2)}\big).
\end{equation}
It remains to control the discrepancy $\mathcal{N}_{\lambda}$. Denote the Lipschitz constant of $\mathcal{N}_\lambda$ as $L_\lambda$, we have 
\begin{equation}\label{eq:disc2ek}
  \|\mathcal{N}_\lambda(u,e_\lambda^{(k-1)})\|_{L^2(\Omega)}^2\le L_\lambda^2 M_u^2\|e_\lambda^{(k-1)}\|_{L^2(\Omega)}^2.
\end{equation}
Using Theorem~\ref{thm:lambda} and Theorem~\ref{thm:u}, we have
\begin{equation}\label{eq:ek_bound}
    \|e_\lambda^{(k-1)}\|_{L^2}^2\le C_\lambda^2\big(\bar{\mathcal{R}}_\lambda^{(k-1)}+L_u\|e_u^{(k-1)}\|_{H^2}^2\big),
    \qquad
    \|e_u^{(k-1)}\|_{H^2}^2\le C_{stab}^2\omega_{\max}^4\big(\bar{\mathcal{R}}_u^{(k-1)}+\delta_{noise}^2\big)+\mathcal{O}(\omega_{\max}^{-2(p-2)}),
\end{equation}
denote $C_{cross,1} = C_{\xi_1}L_\lambda^2 M_u^2 C_\lambda^2$ with $C_{\xi_i}:=(1+1/\xi_i)$ and $\zeta_u=\mathcal{O}\big(\delta_{noise}^2+\omega_{\max}^{-2(p-2)}\big)$, insert \eqref{eq:ek_bound} and \eqref{eq:disc2ek} into \eqref{eq:Rku_bound}, we can obtain
\begin{equation}\label{eq:state_recursion}
  \bar{\mathcal{R}}_u^{(k)}\;\le\;\bigl(1-\tfrac{\alpha_u}{2}+\eta_2 C_{cross,1}L_u C_{stab}^2\omega_{\max}^4\bigr)\bar{\mathcal{R}}_u^{(k-1)}+\eta_2 C_{cross,1}\,\bar{\mathcal{R}}_\lambda^{(k-1)}+\zeta_u,
\end{equation}

The parameter update follows the same pattern. Define the ideal parameter loss
\begin{equation*}
    \bar{\mathcal{R}}_\lambda^\dagger(\lambda):=\|\mathcal{N}(u^\dagger,\lambda)\|_{L^2(\Omega)}^2=\|\mathcal{A}_{u^\dagger}(\lambda-\lambda^\dagger)\|_{L^2(\Omega)}^2,
\end{equation*}
where $\mathcal{A}_{u^\dagger}=\mathcal{N}_\lambda$ is the linearized parameter operator of Theorem~\ref{thm:lambda} and we used $\mathcal{N}(u^\dagger,\lambda^\dagger)=0$. With $\lambda^\dagger_{<\omega_{\max}}$, $\lambda^\dagger_{\mathrm{tail}}$ as in Lemma~\ref{lemma:barron_bound}, set the band-limited error $\tilde{e}_\lambda^{(k-1)} := \lambda^\dagger_{<\omega_{\max}} - \lambda_{k-1} = -e_\lambda^{(k-1)}-\lambda^\dagger_{\mathrm{tail}}\in\mathcal{H}_{\omega_{\max}}$. By the density in Lemma~\ref{UOT}, there exist a stage-independent width $n^\star_\lambda$ and an increment $\psi^\star$ such that
\begin{equation*}
    \|\tilde{e}_\lambda^{(k-1)} - \psi^\star\|_{L^2(\Omega)}^2 \;\le\; (1-\alpha_\lambda)\,\|\tilde{e}_\lambda^{(k-1)}\|_{L^2(\Omega)}^2.
\end{equation*}
Since $\lambda_{k-1}+\psi^\star-\lambda^\dagger=-(\tilde{e}_\lambda^{(k-1)}-\psi^\star)-\lambda^\dagger_{\mathrm{tail}}$, the inequality $(a+b)^2\le(1+\xi_2)a^2+(1+\tfrac1{\xi_2})b^2$ gives
\begin{equation}\label{eq:Rl_young}
    \bar{\mathcal{R}}_\lambda^\dagger(\lambda_{k-1}+\psi^\star)\le(1+\xi_2)\|\mathcal{A}_{u^\dagger}(\tilde{e}_\lambda^{(k-1)}-\psi^\star)\|_{L^2}^2+(1+\tfrac1{\xi_2})\|\mathcal{A}_{u^\dagger}\lambda^\dagger_{\mathrm{tail}}\|_{L^2}^2.
\end{equation}
By Assumption~\ref{ass1}, $u^\dagger\in H^p\hookrightarrow L^\infty(\Omega)$, so $\mathcal{A}_{u^\dagger}$ is bounded, $\|\mathcal{A}_{u^\dagger}v\|_{L^2}\le L_\lambda\|v\|_{L^2}$, while Theorem~\ref{thm:lambda} gives the reverse stability bound $\|v\|_{L^2}\le C_\lambda\|\mathcal{A}_{u^\dagger}v\|_{L^2}$. With $\tilde{e}_\lambda^{(k-1)}=-e_\lambda^{(k-1)}-\lambda^\dagger_{\mathrm{tail}}$ and $\|\mathcal{A}_{u^\dagger}e_\lambda^{(k-1)}\|_{L^2}^2=\bar{\mathcal{R}}_\lambda^\dagger(\lambda_{k-1})$, the two terms of the split obey
\begin{align*}
    \|\mathcal{A}_{u^\dagger}(\tilde{e}_\lambda^{(k-1)}-\psi^\star)\|_{L^2}^2
    &\le L_\lambda^2\|\tilde{e}_\lambda^{(k-1)}-\psi^\star\|_{L^2}^2
    \le L_\lambda^2(1-\alpha_\lambda)\|\tilde{e}_\lambda^{(k-1)}\|_{L^2}^2,\\
    \|\tilde{e}_\lambda^{(k-1)}\|_{L^2}^2
    &\le C_\lambda^2\|\mathcal{A}_{u^\dagger}\tilde{e}_\lambda^{(k-1)}\|_{L^2}^2
    \le 2C_\lambda^2\big(\bar{\mathcal{R}}_\lambda^\dagger(\lambda_{k-1})+\|\mathcal{A}_{u^\dagger}\lambda^\dagger_{\mathrm{tail}}\|_{L^2}^2\big),
\end{align*}
the four inequalities using, in order, the boundedness of $\mathcal{A}_{u^\dagger}$, the contraction, the stability bound, and $(a+b)^2\le2(a^2+b^2)$. The tail term is controlled by the boundedness of $\mathcal{A}_{u^\dagger}$ and Lemma~\ref{lemma:barron_bound} at $s=0$,
\begin{equation*}
    \|\mathcal{A}_{u^\dagger}\lambda^\dagger_{\mathrm{tail}}\|_{L^2}^2\le L_\lambda^2\|\lambda^\dagger_{\mathrm{tail}}\|_{L^2}^2\le L_\lambda^2\,\omega_{\max}^{-2p}\|\lambda^\dagger\|_{H^p(\Omega)}^2.
\end{equation*}
Substituting all of these into \eqref{eq:Rl_young} gives
\begin{equation*}
    \bar{\mathcal{R}}_\lambda^\dagger(\lambda_{k-1}+\psi^\star)\le \kappa_\lambda^2(1-\alpha_\lambda)\,\bar{\mathcal{R}}_\lambda^\dagger(\lambda_{k-1})+C'_{\mathrm{trunc}}\,\omega_{\max}^{-2p}\|\lambda^\dagger\|_{H^p}^2,\qquad \kappa_\lambda^2:=2(1+\xi_2)L_\lambda^2C_\lambda^2.
\end{equation*}
Similar to \eqref{eq:UAT_corrected}, by redefining $\alpha_{\lambda}$ properly the above inequality can be written as
\begin{equation}\label{eq:lambda_UAT_corrected}
    \bar{\mathcal{R}}_\lambda^\dagger(\lambda_{k-1} + \psi^\star) \;\le\; (1-\alpha_\lambda)\,\bar{\mathcal{R}}_\lambda^\dagger(\lambda_{k-1}) \;+\; C'_{\mathrm{trunc}}\,\omega_{\max}^{-2p}\,\|\lambda^\dagger\|^2_{H^p(\Omega)},
\end{equation}
with $C'_{\mathrm{trunc}}>0$ the parameter-side analog of $C_{\mathrm{trunc}}$, collecting $L_\lambda$, $C_\lambda$, the Young factor and the Lemma~\ref{lemma:barron_bound} tail constant. Defining the state perturbation operator $\mathcal{P}_u^{(j)}=\mathcal{N}(u_j,\,\cdot\,)-\mathcal{N}(u^\dagger,\,\cdot\,)$, the Lipschitz dependence of $\mathcal{N}$ on $u$ (Theorem~\ref{thm:lambda}, constant $L_u$) together with the second bound of~\eqref{eq:ek_bound} gives
\begin{equation}\label{eq:Pu_bound}
    \|\mathcal{P}_u^{(j)}\|_{L^2}^2\le L_u\|e_u^{(j)}\|_{H^2}^2\le L_u C_{stab}^2\omega_{\max}^4\,\bar{\mathcal{R}}_u^{(j)}+\zeta_u',
\end{equation}
where $\zeta_u':=L_u C_{stab}^2\omega_{\max}^4\,\delta_{noise}^2+L_u\,\mathcal{O}(\omega_{\max}^{-2(p-2)})$ collects the noise and tail contributions inherited from the $H^2$-stability bound.

Apply two Young splits, each using \eqref{eq:Pu_bound}, to convert the two endpoints of~\eqref{eq:lambda_UAT_corrected}. The minimiser property $\bar{\mathcal{R}}_\lambda^{(k)}\le\bar{\mathcal{R}}_\lambda(\lambda_{k-1}+\psi^\star;u_k)$ combined with $\mathcal{N}(u_k,\lambda)=\mathcal{N}(u^\dagger,\lambda)+\mathcal{P}_u^{(k)}\lambda$ yields
\begin{equation}\label{eq:lhs_split}
    \bar{\mathcal{R}}_\lambda^{(k)}\le(1+\xi_3)\,\bar{\mathcal{R}}_\lambda^\dagger(\lambda_{k-1}+\psi^\star)+C_{\xi_3}\bigl(L_u C_{stab}^2\omega_{\max}^4\,\bar{\mathcal{R}}_u^{(k)}+\zeta_u'\bigr),
\end{equation}
and the same split with $\mathcal{P}_u^{(k-1)}$ acting through $u_{k-1}$ yields
\begin{equation}\label{eq:rhs_split}
    \bar{\mathcal{R}}_\lambda^\dagger(\lambda_{k-1})\le(1+\xi_4)\,\bar{\mathcal{R}}_\lambda^{(k-1)}+C_{\xi_4}\bigl(L_u C_{stab}^2\omega_{\max}^4\,\bar{\mathcal{R}}_u^{(k-1)}+\zeta_u'\bigr).
\end{equation}
Substitute \eqref{eq:rhs_split} into the right-hand side of \eqref{eq:lambda_UAT_corrected} and then plug the resulting bound for $\bar{\mathcal{R}}_\lambda^\dagger(\lambda_{k-1}+\psi^\star)$ into \eqref{eq:lhs_split}. Choose $\xi_4>0$ so that $(1+\xi_4)(1-\alpha_\lambda)\le1-\tfrac{3\alpha_\lambda}{4}$ and then $\xi_3>0$ so that $(1+\xi_3)(1-\tfrac{3\alpha_\lambda}{4})\le1-\tfrac{\alpha_\lambda}{2}$. This gives
\begin{equation}\label{eq:lambda_recursion}
    \bar{\mathcal{R}}_\lambda^{(k)}\;\le\;\bigl(1-\tfrac{\alpha_\lambda}{2}\bigr)\bar{\mathcal{R}}_\lambda^{(k-1)}+C_{cross,2}\,\bar{\mathcal{R}}_u^{(k)}+C_{cross,2}'\,\bar{\mathcal{R}}_u^{(k-1)}+\zeta_\lambda,
\end{equation}
with $C_{cross,2}:=C_{\xi_3}L_u C_{stab}^2\omega_{\max}^4$, $C_{cross,2}':=(1+\xi_3)(1-\alpha_\lambda)C_{\xi_4}L_u C_{stab}^2\omega_{\max}^4$ and $\zeta_\lambda:=\bigl(C_{\xi_3}+(1+\xi_3)(1-\alpha_\lambda)C_{\xi_4}\bigr)\zeta_u'+(1+\xi_3)C'_{\mathrm{trunc}}\,\omega_{\max}^{-2p}\|\lambda^\dagger\|_{H^p}^2$.

Substituting \eqref{eq:state_recursion} into the $C_{cross,2}\bar{\mathcal{R}}_u^{(k)}$ term of \eqref{eq:lambda_recursion} and using $L_u C_{stab}^2\omega_{\max}^4\le C_{cross,2}$ in the resulting state diagonal,
\begin{equation}\label{eq:lambda_after_sub}
    \bar{\mathcal{R}}_\lambda^{(k)}\le\Big(1-\tfrac{\alpha_\lambda}{2}+\eta_2 C_{cross,1}C_{cross,2}\Big)\bar{\mathcal{R}}_\lambda^{(k-1)}+\tilde{C}_{cross,2}\,\bar{\mathcal{R}}_u^{(k-1)}+\zeta_\lambda+C_{cross,2}\zeta_u,
\end{equation}
where $\tilde{C}_{cross,2}:=C_{cross,2}\big(1-\tfrac{\alpha_u}{2}+\eta_2 C_{cross,1}C_{cross,2}\big)+C_{cross,2}'$. Applying the same diagonal over-bound to \eqref{eq:state_recursion} itself and pairing it with \eqref{eq:lambda_after_sub} gives the coupled system
$$
\begin{pmatrix} \bar{\mathcal{R}}_u^{(k)} \\ \bar{\mathcal{R}}_\lambda^{(k)} \end{pmatrix} \;\leq\; \mathbf{M} \begin{pmatrix} \bar{\mathcal{R}}_u^{(k-1)} \\ \bar{\mathcal{R}}_\lambda^{(k-1)} \end{pmatrix} + \boldsymbol{\zeta},
\qquad
\boldsymbol{\zeta} = \big(\zeta_u,\ \zeta_\lambda+C_{cross,2}\zeta_u\big)^{\mathsf{T}},
$$
$$
\mathbf{M}:=\begin{pmatrix} M_{11} & M_{12}\\[2pt] M_{21} & M_{22}\end{pmatrix},\quad
\begin{aligned}
&M_{11}:=1-\tfrac{\alpha_u}{2}+\eta_2 C_{cross,1}C_{cross,2}, &&M_{12}:=\eta_2 C_{cross,1},\\
&M_{22}:=1-\tfrac{\alpha_\lambda}{2}+\eta_2 C_{cross,1}C_{cross,2}, &&M_{21}:=\tilde{C}_{cross,2},
\end{aligned}
$$
Since $M_{12},M_{21}>0$, the discriminant $(M_{11}-M_{22})^2+4M_{12}M_{21}>0$, so the eigenvalues of $\mathbf{M}$ are real and its spectral radius equals the larger one:
$$\rho(\mathbf{M})=\tfrac{M_{11}+M_{22}}{2}+\tfrac{1}{2}\sqrt{(M_{11}-M_{22})^2+4 M_{12}M_{21}}\le\max(M_{11},M_{22})+\sqrt{M_{12}M_{21}},$$
the inequality using $\sqrt{x+y}\le\sqrt{x}+\sqrt{y}$ on $x=(M_{11}-M_{22})^2$ and $y=4M_{12}M_{21}$. Set $\alpha:=\min(\alpha_u,\alpha_\lambda)\in(1/2,1)$, so that $\max(M_{11},M_{22})=1-\alpha/2+\eta_2 C_{cross,1}C_{cross,2}$. Restrict the physics weight $\eta_2$ by
\begin{equation}\label{eq:eta_restriction}
    \eta_2\le\frac{\alpha^2}{64\,C_{cross,1}\tilde{C}_{cross,2}}.
\end{equation}
The definition of $\tilde{C}_{cross,2}$ with $\alpha_u<1$ gives $\tilde{C}_{cross,2}\ge C_{cross,2}(1-\alpha_u/2)\ge C_{cross,2}/2$; substituting this and \eqref{eq:eta_restriction},
$$\eta_2 C_{cross,1}C_{cross,2}\le\frac{\alpha^2 C_{cross,2}}{64\,\tilde{C}_{cross,2}}\le\frac{\alpha^2}{32},\qquad \sqrt{M_{12}M_{21}}=\sqrt{\eta_2 C_{cross,1}\tilde{C}_{cross,2}}\le\frac{\alpha}{8}.$$
Insert both bounds into the spectral-radius estimate:
$$\rho(\mathbf{M})\le 1-\tfrac{\alpha}{2}+\tfrac{\alpha^2}{32}+\tfrac{\alpha}{8}\le 1-\tfrac{11\alpha}{32}=:\rho,$$
and $\alpha>1/2$ then gives $\rho<1-\alpha/4<7/8$.

All entries of $\mathbf{M}$ are non-negative, and $M_{12},M_{21}>0$ makes $\mathbf{M}$ irreducible; the Perron--Frobenius theorem then produces a left eigenvector $w=(\kappa_1,\kappa_2)^{\mathsf{T}}$ with $\kappa_1,\kappa_2>0$ and $w^{\mathsf{T}}\mathbf{M}=\rho(\mathbf{M})\,w^{\mathsf{T}}\le\rho\,w^{\mathsf{T}}$. Define $\mathcal{J}^{(k)}:=\kappa_1\bar{\mathcal{R}}_u^{(k)}+\kappa_2\bar{\mathcal{R}}_\lambda^{(k)}=w^{\mathsf{T}}\bigl(\bar{\mathcal{R}}_u^{(k)},\bar{\mathcal{R}}_\lambda^{(k)}\bigr)^{\mathsf{T}}$. Taking the inner product of the coupled system with $w$,
$$\mathcal{J}^{(k)}\le(w^{\mathsf{T}}\mathbf{M})\bigl(\bar{\mathcal{R}}_u^{(k-1)},\bar{\mathcal{R}}_\lambda^{(k-1)}\bigr)^{\mathsf{T}}+w^{\mathsf{T}}\boldsymbol{\zeta}\le\rho\,\mathcal{J}^{(k-1)}+\zeta,\qquad \zeta:=\kappa_1\zeta_u+\kappa_2(\zeta_\lambda+C_{cross,2}\zeta_u).$$
Iterating from $\mathcal{J}^{(0)}$,
\begin{equation*}
    \mathcal{J}^{(k)}\le\rho^{k}\mathcal{J}^{(0)}+\Big(\sum_{j=0}^{k-1}\rho^{j}\Big)\zeta=\rho^{k}\mathcal{J}^{(0)}+\frac{1-\rho^{k}}{1-\rho}\,\zeta,
\end{equation*}
To extract per-component asymptotics, observe that $\rho(\mathbf{M})<1$ guarantees $(I-\mathbf{M})$ is invertible with non-negative inverse, so the coupled system at $k\to\infty$ yields $\bar{\boldsymbol{\mathcal{R}}}^{(\infty)}\le(I-\mathbf{M})^{-1}\boldsymbol{\zeta}$ entrywise.

Under \eqref{eq:eta_restriction} together with $\tilde{C}_{cross,2}\ge C_{cross,2}/2$, we have $\eta_2 C_{cross,1}C_{cross,2}\le\alpha^2/32$, hence
$$1-M_{11},\;1-M_{22}\;\ge\;\tfrac{\alpha}{2}-\tfrac{\alpha^2}{32}\;\ge\;\tfrac{\alpha}{4},\qquad M_{12}M_{21}=\eta_2 C_{cross,1}\tilde{C}_{cross,2}\le\tfrac{\alpha^2}{64}.$$
Therefore
$$\det(I-\mathbf{M})=(1-M_{11})(1-M_{22})-M_{12}M_{21}\ge\tfrac{\alpha^2}{16}-\tfrac{\alpha^2}{64}=\tfrac{3\alpha^2}{64},$$
and the matrix inversion gives
\begin{align}
\bar{\mathcal{R}}_u^{(\infty)}&\le\tfrac{1}{\det(I-\mathbf{M})}\bigl[(1-M_{22})\zeta_u+M_{12}(\zeta_\lambda+C_{cross,2}\zeta_u)\bigr],\label{eq:Ru_infty_matrix}\\
\bar{\mathcal{R}}_\lambda^{(\infty)}&\le\tfrac{1}{\det(I-\mathbf{M})}\bigl[M_{21}\zeta_u+(1-M_{11})(\zeta_\lambda+C_{cross,2}\zeta_u)\bigr].\label{eq:Rlambda_infty_matrix}
\end{align}

Substitute $\zeta_u = \mathcal{O}(\delta_{noise}^2+\omega_{\max}^{-2(p-2)})$ and $\zeta_\lambda=\mathcal{O}(\omega_{\max}^4\delta_{noise}^2+\omega_{\max}^{-2(p-2)})$ into the numerator of \eqref{eq:Ru_infty_matrix}. The first term $(1-M_{22})\zeta_u$ is directly of order $\mathcal{O}(\delta_{noise}^2+\omega_{\max}^{-2(p-2)})$ since $1-M_{22}=\mathcal{O}(1)$. The second term $M_{12}\zeta_\lambda$ pairs an $\mathcal{O}(\omega_{\max}^{-4})$ factor from $M_{12}$ with the $\mathcal{O}(\omega_{\max}^4\delta_{noise}^2+\omega_{\max}^{-2(p-2)})$ bound for $\zeta_\lambda$, giving $\mathcal{O}(\delta_{noise}^2+\omega_{\max}^{-2p})$. The third term $M_{12}C_{cross,2}\zeta_u$ pairs $M_{12}C_{cross,2}=\mathcal{O}(1)$ with $\zeta_u$, giving $\mathcal{O}(\delta_{noise}^2+\omega_{\max}^{-2(p-2)})$. Hence
\begin{equation}\label{eq:Ru_infty_clean}
    \bar{\mathcal{R}}_u^{(\infty)}=\mathcal{O}\bigl(\delta_{noise}^2+\omega_{\max}^{-2(p-2)}\bigr).
\end{equation}

For \eqref{eq:Rlambda_infty_matrix}, $M_{21}\zeta_u$ pairs an $\mathcal{O}(\omega_{\max}^4)$ factor from $M_{21}$ with $\zeta_u$, giving
$$M_{21}\zeta_u=\mathcal{O}\bigl(\omega_{\max}^4\delta_{noise}^2+\omega_{\max}^{-2(p-4)}\bigr),$$
and $(1-M_{11})(\zeta_\lambda+C_{cross,2}\zeta_u)$ is of the same order. Hence
\begin{equation}\label{eq:Rlambda_infty_clean}
    \bar{\mathcal{R}}_\lambda^{(\infty)}=\mathcal{O}\bigl(\omega_{\max}^4\delta_{noise}^2+\omega_{\max}^{-2(p-4)}\bigr).
\end{equation}
\end{proof}

\section{Proof of Theorem \ref{thm:balance}}
\begin{proof}
Using the stability estimate of Theorem~\ref{thm:u} with $s=0$, we have
$$
\|u_k-u^\dagger\|_{L^2}^2\le C_{stab}^2\big(\bar{\mathcal{R}}_u^{(k)}+\delta_{noise}^2\big)+\mathcal{O}(\omega_{\max}^{-2p}).
$$
According to Theorem~\ref{thm:convergence}, as $k\to\infty$, $\bar{\mathcal{R}}_u^{(k)}=\mathcal{O}(\delta_{noise}^2+\omega_{\max}^{-2(p-2)})$, so in the limit the state error satisfies
$$
\|u_{\infty}-u^\dagger\|_{L^2}^2=\mathcal{O}\big(\delta_{noise}^2+\omega_{\max}^{-2(p-2)}\big),
$$
For the parameter field, Theorem~\ref{thm:lambda} bounds $\|\lambda_k-\lambda^\dagger\|_{L^2}^2$ by $\bar{\mathcal{R}}_\lambda^{(k)}+L_u\|u_k-u^\dagger\|_{H^2}^2$. Applying Theorem~\ref{thm:u} with $s=2$ to the state term and again letting $k\to\infty$ gives
$$
\|\lambda_{\infty}-\lambda^\dagger\|_{L^2}^2=\mathcal{O}\big(\omega_{\max}^4\,\delta_{noise}^2+\omega_{\max}^{-2(p-4)}\big).
$$
Denote $g(\omega_{\max})=\omega_{\max}^4\,\delta_{noise}^2+\omega_{\max}^{-2(p-4)}$. Solving $g'(\omega_{\max})=0$, we have that by selecting 
$$
\omega_{\max}^\star\sim\delta_{noise}^{-1/(p-2)},
$$
$\|\lambda_k-\lambda^\dagger\|_{L^2}^2$ and $\|u_k-u^\dagger\|_{L^2}$ achieve the optimal order (as $k\to\infty$):
$$
\|u_{\infty}-u^\dagger\|_{L^2}^2=\mathcal{O}(\delta_{noise}^2),
\qquad
\|\lambda_{\infty}-\lambda^\dagger\|_{L^2}^2=g(\omega_{\max}^\star)=\mathcal{O}\big(\delta_{noise}^{\,2(p-4)/(p-2)}\big).
$$
\end{proof}




\bibliographystyle{plain}
\bibliography{sample}

\end{document}